\providecommand{\U}[1]{\protect\rule{.1in}{.1in}}
\providecommand{\U}[1]{\protect\rule{.1in}{.1in}}
\providecommand{\U}[1]{\protect\rule{.1in}{.1in}}
\providecommand{\U}[1]{\protect\rule{.1in}{.1in}}
\providecommand{\U}[1]{\protect\rule{.1in}{.1in}}
\newcommand{\ulambda}{{\boldsymbol{\lambda}}}
\newcommand{\urho}{{\boldsymbol{\rho}}}
\newcommand{\utau}{{\boldsymbol{\tau}}}
\newtheorem{Th}{Theorem}[section]
\newtheorem{lemma}[Th]{Lemma}
\newtheorem{Cor}[Th]{Corollary}
\newtheorem{Prop}[Th]{Proposition}
\theoremstyle{definition}
\newtheorem{Rem}[Th]{Remark}{\rmfamily}
\theoremstyle{definition}
\newtheorem{Def}[Th]{Definition}{\rmfamily}
\newtheorem{Exa}[Th]{Example}{\rmfamily}
\newcommand{\Z}{\mathbb{Z}}
\newcommand{\C}{\mathbb{C}}
\newcommand{\Comp}{\operatorname{Comp}} 
\newcommand{\Mat}{\operatorname{Mat}}
\newcommand{\cH}{\mathcal{H}}
\newcommand{\mS}{\mathfrak{S}}
\newcommand{\tr}{\operatorname{tr}}
\newcommand{\FG}{{\rm{F}}\Gamma}
\newcommand{\FD}{{\rm{F}}\Delta}
\newcommand{\M}{{\rm M}}
\newcommand{\tT}{\widetilde{T}}
\newcommand{\tg}{\widetilde{g}}
\begin{document}
\title{An isomorphism Theorem for Yokonuma--Hecke algebras and applications to link invariants}
\author{ N. Jacon and L. Poulain d'Andecy}

\maketitle
\date{}

\begin{abstract}
 We develop several applications of the fact that  the Yokonuma--Hecke algebra of the general linear group GL is isomorphic to  a direct sum of matrix algebras associated to Iwahori--Hecke algebras of type $A$. This includes a description of the semisimple and modular representation theory of the Yokonuma--Hecke algebras of GL and a complete classification of all the Markov traces for them. Finally, from these Markov traces, we construct 3-variables polynomials which are invariants for framed and classical knots and links, and investigate their properties with the help of the isomorphism. 
\end{abstract}


\section{Introduction}

\paragraph{1.1.} The Yokonuma--Hecke algebras have been introduced by Yokonuma in \cite{Yo} as  centraliser algebras of  the permutation representation of  Chevalley groups  $G$ with respect to a maximal unipotent subgroup of $G$. They are  thus  particular cases of unipotent Hecke algebras and they admit a natural basis indexed by double cosets (see \cite{th2} for more details on general unipotent Hecke algebras). For the Yokonuma--Hecke algebras, the natural description has been transformed into a  simple presentation with generators and relations \cite{ju,juka}.
Assume that $q$ is a power of a prime number then, from this presentation, one can observe that the Yokonuma--Hecke algebra of $G=\text{GL}_n (\mathbb{F}_{q})$ (sometimes called the Yokonuma--Hecke algebra of type $A$)    is a deformation of the group algebra of the complex reflection group of type $G(d,1,n)$, where $d=q-1$. 

In this paper, we will consider the generic Yokonuma--Hecke algebras $Y_{d,n}$ ($n\in\Z_{\geq0}$) depending on two indeterminates $u$ and $v$ and a positive integer $d$, over the ring $\C[u^{\pm1},v]$. The algebra $Y_{d,n}$ is also a deformation of the group algebra of the complex reflection group of type $G(d,1,n)$, for any $d$, and the Yokonuma--Hecke algebra of $\text{GL}_n (\mathbb{F}_{q})$ is obtained by considering the specialization $u^2=q$, $v=q-1$ and the case $d=q-1$.

There exist others well-known deformations of complex reflection groups of type $G(d,1,n)$ which have been intensively studied during the last  past years  : the Ariki--Koike algebras. These algebras turn out to have a deep representation theory  (in both semisimple and modular cases) which is now quite well-understood (see for example \cite{GJ} for an overview). 

\vskip .1cm
Generalizations of Yokonuma--Hecke algebras associated to Chevalley groups appeared in the work of G. Lusztig on character sheaves \cite{lu}. In this work, it is proved that these algebras are isomorphic to certain direct sums of matrix algebras over classical Iwahori--Hecke algebras. For the Yokonuma--Hecke algebras $Y_{d,n}$ under consideration here, it turns out that the Iwahori--Hecke algebras appearing in the matrix algebras are all of type A. %

Coherently with this result, the set of simple modules for $Y_{d,n}$ has been explicitly described in the semisimple situation in combinatorial terms in \cite{CPA1} (see also \cite{th1} for general results on the semisimple representation theory of unipotent Hecke algebras), and a criterion of semi simplicity has been deduced. In addition, a certain  symmetrizing form has been defined and the associated Schur elements (which control a part of the representation theory of the algebra) have been calculated directly. They appear to have a particular simple form, namely products of Schur elements of Iwahori--Hecke algebras of type $A$. This suggests an interpretation of this symmetrizing form and its Schur elements with the help of the isomorphism. %

\vskip .1cm
In another way, a  motivation for studying  the Yokonuma--Hecke  algebra comes from topology and more precisely the theory of 
 knot and link invariants. Indeed, the algebra $Y_{d,n}$ is naturally a quotient of the framed braid group algebra, and in turn can be used to search for isotopy invariants for framed links in the same spirit as the Iwahori--Hecke algebra of type $A$ is used to obtain an invariant for classical links (the HOMFLYPT polynomial).
  
In \cite{ju2}, Juyumaya introduced on $Y_{d,n}$ an analogue of the Ocneanu trace of the Iwahori--Hecke algebra of type $A$. This trace was subsequently used by Juyumaya and Lambropoulou to produce isotopy invariants for framed links \cite{jula0,jula3}. Remarkably, they also produced  isotopy invariants for classical links and singular links \cite{jula1,jula2}. Even though the obtained invariants for classical links are different from the HOMFLYPT polynomial (excepted in some trivial cases), all the computed examples seem to indicate that the invariants for classical links obtained from $Y_{d,n}$ so far are topologically equivalent to the HOMFLYPT polynomial \cite{chla}. 
 In fact, if we restrict to  classical knots,  such an equivalence has been announced in \cite{vous}. 

Again, a natural strategy is to seek for an explanation of this fact starting from the isomorphism connecting the algebra $Y_{d,n}$ with Iwahori--Hecke algebras of type $A$. %
 
\paragraph{1.2.} In this paper, we give  several answers and results in both directions:
  the representation theory and the knots and links theory.  After recalling several results and detailing the structure of the algebras (in Section \ref{sec-def}), in the third section, we provide explicit formulas for an isomorphism, over the ring $\C[u^{\pm1},v]$, between the Yokonuma--Hecke algebra $Y_{d,n}$ and a direct sum of matrix algebras over tensor products of Iwahori--Hecke algebras of type $A$. 
The direct sum is naturally indexed by the set of compositions of $n$ with $d$ parts. This result is in fact a special case of a result by G. Lusztig \cite[\S 34]{lu}.
For the subsequent developments, it is convenient for us to start from the presentation of $Y_{d,n}$ adapted to framed links theory. Therefore, for later use and in order to be self-contained, we give full details. The explicit formulas for the isomorphism (and its inverse) will allow us to concretely translate questions and problems from one side to the other. %

\vskip .1cm
Then, we first develop in Section \ref{sec-rep} the applications of the isomorphism theorem concerning representation theory. Indeed, the isomorphism can be rephrased by saying that the Yokonuma--Hecke algebra $Y_{d,n}$ is Morita equivalent to a direct sum of tensor products of Iwahori--Hecke algebras of type $A$. As the result is valid in the generic situation (over the ring $\C[u^{\pm1},v]$), it passes to the specializations of the parameters $u$ and $v$. This implies that both the semisimple and the modular representation theories of $Y_{d,n}$ can be deduced from the corresponding ones of the Iwahori--Hecke algebra of type $A$, which are well-studied (see \emph{e.g.} \cite{GP}). In particular, the classification of simple modules of $Y_{d,n}$ and the decomposition matrices (in characteristic $0$)  follow.

In addition, the isomorphism theorem provides a natural symmetrizing form on $Y_{d,n}$ derived from the canonical symmetrizing form of the Iwahori--Hecke algebra of type $A$. As a first application of the explicit formulas, we show that this symmetrizing form actually coincides with the symmetrizing form defined in \cite{CPA1}, which provides a direct proof and an explanation of the form of the Schur elements.

\paragraph{1.3.} Another class of applications of the isomorphism theorem concerns the theory of  classical and framed knots and links (Sections \ref{sec-mark} and \ref{sec-inv}).  
Indeed, we obtain a complete classification of the Markov traces on the family, on $n$, of the Yokonuma--Hecke algebras $Y_{d,n}$ (Theorem \ref{theo-mark}). This is done in two steps. First we translate, with the help of the isomorphism theorem, the Markov trace properties into properties of traces on tensor products of Iwahori--Hecke algebras of type A; then we fully characterize these traces using the known uniqueness of the Markov trace on the Iwahori--Hecke algebras of type A. In particular, we show that all the Markov traces on $Y_{d,n}$ are related with the unique Markov trace on the Iwahori--Hecke algebras of type $A$. 

\vskip .1cm
We note that we use a different definition of a Markov trace on $Y_{d,n}$ than in \cite{ju2,jula0,jula1,jula2,jula3}. In there, the standard approach initiated by Jones  for classical links was followed (see \cite{jo} and references therein). The first step is the construction on $Y_{d,n}$ of an analogue of the Ocneanu trace by Juyumaya \cite{ju2}. Additional conditions were imposed in \cite{ju2} in order to obtain the existence and unicity of this trace. Then, a rescaling procedure is necessary to construct invariants and, as it turned out, the trace does not rescale directly as in the classical case. A non-trivial rescaling procedure was implemented by Juyumaya and Lambropoulou in \cite{jula1,jula2,jula3} by means of the so-called "E-system" and led to further restrictions on the parameters.

In the definition we use here for the Markov trace on $Y_{d,n}$, the imposed conditions are the minimal ones allowing to obtain link invariants, namely, the centrality and the so-called Markov condition (see Subsection \ref{subsec-Mark}). This will allow us to avoid any kind of rescaling procedure during the construction of invariants. This approach is explained in \cite[\S 4.5]{GP} in the classical setting of the Iwahori--Hecke algebras of type A.
\vskip .1cm

With the definition used here, the space of Markov traces has a structure of $\mathbb{C}[u^{\pm 1},v^{\pm 1}]$-module. Our approach via the isomorphism theorem provides a distinguished basis, which is indexed by the set of compositions into $d$ parts with all parts equal to $0$ or $1$. Thus the space of Markov traces on the Yokonuma--Hecke algebras has dimension $2^d-1$.

\paragraph{1.4.} Finally, the last section is devoted to the construction and the study of invariants for classical links and framed links. Following Juyumaya and Lambropoulou, we realize $Y_{d,n}$ as a quotient of the framed braid group. Actually we do more: we introduce a one-parameter family of homomorphisms from the group algebra of the framed braid group to the algebra $Y_{d,n}$ (deforming the canonical homomorphism).
Then, for each homomorphism of this family and each Markov trace on $Y_{d,n}$, we construct an invariant for classical and framed links. As mentioned above, no rescaling is needed and the invariant is simply obtained by the composition of the homomorphism followed by the Markov trace.

The obtained invariants are Laurent polynomials in three variables: two of these variables are the parameters $u$ and $v$ in the definition of $Y_{d,n}$, and the third one is the parameter appearing in the homomorphism from the group algebra of the framed braid group to $Y_{d,n}$. Among these invariants, we recover the ones obtained by Juyumaya and Lambropoulou by taking a particular value for this third parameter and some specific Markov traces.

\vskip .1cm
Restricting to classical links, we use our construction via the isomorphism theorem to prove two results devoted to the comparison of the obtained invariants with the HOMFLYPT polynomial.

First we show that the HOMFLYPT polynomial is contained in them. More precisely, among the $2^d-1$ basic Markov traces, there are $d$ of them whose associated invariants coincide with the HOMFLYPT polynomial. These basic Markov traces are the ones indexed by compositions into $d$ parts with only one part equal to 1 and all the others equal to 0 (in the particular case of the Juyumaya--Lambropoulou invariants, this result corrresponds to \cite[Corollary 1]{chla}).

Then, we show that the invariants obtained from the others basic Markov traces are always equal to $0$ when applied to a classical knot. For classical knots, this solves completely the study of these invariants, which are thus shown to be topologically equivalent to the HOMFLYPT polynomial. This gives, in particular, a different proof of  results of  \cite{vous} about the  Juyumaya--Lambropoulou invariants.

\paragraph{Notations.}\begin{itemize}
\item We fix an integer $d\geq1$, and we let $\{\xi_1,\ldots ,\xi_d  \}$ be the set of roots of unity of order $d$. We will often use without mentioning that $\frac{1}{d}\sum_{0\leq s \leq d-1}\xi_a^s\xi_b^{-s}$ is equal to $1$ if $a=b$ and is equal to $0$ otherwise.
\item Let $\mathcal{A}$ be an algebra defined over a commutative ring $R$. If $R'$ is a commutative ring with a given ring homomorphism $\theta\,:\,R\to R'$, we will denote the specialized algebra $R'_{\theta}\mathcal{A}:=R'\otimes_R\mathcal{A}$ where the tensor product is defined via $\theta$.
In particular, if $R'$ is a commutative ring containing $R$ as a subring, we denote simply by $R'\mathcal{A}:=R'\otimes_R\mathcal{A}$ the algebra with ground ring extended to $R'$.
\item We will denote by $\M_{i,j}$ an elementary matrix with 1 in position $(i,j)$ and $0$ everywhere else (the size of the matrix will always be given by the context).
\end{itemize}

\paragraph{Acknowledgements.}
 The authors thank C\'edric Bonnaf\'e, Sofia Lambropoulou and Ivan Marin for useful discussions and comments. The authors are grateful to George Lusztig for indicating the reference \cite{lu}.
 The first author is supported by 
 Agence National de la Recherche Projet ACORT ANR-12-JS01-0003.

\section{Definitions and first properties}\label{sec-def}

\subsection{The Iwahori--Hecke algebra of type $A$}\label{hA}
Let $n\in \Z_{\geq 1}$ and let $u$ and $v$ be indeterminates. The {\it Iwahori--Hecke algebra} $\mathcal{H}_{n}$ of type $A_{n-1}$ is the associative  $\C[u^{\pm 1},v]$-algebra (with unit) with a presentation by generators :
$$T_1,\ldots, T_{n-1},$$
and relations :
\begin{equation}\label{def-H}
\begin{array}{rclcl}
T_iT_j & = & T_jT_i && \mbox{for all $i,j=1,\ldots,n-1$ such that $\vert i-j\vert > 1$,}\\[0.1em]
T_iT_{i+1}T_i & = & T_{i+1}T_iT_{i+1} && \mbox{for  all $i=1,\ldots,n-2$,}\\[0.1em]
T_i^2&=&u^2+v T_i &&\mbox{for  all $i=1,\ldots,n-1$.}\\[0.1em]
\end{array}
\end{equation}
Note that $\cH_1=\C[u^{\pm 1},v]$. It is convenient also to set $\cH_0:=\C[u^{\pm 1},v]$.

Let $R$ be a ring and let $\theta : \C[u^{\pm 1},v] \to R$ be a specialization such that $\theta (u^2)= 1$ and $\theta (v)=0$ then the specialized algebra $R_{\theta}\mathcal{H}_{n}$
 is naturally isomorphic to the group algebra $R\mathfrak{S}_n$ of the symmetric group.
 
 \begin{Rem}\label{RemA}  Let $q$ be an indeterminate. Another usual presentation of $\mathcal{H}_{n}$ is obtained by the specialization $\theta : \C[u^{\pm 1},v] \to \mathbb{C}[q,q^{-1}]$ given by $\theta (u^2)=1$ and $\theta (v)=q-q^{-1}$. \hfill$\triangle$
 \end{Rem}
 
 Let $w\in \mathfrak{S}_n$ and $s_{i_1} \ldots s_{i_r}$ a reduced expression of $w$ (where $(i_1,\ldots,i_r )\in \{1,\ldots,n-1\}^r$ and  $s_i\in \mathfrak{S}_n$ denotes the transposition $(i,i+1)$ for $i=1,\ldots,n-1$). Then, by Matsumoto's lemma (see \cite[\S 1.2]{GP}), the element $T_{i_1}\ldots T_{i_r}$ does not depend of the choice of the reduced expression of $w$ and thus, the element $T_w := T_{i_1}\ldots T_{i_r}$ is well-defined. Then $\mathcal{H}_{n}$  is free as a $\C[u^{\pm 1},v]$-module with basis $\{T_w \ |\ w\in \mathfrak{S}_n\}$ (see \cite[Thm. 4.4.6]{GP}). In particular it has dimension $n!$. 

We also set  $\tT_i:=u^{-1}T_i$, for $i\in\{1,\dots,n-1\}$ and,
\begin{equation}\label{def-tT}
\tT_w:=u^{-\ell(w)}T_w=\tT_{i_1}\ldots \tT_{i_r}\ ,\ \ \ \ \ \ \ \text{for $w\in\mS_n$\ .}
\end{equation}
where $\ell(w)$ is the length of $w$. The set $\{\tT_w \ |\ w\in \mathfrak{S}_n\}$ is also a $\C[u^{\pm 1},v]$-basis of $\mathcal{H}_{n}$.

\subsection{Compositions of $n$}

Let $\operatorname{Comp}_d (n)$ be the set of {\it compositions} of $n$ with $d$ parts (or $d$-compositions of $n$), that is the set of  $d$-tuples of non negative integers $\mu=(\mu_1,\ldots,\mu_d)$ such that $\sum_{1\leq a\leq d} \mu_a =n$. The set of $d$-compositions of $n$ is denoted by $\operatorname{Comp}_d(n)$. We denote by $|\mu|:=n$ the size of the  composition $\mu$.

\vskip .1cm
For $\mu \in \operatorname{Comp}_d(n)$, the Young subgroup $\mathfrak{S}^{\mu}$ is the subgroup $\mS_{\mu_1}\times\dots\times\mS_{\mu_d}$ of  $\mathfrak{S}_{n}$, where $\mS_{\mu_1}$ acts on the letters $\{1,\dots,\mu_1\}$, $\mS_{\mu_2}$ acts on the letters $\{\mu_1+1,\dots,\mu_2\}$, and so on. The subgroup $\mathfrak{S}^{\mu}$ is generated by the transposition $(i,i+1)$ with $i\in I_{\mu}:=\{1,\ldots,n-1\}\setminus \{\mu_1,\mu_1+\mu_2,\ldots, \mu_{1}+\ldots +\mu_{d-1}\}$. 

\vskip .1cm
Similarly, we have an associated subalgebra $\mathcal{H}^{\mu}$ of  $\mathcal{H}_{n}$ generated by $\{T_i \ |\ i\in I_{\mu}\}$. This is a free   $\C[u^{\pm 1},v]$-module with basis $\{T_w \ |\ w\in \mathfrak{S}^{\mu}\}$ (or $\{\tT_w \ |\ w\in \mathfrak{S}^{\mu}\}$). The algebra $\cH^{\mu}$ is naturally isomorphic to $\mathcal{H}_{\mu_1} \otimes \ldots \otimes \mathcal{H}_{\mu_r}$, where the tensor products are over $\C[u^{\pm 1},v]$. Note that the defining relations of $\mathcal{H}^{\mu}$ in terms of the generators $T_i$ with  $i\in I_{\mu}$, are the relations from (\ref{def-H}) involving only those generators.

\vskip .1cm
Let $\mu\in\Comp_d(n)$. For $a\in\{1,\dots,d\}$, we denote by $\mu^{[a]}$ the composition in $\Comp_d(n+1)$ defined by
\begin{equation}\label{def-mu-a+}
\mu^{[a]}_b:=\mu_b\ \ \text{if $b\neq a$,}\quad\ \ \ \ \text{and}\ \ \ \ \quad\mu^{[a]}_a:=\mu_a+1\ .
\end{equation} 
Conversely, if $\mu_a\geq1$, we define $\mu_{[a]}\in\Comp_d(n-1)$ to be the unique composition such  that 
\begin{equation}\label{def-mu-a-}
(\mu_{[a]})^{[a]}=\mu\ .
\end{equation}
We also define the \emph{base} of $\mu$, denoted by $[\mu]$, to be the $d$-composition defined by
 \begin{equation}\label{def-base-mu}
[\mu]_a=\left\{\begin{array}{ll}
1 & \text{if $\mu_a\geq1$,}\\[0.2em]
0 &  \text{if $\mu_a=0$,}
\end{array}\right.\ \ \ \ \ \ \text{for $a=1,\dots,d$.}
\end{equation}
The composition $[\mu]$ belongs to $\Comp_d(N)$ where $N$ is the number of non-zero parts in $\mu$. We denote by $\Comp^0_d(n)$ the set of $d$-compositions of $n$ where all the parts belong to $\{0,1\}$, and we set
\begin{equation}\label{comp0}\Comp_d^0:=\bigcup_{n\geq1}\Comp_d^0(n)=\{[\mu]\ |\ \mu\in\bigcup_{n\geq1}\Comp_d(n)\}\ .
\end{equation}

\subsection{The Yokonuma--Hecke algebra} We define the Yokonuma--Hecke algebra $Y_{d,n}$ as the associative  $\C[u^{\pm 1},v]$-algebra (with unit) with a presentation by generators:
$$g_1,g_2,\ldots,g_{n-1}, t_1,\ldots, t_n,$$
and relations:
\begin{equation}\label{rel-def-Y}\begin{array}{rclcl}
g_ig_j & = & g_jg_i && \mbox{for all $i,j=1,\ldots,n-1$ such that $\vert i-j\vert > 1$,}\\[0.1em]
g_ig_{i+1}g_i & = & g_{i+1}g_ig_{i+1} && \mbox{for  all $i=1,\ldots,n-2$,}\\[0.1em]
t_it_j & =  & t_jt_i &&  \mbox{for all $i,j=1,\ldots,n$,}\\[0.1em]
g_it_j & = & t_{s_i(j)}g_i && \mbox{for all $i=1,\ldots,n-1$ and $j=1,\ldots,n$,}\\[0.1em]
t_j^d   & =  &  1 && \mbox{for all $j=1,\ldots,n$,}\\[0.2em]
g_i^2  & = & u^2 + v  e_{i}  g_i && \mbox{for  all $i=1,\ldots,n-1$,}
\end{array}
\end{equation}
where, for all $i=1,\ldots,n-1$,
$$e_i :=\frac{1}{d}\sum\limits_{0\leq s\leq d-1}t_i^s t_{i+1}^{-s}\ .$$

Note that the elements $e_i$ are idempotents and that we have $g_ie_i=e_ig_i$ for all $i=1,\dots,n-1$.
The elements $g_i$ are invertible, with
\begin{equation}
g_i^{-1} = u^{-2} g_i - u^{-2} v  e_i\,,  \qquad \mbox{for all $i=1,\ldots,n-1$}.
\end{equation}
We also set
$$\tg_i:=u^{-1}g_i\ ,\ \ \ \ \ \ \ \ \text{for $i\in\{1,\dots,n-1\}$.}$$
We note that $\tg_i^2=1+u^{-1}ve_i\tg_i\,$ and also that $\tg_i^{-1}=\tg_i - u^{-1}v  e_i$, for $i=1,\dots,n-1$.

\vskip .1cm
Let $R$ be a ring  and let $\theta : \C[u^{\pm 1},v] \to R$ be a specialization such that $\theta (u^2)= 1$  and $\theta (v)=0$ 
 then the specialized algebra $R_{\theta} Y_{d,n}$ is naturally isomorphic to the group algebra $RG(d,1,n)$ of the complex reflection group $G(d,1,n)\cong (\mathbb{Z}/d\mathbb{Z})\wr \mathfrak{S}_n\,$. Note that in the case where $d=1$ then $Y_{1,n}$ is nothing but the Iwahori--Hecke algebra of type $A_{n-1}$ as defined in Subsection \ref{hA}. 

 \begin{Rem}\label{remY} The presentation used in \cite{CPA1} of the Yokonuma--Hecke algebra is obtained, similarly to Remark \ref{RemA}, by a specialization $\theta : \C[u^{\pm 1},v] \to \mathbb{C}[q,q^{-1}]$ such that $q$ is an indeterminate, $\theta (u^2)=1$ and $\theta (v)=q-q^{-1}$. The precise connections between the presentation above and the presentation used in \cite{chla,ju2,jula0,jula1,jula2,jula3} will be carefully investigated in Section \ref{sec-inv} (see also \cite[Remark 1]{CPA1}).
 \hfill$\triangle$
 \end{Rem}

We set, for $w\in \mathfrak{S}_n$ and $s_{i_1} \ldots s_{i_r}$ a reduced expression for $w$,
\begin{equation}\label{def-g-tg}
g_w:=g_{i_1}\ldots g_{i_r}\ \ \ \ \ \ \ \text{and}\ \ \ \ \ \ \ \tg_w:=u^{-\ell(w)}g_w=\tg_{i_1}\ldots\tg_{i_r}\ .
\end{equation}
Again, by Matsumoto's lemma (see \cite[\S 1.2]{GP}), the elements $g_w$ and $\tg_w$ are well-defined. The following multiplication rules in  $Y_{d,n}$ follow directly from the definitions. For $w\in \mathfrak{S}_n$ and $i\in \{1,\ldots,n-1\}$, we have
\begin{equation}\label{mult-r}\tg_{w}\tg_i =\left\{ \begin{array}{lcl}
  \tg_{ws_i} & \text{if} & l(ws_i)>l(w),\\[0.2em]
  \tg_{ws_i} +u^{-1}v \tg_w e_i & \text{if} & l(ws_i)<l(w);\end{array}\right.
 \end{equation}
\begin{equation}\label{mult-l}\tg_i \tg_{w} =\left\{ \begin{array}{lcl}
  \tg_{s_iw} & \text{if} & l(s_iw)>l(w),\\[0.2em]
 \tg_{s_i w} +u^{-1}ve_i \tg_w & \text{if} & l(s_i w)<l(w).\end{array}\right.
  \end{equation}
By \cite{ju2} and Remark \ref{remY}, $Y_{d,n}$  is a free $\C[u^{\pm1},v]$-module with basis 
 \begin{equation}\label{ju-basis}
 \{ t_1^{k_1} \ldots t_n^{k_n} \tg_w \ |\ w\in \mathfrak{S}_n,\ k_1,\ldots, k_n \in \mathbb{Z}/d\mathbb{Z}\}
 \end{equation}
 and the rank of  $Y_{d,n}$ is $d^n n!$. The algebra $Y_{d,n-1} $ naturally embeds in the algebra $Y_{d,n}$ in an obvious way. 

 \begin{Rem}\label{rem-u-v} For the isomorphism theorem, we will mainly use the elements $\tg_i$ and $\tg_w$ instead of $g_i$ and $g_w$. Concerning this part, we could have given the presentation of $Y_{d,n}$ in terms of the generators $\tg_i$ and thus remove one of the variables $u$ or $v$. However, the generators $g_i$ will be used systematically starting from Section \ref{sec-mark} for applications to links theory.
 \hfill$\triangle$
 \end{Rem}

 \subsection{A decomposition of $Y_{d,n}$ }
 We consider the commutative subalgebra $\mathcal{A}_n:=\langle t_1,\ldots, t_n\rangle$ of $Y_{d,n} $.
 This algebra  is naturally isomorphic to the group algebra of $(\mathbb{Z}/d\mathbb{Z})^n$ over $\C[u^{\pm1},v]$, and we will always implicitly make this identification in the following.
 
A complex character $\chi$ of the group $(\mathbb{Z}/d\mathbb{Z})^n$ is characterized by the choice of $\chi (t_j)\in \{\xi_1,\ldots ,\xi_d  \}$ for each $j=1,\ldots,n$. We denote by $\operatorname{Irr}(\mathcal{A}_n)$ the set of complex characters of $(\mathbb{Z}/d\mathbb{Z})^n$, extended to $\mathcal{A}_n$.
 \begin{Def} For each $\chi \in \operatorname{Irr} \left(\mathcal{A}_n\right)$, we denote by $E_{\chi}$ the primitive idempotent of 
 $\mathcal{A}_n$ associated to 
$\chi$, that is, characterized by $\chi'(E_{\chi})=0$ if $\chi'\neq \chi$ and $\chi(E_{\chi})=1$. 
\end{Def}
\noindent The idempotent $E_{\chi}$ is explicitly written in terms of the generators as follows:
\begin{equation}\label{E-chi}
E_{\chi} =\prod_{1\leq i\leq n}\left(\frac{1}{d}\sum_{0\leq s \leq d-1}\chi(t_i)^st_i^{-s}\right)\ .
\end{equation}
By definition, we have, for all  $\chi \in \operatorname{Irr} (\mathcal{A}_n)$ and $i=1,\ldots, n$,
\begin{equation}\label{E-chii}
t_i E_{\chi}=E_{\chi} t_i =\chi (t_i) E_{\chi}\,.
\end{equation}
The symmetric group  $\mathfrak{S}_n$ acts by permutations on $(\mathbb{Z}/d\mathbb{Z})^n$ and in turn acts on $\operatorname{Irr} (\mathcal{A}_n)$. The action is given by the formula:
 $$w(\chi)\bigl(t_i\bigr)=\chi (t_{w^{-1} (i)})\,,\ \ \ \ \ \ \ \ \text{for all $i=1,\ldots,n$,  $w \in \mathfrak{S}_n$ and $\chi\in\operatorname{Irr} (\mathcal{A}_n)$.}$$
In the algebra $Y_{d,n}$, due to the relation $g_wt_i=t_{w(i)}g_w$ for $i=1,\dots,n$ and $w\in \mathfrak{S}_n$, we have
\begin{equation}\label{rel-g-E}
g_w E_{\chi}=E_{w(\chi)} g_{w}\ \ \ \ \ \quad\text{and}\quad\ \ \ \ \ \tg_w E_{\chi}=E_{w(\chi)} \tg_{w}\ .
\end{equation}

\vskip .1cm
Let $\chi\in\operatorname{Irr} (\mathcal{A}_n)$. For $a=1,\ldots,d$, denote by $\mu_a$ the cardinal of the set $\{ j\in \{1,\ldots,n\} \ |\ \chi (t_j)=\xi_a\}$. Then the sequence $(\mu_1,\ldots,\mu_d)$ is a $d$-composition of $n$ which is denoted by 
$$\operatorname{Comp}(\chi):=(\mu_1,\ldots,\mu_d)\in\Comp_d(n)\ .$$ 
Let $\mu\in \operatorname{Comp}_d (n)$. Then we denote by 
$$\mathcal{O} (\mu):=\{ \chi \in  \operatorname{Irr} (\mathcal{A}_n)\ |\ \operatorname{Comp} (\chi)=\mu\}$$
the orbit of the element $\chi \in  \operatorname{Irr} (\mathcal{A}_n)$ under the action of the symmetric group and 
$$m_{\mu}:=\sharp \mathcal{O}_{\mu}=\displaystyle \frac{n!}{\mu_1!\mu_2!\dots\mu_d!}\ .$$

\begin{Def}
Let $\mu\in \operatorname{Comp}_d (n)$. We set
$$E_{\mu}:=\sum_{\operatorname{Comp}(\chi)=\mu} E_{\chi}=  \sum_{\chi \in \mathcal{O} (\mu)} E_{\chi}\ .$$
\end{Def}
Due to the commutation relation (\ref{rel-g-E}), the elements $E_{\mu}$, with $\mu\in \operatorname{Comp}_d (n)$,  are central in $Y_{d,n}$. Moreover, as the set $\{E_{\chi}\ |\ \chi\in\operatorname{Irr} (\mathcal{A}_n)\}$ is a complete set of orthogonal idempotents, it follows at once that the set $\{ E_{\mu} \ |\ \mu\in \operatorname{Comp}_d (n)\}$ forms a complete set of central orthogonal idempotents in $Y_{d,n}$. In particular, we have the following decomposition of $Y_{d,n}$ into a direct sum of two-sided ideals:
\begin{equation}\label{Emu-Y}
Y_{d,n} =\bigoplus_{\mu \in \operatorname{Comp}_d (n)} E_{\mu} Y_{d,n}\ .
\end{equation}

 \subsection{Another basis for $Y_{d,n} $ }\label{subsec-basis}
 We here give another basis for $Y_{d,n}$ using the idempotents we just defined. As the subalgebra $\mathcal{A}_n$ of $Y_{d,n}$ is isomorphic to the group algebra of $(\mathbb{Z}/d\mathbb{Z})^n$ over $\C[u^{\pm1},v]$,   the set $\{ E_{\chi}  \ |\ \chi \in  \operatorname{Irr} (\mathcal{A}_n)\}$ is a $\C[u^{\pm1},v]$-basis of $\mathcal{A}_n$, as well as the set $\{ t_1^{k_1} \ldots t_n^{k_n} \ |\ k_1,\ldots, k_n \in \mathbb{Z}/d\mathbb{Z}\}$. So from the knowledge of the $\C[u^{\pm1},v]$-basis (\ref{ju-basis}) of $Y_{d,n}$, we also have that the set
\begin{equation}\label{E-basis}
\{ E_{\chi} \tg_w \ |\ \chi \in  \operatorname{Irr} (\mathcal{A}_n),\ w\in \mathfrak{S}_n\}
\end{equation}
 is a $\C[u^{\pm1},v]$-basis of $Y_{d,n}$. Moreover, this basis is compatible with the decomposition (\ref{Emu-Y}) of $Y_{d,n}$ since, for $\mu \in \operatorname{Comp}_d (n)$, we have $E_{\chi} \tg_w\in E_{\mu} Y_{d,n}$ if and only if $\operatorname{Comp}(\chi)=\mu$. In other words, the set
 \[\{ E_{\chi} \tg_w \ |\ \chi \in  \operatorname{Irr} (\mathcal{A}_n)\ \text{with $\operatorname{Comp}(\chi)=\mu$},\ \ w\in \mathfrak{S}_n\}\]
 is a $\C[u^{\pm1},v]$-basis of $E_{\mu}Y_{d,n}$.
 
 \vskip .1cm
Now we will label the elements of  $\operatorname{Irr} (\mathcal{A}_n)$ in a useful way for the following. This is done as follows. We first consider a distinguished element in each orbit $\mathcal{O} (\mu)$. Let $\mu\in\Comp_d(n)$. We denote
$$\chi_1^{\mu}\in \operatorname{Irr} (\mathcal{A}_n)\ ,$$
the character given by
\begin{equation}\label{chi1-mu}
\left\{\begin{array}{ccccccc}
\chi_1^{\mu} (t_1)&=&\ldots & =& \chi_1^{\mu} (t_{\mu_1})&=& \xi_1\ ,\\[0.2em]
\chi_1^{\mu} (t_{\mu_1+1})&=&\ldots & =& \chi_1^{\mu} (t_{\mu_1+\mu_2})&=& \xi_2\ ,\\
\vdots &\vdots &\vdots &\vdots &\vdots &\vdots &\vdots  \\
\chi_1^{\mu} (t_{\mu_1+\dots+\mu_{d-1}+1})&=&\ldots & =& \chi_1^{\mu} (t_{\mu_d})&=& \xi_d\ .\\
\end{array}\right.
\end{equation}
Note that the stabilizer of $\chi_1^{\mu}$ under the action of $\mathfrak{S}_n$ is the Young subgroup $\mathfrak{S}^{\mu}$. In each left coset in $\mathfrak{S}_n/\mathfrak{S}^{\mu}$, we take a representative of minimal length (such a representative is unique, see \cite[\S 2.1]{GP}). We denote by 
$$\{\pi_{1,{\mu}},\ldots,\pi_{m_{\mu},{\mu}}\}$$
this set of distinguished left coset representatives of $\mathfrak{S}_n/\mathfrak{S}^{\mu}$ with the convention that $\pi_{1,{\mu}}=1$ (recall that $m_{\mu}:=\sharp \mathcal{O} (\mu)$). Then, if we set for all 
 $k=1,\ldots, m_{\mu}$:
 \begin{equation}\label{defc}
 \chi_k^{\mu}:=\pi_{k,{\mu}} (\chi_1^{\mu}),
 \end{equation}
 we have by construction that
$$\mathcal{O} (\mu)=\{\chi_1^{\mu}, \ldots,\chi_{m_{\mu}}^{\mu}\}\ .$$

To sum up, we have the following $\C[u^{\pm1},v]$-basis of $Y_{d,n}$:
\begin{equation}\label{Ekmu-basis}
\{ E_{\chi^\mu_{k}}\tg_w\ |\ w\in \mathfrak{S}_n,\ k=1,\ldots, m_{\mu},\ \mu\in\Comp_d(n)\}\ ,
\end{equation}
where, for each $\mu\in\Comp_d(n)$, the subset $\{ E_{\chi^\mu_{k}}\tg_w\ |\ w\in \mathfrak{S}_n,\ k=1,\ldots, m_{\mu}\}$ is a $\C[u^{\pm1},v]$-basis of the two-sided ideal $E_{\mu}Y_{d,n}$.

 \section{The isomorphism theorem}\label{sec-iso}
 
The aim of this part is to study an isomorphism between $Y_{d,n}$ and $\bigoplus_{\mu \in \operatorname{Comp}_d (n)} \operatorname{Mat}_{m_{\mu}} (\mathcal{H}^{\mu}) $, and then to analyse the inclusion $Y_{d,n-1}\subset Y_{d,n}$ from this point of view. Theorem \ref{main} below is in fact a special case of a result by G. Lusztig (see \cite[\S 34]{lu}). Nevertheless we give full details of the proof as the explicit formulas will then be used throughout the rest of the paper.  %
 
 \subsection{The statement}
 Let $\mu\in \operatorname{Comp}_d (n)$. We recall that $E_{\mu} Y_{d,n}$ is a two-sided ideal of $Y_{d,n}$ and is also a unital subalgebra with unit $E_{\mu}$. We define a linear map 
 $$\Phi_{\mu}: \operatorname{Mat}_{m_{\mu}} (\mathcal{H}^{\mu}) \to E_{\mu} Y_{d,n}\ ,$$
by setting, for any matrix consisting of basis elements $\tT_{w_{i,j}}$ of $\cH^{\mu}$ (that is, with $w_{i,j}\in\mS^{\mu}$),
\begin{equation}\label{def-phi}
\Phi_{\mu}\bigl((\tT_{w_{i,j}})_{1\leq i,j\leq m_{\mu}}\bigr)=\sum_{1\leq i,j\leq m_{\mu}}   E_{\chi^{\mu}_i} \,\tg_{\pi_{i,\mu} w_{i,j} \pi_{j,\mu}^{-1}}\, E_{\chi^{\mu}_j}\ .
\end{equation}
We also define a linear map 
 $$\Psi_{\mu}: E_{\mu} Y_{d,n}  \to \operatorname{Mat}_{m_{\mu}} (\mathcal{H}^{\mu} ) \ ,$$
as follows. Let $k\in\{1,\dots,m_{\mu}\}$ and $w\in\mS_n$, and let $j\in\{1,\dots,m_{\mu}\}$ be uniquely defined (given $k$) by the relation $w(\chi^{\mu}_j)=\chi^{\mu}_k$. Note that we thus have ${\pi_{k,{\mu}}^{-1} w \pi_{j,{\mu}}}\in \mathfrak{S}^{\mu}$. We then set
\begin{equation}\label{def-psi}
\Psi_{\mu} (E_{\chi^{\mu}_k} \tg_w)=\tT_{\pi_{k,{\mu}}^{-1} w \pi_{j,{\mu}}}\,\M_{k,j}\ ,
\end{equation}
where we recall that $\M_{k,j}$ denotes the elementary matrix with 1 in position $(k,j)$.

\vskip .1cm
Now we can state the isomorphism theorem. Recall the decomposition (\ref{Emu-Y}) of $Y_{d,n}$. %
 \begin{Th}\label{main}  Let $\mu\in \operatorname{Comp}_d (n)$. The linear map 
$\Phi_{\mu}$ is an isomorphism of $\C[u^{\pm1},v]$-algebra with inverse map $\Psi_{\mu}$. In turn,
$$\Phi_n:=\bigoplus_{\mu \in \operatorname{Comp}_d (n) } \Phi_{\mu}\ :  \bigoplus_{\mu \in \operatorname{Comp}_d (n) } \operatorname{Mat}_{m_{\mu}} (\mathcal{H}^{\mu} )  \to Y_{d,n} $$
is an isomorphism with inverse map:
$$\Psi_n:=\bigoplus_{\mu \in \operatorname{Comp}_d (n) } \Psi_{\mu}\ :\ Y_{d,n} \to \bigoplus_{\mu \in \operatorname{Comp}_d (n) }  \operatorname{Mat}_{m_{\mu}}   (\mathcal{H}^{\mu}).   $$
 \end{Th}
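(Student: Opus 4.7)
The strategy is to verify the local isomorphism statement for each $\Phi_\mu$ and then sum over $\mu\in\Comp_d(n)$ using the decomposition~(\ref{Emu-Y}). For a fixed $\mu\in\Comp_d(n)$ the plan splits into two steps: (i) show that $\Phi_\mu$ and $\Psi_\mu$ are mutually inverse $\C[u^{\pm1},v]$-linear bijections, and (ii) show that $\Phi_\mu$ is multiplicative.

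For (i), both modules are free of the same rank $m_\mu\cdot n!$, with natural bases $\{\tT_v\M_{i,j}\mid v\in\mathfrak{S}^\mu,\ 1\leq i,j\leq m_\mu\}$ on the matrix side and~(\ref{Ekmu-basis}) on the Yokonuma side. Using~(\ref{rel-g-E}) and the fact that elements of $\mathfrak{S}^\mu$ stabilize $\chi^\mu_1$, one computes $\pi_{i,\mu}v\pi_{j,\mu}^{-1}(\chi^\mu_j)=\chi^\mu_i$, which simplifies the defining formula to
\[
\Phi_\mu(\tT_v\M_{i,j})=E_{\chi^\mu_i}\tg_{\pi_{i,\mu}v\pi_{j,\mu}^{-1}}.
\]
Conversely, for every basis element $E_{\chi^\mu_k}\tg_w$ of $E_\mu Y_{d,n}$, if $j$ is chosen so that $w(\chi^\mu_j)=\chi^\mu_k$, then $\pi_{k,\mu}^{-1}w\pi_{j,\mu}\in\mathfrak{S}^\mu$ and $w=\pi_{k,\mu}(\pi_{k,\mu}^{-1}w\pi_{j,\mu})\pi_{j,\mu}^{-1}$, which matches the definition of $\Psi_\mu$. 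Thus $\Phi_\mu$ and $\Psi_\mu$ interchange the two bases and are mutually inverse.

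Step (ii) is the main obstacle. By linearity it suffices to check
\[
\Phi_\mu(\tT_v\M_{i,j})\,\Phi_\mu(\tT_{v'}\M_{k,l})=\delta_{j,k}\,\Phi_\mu(\tT_v\tT_{v'}\M_{i,l})\qquad\text{for all }v,v'\in\mathfrak{S}^\mu.
\]
Expanding the left-hand side via~(\ref{rel-g-E}) introduces the factor $E_{\chi^\mu_i}E_{\pi_{i,\mu}v\pi_{j,\mu}^{-1}(\chi^\mu_k)}$, and the stabilizer identity from (i) forces this to vanish unless $k=j$, disposing of the off-diagonal case. For $k=j$, I would proceed by induction on $\ell(v')$ in $\mathfrak{S}^\mu$. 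The inductive step writes $v'=v''s_i$ with $i\in I_\mu$ and uses the multiplication rule~(\ref{mult-r}) in $Y_{d,n}$. The crucial input is that for $i\in I_\mu$ the labels $i$ and $i+1$ lie in the same block of $\mu$, so $\chi^\mu_1(t_i)=\chi^\mu_1(t_{i+1})$, and hence $e_iE_{\chi^\mu_1}=E_{\chi^\mu_1}$; this absorbs the correction term $u^{-1}v\tg_w e_i$ in~(\ref{mult-r}) into exactly the $u^{-1}v\tT_w$ appearing in the Iwahori--Hecke relation $\tT_w\tT_i=\tT_{ws_i}+u^{-1}v\tT_w$ in $\cH^\mu$. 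One also uses that $\pi_{i,\mu}$ is a distinguished left coset representative, which makes $\ell(\pi_{i,\mu}v)=\ell(\pi_{i,\mu})+\ell(v)$, so $\tg_{\pi_{i,\mu}v}=\tg_{\pi_{i,\mu}}\tg_v$, enabling the parallel rewriting on both sides of the claimed identity.

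Finally, $\Phi_\mu(\sum_i\M_{i,i})=\sum_iE_{\chi^\mu_i}=E_\mu$, so $\Phi_\mu$ is unital; summing over $\mu$ and invoking the decomposition~(\ref{Emu-Y}) into orthogonal central two-sided ideals yields that $\Phi_n=\bigoplus_\mu\Phi_\mu$ and $\Psi_n=\bigoplus_\mu\Psi_\mu$ are mutually inverse $\C[u^{\pm1},v]$-algebra isomorphisms, completing the proof.
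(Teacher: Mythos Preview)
Your Step~(i) is fine and actually a little cleaner than the paper's direct verification: noting that $(i,j,v)\mapsto (i,\pi_{i,\mu}v\pi_{j,\mu}^{-1})$ is a bijection $\{1,\dots,m_\mu\}^2\times\mS^\mu\to\{1,\dots,m_\mu\}\times\mS_n$ immediately shows $\Phi_\mu$ and $\Psi_\mu$ interchange the two bases.

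Step~(ii), however, has a genuine gap. Your induction on $\ell(v')$ has a nontrivial base case: for $v'=1$ and $j\neq l$ you must show
\[
E_{\chi^\mu_i}\,\tg_{\pi_{i,\mu}v\pi_{j,\mu}^{-1}}\,\tg_{\pi_{j,\mu}\pi_{l,\mu}^{-1}}
\;=\;
E_{\chi^\mu_i}\,\tg_{\pi_{i,\mu}v\pi_{l,\mu}^{-1}}\,,
\]
and this does \emph{not} follow from the length additivity $\ell(\pi_{i,\mu}v)=\ell(\pi_{i,\mu})+\ell(v)$ that you invoke. Indeed $\ell(\pi_{i,\mu}v\pi_{j,\mu}^{-1})\neq \ell(\pi_{i,\mu})+\ell(v)+\ell(\pi_{j,\mu})$ in general (take $i=j$, $v=1$), so one cannot simply factor $\tg_{\pi_{i,\mu}v\pi_{j,\mu}^{-1}}$ and cancel $\tg_{\pi_{j,\mu}}^{-1}\tg_{\pi_{j,\mu}}$. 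The multiplication rules~(\ref{mult-r}) produce correction terms $u^{-1}v\,\tg_{(\cdot)}e_{i_l}$ along the reduced word of $\pi_{j,\mu}$, and these must be shown to vanish after multiplying by the relevant idempotent.

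You have identified exactly half of the idempotent mechanism: for $i\in I_\mu$ one has $e_iE_{\chi^\mu_1}=E_{\chi^\mu_1}$, which is what makes the corner algebra $E_{\chi^\mu_1}Y_{d,n}E_{\chi^\mu_1}$ receive a copy of $\cH^\mu$ (this is the paper's Lemma~\ref{l2}). The missing half is the complementary vanishing: if $s_{i_1}\cdots s_{i_k}$ is a reduced expression of a minimal coset representative $\pi_{j,\mu}$, then $e_{i_l}E_{s_{i_{l+1}}\cdots s_{i_k}(\chi^\mu_1)}=0$ for every $l$ (the paper's Lemma~\ref{idem}). It is precisely this vanishing that kills the correction terms and yields the key identity $E_{\chi^\mu_i}\,\tg_{\pi_{i,\mu}w\pi_{j,\mu}^{-1}}\,E_{\chi^\mu_j}=\tg_{\pi_{i,\mu}}\bigl(E_{\chi^\mu_1}\tg_wE_{\chi^\mu_1}\bigr)\tg_{\pi_{j,\mu}}^{-1}$ (Lemma~\ref{l1}). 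Once you have this factorisation, the product telescopes through $\tg_{\pi_{j,\mu}}^{-1}\tg_{\pi_{j,\mu}}=1$ and multiplicativity reduces to that of the corner homomorphism $\phi_\mu$, with no induction on $\ell(v')$ needed. Adding this vanishing lemma (and the resulting factorisation) is exactly what is required to close your argument.
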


 \subsection{Preliminary results}
We first prove a series of useful lemmas.
\begin{lemma}\label{idem}
Let   $\mu\in \operatorname{Comp}_d (n)$ and $i\in \{1,\ldots, m_{\mu}\}$. We consider a reduced expression 
 $s_{i_1}\ldots s_{i_k}$ of $\pi_{i,{\mu}}$.  Then for all $l\in \{1,\ldots,k\}$, we have:
 $$e_{i_l} E_{s_{i_{l+1}} \ldots s_{i_k} (\chi_1^{\mu})}=E_{s_{i_{l+1}} \ldots s_{i_k} (\chi_1^{\mu})}e_{i_l}=0$$
\end{lemma}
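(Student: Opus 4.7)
The plan is to reduce the lemma to a character-theoretic condition on $\chi':=s_{i_{l+1}}\cdots s_{i_k}(\chi_1^\mu)$, and then derive a contradiction using the fact that $\pi_{i,\mu}$ is the minimum-length representative of its left coset modulo $\mathfrak{S}^\mu$.

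First I would observe that, since $e_{i_l}$ and $E_{\chi'}$ both belong to the commutative subalgebra $\mathcal{A}_n$, they commute, so it is enough to handle $E_{\chi'}e_{i_l}$. Expanding $e_{i_l}=\frac{1}{d}\sum_{s=0}^{d-1}t_{i_l}^s t_{i_l+1}^{-s}$ and using (\ref{E-chii}) iteratively gives
\[
E_{\chi'}e_{i_l}=\Bigl(\frac{1}{d}\sum_{0\leq s\leq d-1}\chi'(t_{i_l})^s\chi'(t_{i_l+1})^{-s}\Bigr)E_{\chi'},
\]
and by the orthogonality convention recalled in the Notations paragraph this coefficient vanishes precisely when $\chi'(t_{i_l})\neq \chi'(t_{i_l+1})$. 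Hence the lemma reduces to showing $\chi'(t_{i_l})\neq \chi'(t_{i_l+1})$.

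Next I would argue by contradiction: assume $\chi'(t_{i_l})=\chi'(t_{i_l+1})$. Using the action formula $w(\chi)(t_j)=\chi(t_{w^{-1}(j)})$, this is equivalent to $s_{i_l}(\chi')=\chi'$, that is, to $s_{i_l}s_{i_{l+1}}\cdots s_{i_k}(\chi_1^\mu)=s_{i_{l+1}}\cdots s_{i_k}(\chi_1^\mu)$. Equivalently, the element
\[
\sigma:=s_{i_k}\cdots s_{i_{l+1}}\,s_{i_l}\,s_{i_{l+1}}\cdots s_{i_k}
\]
stabilizes $\chi_1^\mu$. Because the stabilizer of $\chi_1^\mu$ under the $\mathfrak{S}_n$-action is the Young subgroup $\mathfrak{S}^\mu$ (recorded in Subsection \ref{subsec-basis}), we conclude $\sigma\in\mathfrak{S}^\mu$.

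Finally, a direct simplification using $(s_{i_{l+1}}\cdots s_{i_k})(s_{i_k}\cdots s_{i_{l+1}})=1$ and $s_{i_l}^2=1$ yields
\[
\pi_{i,\mu}\,\sigma=(s_{i_1}\cdots s_{i_k})(s_{i_k}\cdots s_{i_{l+1}}s_{i_l}s_{i_{l+1}}\cdots s_{i_k})=s_{i_1}\cdots s_{i_{l-1}}\,s_{i_{l+1}}\cdots s_{i_k},
\]
an element of length at most $k-1$ lying in the coset $\pi_{i,\mu}\mathfrak{S}^\mu$. This contradicts the defining property of $\pi_{i,\mu}$ as the minimum-length representative of its coset, whose length is $\ell(\pi_{i,\mu})=k$. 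The core point of the proof is this length reduction after conjugation of $s_{i_l}$ through the tail of the reduced expression; the passage from idempotent vanishing to the character equality is routine, and I do not expect either step to present a serious obstacle.
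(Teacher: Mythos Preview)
Your proof is correct and follows essentially the same strategy as the paper: reduce the vanishing of $e_{i_l}E_{\chi'}$ to the inequality $\chi'(t_{i_l})\neq\chi'(t_{i_l+1})$, and derive that inequality from the minimality of $\pi_{i,\mu}$. The only cosmetic difference is that the paper argues directly that if $s_{i_l}(\chi')=\chi'$ then the shorter word $s_{i_1}\cdots s_{i_{l-1}}s_{i_{l+1}}\cdots s_{i_k}$ would also send $\chi_1^\mu$ to $\chi_i^\mu$, whereas you phrase the same contradiction via the conjugate $\sigma\in\mathfrak{S}^\mu$ and the coset $\pi_{i,\mu}\mathfrak{S}^\mu$; these are equivalent.
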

\begin{proof}
By definition, $\pi_{i,{\mu}}$ is the (unique) element of $\mathfrak{S}_n$ with minimal length satisfying $\pi_{i,{\mu}} (\chi_1^{\mu})=\chi_i^{\mu}$. As a consequence, we have 
  for all $l=1,\ldots, k$:
  $$s_{i_l}\cdot s_{i_{l+1}} \ldots s_{i_k} (\chi_1^{\mu})\neq s_{i_{l+1}} \ldots s_{i_k} (\chi_1^{\mu})$$
which is equivalent to
  $$s_{i_{l+1}} \ldots s_{i_k} (\chi_1^{\mu}) (t_{i_l})\neq    s_{i_{l+1}} \ldots s_{i_k} (\chi_1^{\mu}) (t_{i_{l}+1})\ .$$
Thus by (\ref{E-chii}), we have 
  $$t_{i_l} E_{s_{i_{l+1}} \ldots s_{i_k} (\chi_1^{\mu})}\neq t_{i_l+1} E_{s_{i_{l+1}} \ldots s_{i_k} (\chi_1^{\mu})}.$$
This discussion shows that 
    $$t_{i_l} t^{-1}_{i_{l+1}}E_{s_{i_{l+1}} \ldots s_{i_k} (\chi_1^{\mu})}= \xi_j E_{s_{i_{l+1}} \ldots s_{i_k} (\chi_1^{\mu})}$$
    for a $d$-root of unity $\xi_j\neq 1$. We conclude that
   $$E_{s_{i_{l+1}} \ldots s_{i_k} (\chi_1^{\mu})}e_{i_l}=e_{i_l} E_{s_{i_{l+1}} \ldots s_{i_k} (\chi_1^{\mu})}=  (\sum_{0\leq s\leq d-1} \xi_j^s) E_{s_{i_{l+1}} \ldots s_{i_k} (\chi_1^{\mu})}=0\ ,$$
where we note, for the first equality, that $e_{i_l}$ commutes with any $E_{\chi}$.
\end{proof}

 \begin{lemma}\label{l1}
 For all  $\mu\in \operatorname{Comp}_d (n)$, $1\leq i,j\leq m_{\mu}$ and $w\in\mS_n$, we have:
 \begin{enumerate}
 \item[(i)] $E_{\chi_1^{\mu}}\ \tg_{\pi_{i,\mu}}^{-1} \tg_w \tg_{\pi_{j,\mu}} E_{\chi_1^{\mu}}=E_{\chi_1^{\mu}}\,\tg_{\pi_{i,\mu}^{-1}w \pi_{j,\mu}} E_{\chi_1^{\mu}}$\ ;
 \item[(ii)] $E_{\chi_i^{\mu}}\,\tg_{\pi_{i,\mu}} \tg_w \tg^{-1}_{\pi_{j,\mu}} E_{\chi_j^{\mu}}=E_{\chi_i^{\mu}} \, \tg_{\pi_{i,\mu} w \pi_{j,\mu}^{-1} } E_{\chi_j^{\mu}}\ .$
 \end{enumerate}
  \end{lemma}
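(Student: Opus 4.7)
The two parts are mirror images, so I focus on part (i); part (ii) will follow by a symmetric argument that applies the key manipulations on the right instead of the left.

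My first goal is the intermediate identity
\[
E_{\chi_1^\mu}\,\tg_{\pi_{i,\mu}}^{-1} \;=\; \tg_{\pi_{i,\mu}^{-1}}\,E_{\chi_i^\mu}\,.
\]
Fix a reduced expression $\pi_{i,\mu}=s_{i_1}\cdots s_{i_k}$, so $\tg_{\pi_{i,\mu}}^{-1}=\tg_{i_k}^{-1}\cdots\tg_{i_1}^{-1}$. I process the factors from outside in: at step $l=k,k-1,\ldots,1$, I substitute $\tg_{i_l}^{-1}=\tg_{i_l}-u^{-1}v\,e_{i_l}$; the $e_{i_l}$-term is killed because Lemma~\ref{idem} gives $E_{s_{i_{l+1}}\cdots s_{i_k}(\chi_1^\mu)}\,e_{i_l}=0$, and I then push the idempotent past $\tg_{i_l}$ via (\ref{rel-g-E}). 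After $k$ iterations the inverses are gone and the idempotent has reached the right, yielding the claim. Multiplying this identity on the right by $\tg_{\pi_{i,\mu}}$ and simplifying gives the corollary $\tg_{\pi_{i,\mu}^{-1}}\tg_{\pi_{i,\mu}}\,E_{\chi_1^\mu}=E_{\chi_1^\mu}$, which I shall need.

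Substituting the intermediate identity into the LHS of (i) and applying the commutations $\tg_{\pi_{j,\mu}}E_{\chi_1^\mu}=E_{\chi_j^\mu}\tg_{\pi_{j,\mu}}$ and $\tg_w E_{\chi_j^\mu}=E_{w(\chi_j^\mu)}\tg_w$ from (\ref{rel-g-E}), the LHS becomes $\tg_{\pi_{i,\mu}^{-1}}E_{\chi_i^\mu}E_{w(\chi_j^\mu)}\tg_w\tg_{\pi_{j,\mu}}$. If $w(\chi_j^\mu)\neq\chi_i^\mu$ the idempotent product vanishes; an analogous rearrangement of the RHS vanishes under exactly the same condition, so both sides are zero. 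In the remaining case, $\tau:=\pi_{i,\mu}^{-1}w\pi_{j,\mu}$ fixes $\chi_1^\mu$, hence $\tau\in\mS^\mu$, and after moving the idempotents back to the extremes both sides reduce to the assertion
\[
\tg_{\pi_{i,\mu}^{-1}}\,\tg_w\,\tg_{\pi_{j,\mu}}\,E_{\chi_1^\mu} \;=\; \tg_\tau\,E_{\chi_1^\mu}\,.
\]
To prove this I establish, by induction on $\ell(\pi')$, the auxiliary lemma that for any $\sigma\in\mS_n$ and any minimal-length left-coset representative $\pi'$ of $\mS_n/\mS^\mu$, one has $\tg_\sigma\,\tg_{\pi'}\,E_{\chi_1^\mu}=\tg_{\sigma\pi'}\,E_{\chi_1^\mu}$. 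The inductive step peels the leftmost letter $s_{l_1}$ off $\pi'$ (the remainder $\tilde\pi'=s_{l_1}\pi'$ is again a minimal representative, by a short length argument); the discrepancy produced by (\ref{mult-r}) is a multiple of $\tg_\sigma\,e_{l_1}\tg_{\tilde\pi'}E_{\chi_1^\mu}$, which dies by Lemma~\ref{idem} once $E_{\chi_1^\mu}$ is commuted through $\tg_{\tilde\pi'}$ into contact with $e_{l_1}$. Applying the auxiliary lemma with $\sigma=w$ and $\pi'=\pi_{j,\mu}$, and then using $w\pi_{j,\mu}=\pi_{i,\mu}\tau$ together with $\ell(\pi_{i,\mu}\tau)=\ell(\pi_{i,\mu})+\ell(\tau)$ to split $\tg_{\pi_{i,\mu}\tau}=\tg_{\pi_{i,\mu}}\tg_\tau$, the left side becomes $\tg_{\pi_{i,\mu}^{-1}}\tg_{\pi_{i,\mu}}\tg_\tau E_{\chi_1^\mu}=\tg_{\pi_{i,\mu}^{-1}}\tg_{\pi_{i,\mu}}E_{\chi_1^\mu}\tg_\tau=E_{\chi_1^\mu}\tg_\tau=\tg_\tau E_{\chi_1^\mu}$, which finishes part (i).

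I expect the main obstacle to be the auxiliary lemma: it must hold for \emph{arbitrary} $\sigma$, not only those for which $\tg_\sigma\tg_{\pi'}=\tg_{\sigma\pi'}$ already holds literally in $Y_{d,n}$. The induction crucially uses the non-trivial fact that removing the leftmost letter of a minimal left-coset representative still yields a minimal representative --- specific to the \emph{left} direction --- which is what makes the inductive machinery close. Part (ii) is then proved by the mirror scheme, with $\tg_{\pi_{j,\mu}}^{-1}E_{\chi_j^\mu}=\tg_{\pi_{j,\mu}^{-1}}E_{\chi_j^\mu}$ (a consequence of the intermediate identity by commutation) playing the role of the initial step on the opposite side.
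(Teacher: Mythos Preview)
Your proof is correct, and it rests on the same key mechanism as the paper's: using Lemma~\ref{idem} to annihilate the $e$-corrections that appear when one absorbs the letters of $\pi_{i,\mu}$ and $\pi_{j,\mu}$ into $\tg_w$. The difference is purely organizational.

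The paper takes a more direct route for (i). Rather than first isolating your intermediate identity $E_{\chi_1^\mu}\tg_{\pi_{i,\mu}}^{-1}=\tg_{\pi_{i,\mu}^{-1}}E_{\chi_i^\mu}$, it keeps $\tg_w$ in the picture and shows in one sweep that
\[
E_{\chi_1^\mu}\,\tg_{\pi_{i,\mu}}^{-1}\,\tg_w \;=\; E_{\chi_1^\mu}\,\tg_{\pi_{i,\mu}^{-1}w}\,,
\]
by peeling the letters $\tg_{i_l}^{-1}$ from the side adjacent to $\tg_w$ and absorbing each into $\tg_w$ via~(\ref{mult-l}); Lemma~\ref{idem} kills both the $e$-term coming from $\tg_{i_l}^{-1}=\tg_{i_l}-u^{-1}ve_{i_l}$ and the possible $e$-term from~(\ref{mult-l}). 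A symmetric sweep on the right with~(\ref{mult-r}) finishes (i). This avoids your case distinction on whether $w(\chi_j^\mu)=\chi_i^\mu$, your auxiliary lemma, and the appeal to length-additivity $\ell(\pi_{i,\mu}\tau)=\ell(\pi_{i,\mu})+\ell(\tau)$. For (ii), the paper does not run a mirror argument: it substitutes $\pi_{i,\mu}\,w\,\pi_{j,\mu}^{-1}$ for $w$ in (i) and conjugates by $\tg_{\pi_{i,\mu}}$ and $\tg_{\pi_{j,\mu}}^{-1}$, which gives (ii) in two lines. Your scheme works, but the paper's packaging is shorter and sidesteps the extra machinery.
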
 
 \begin{proof}
 Let us denote a reduced expression of $\pi_{i,\mu}$ by $s_{i_1}\ldots s_{i_k}$. 
 We have :
  $$\begin{array}{rcl}
  E_{\chi_1^{\mu}}\ \tg_{\pi_{i,\mu}}^{-1}\ \tg_w &= &   E_{\chi_1^{\mu}}\,\tg^{-1}_{{i_k}} \ldots \tg_{{i_1}}^{-1}\  \tg_w\\[0.6em]
  &=& \tg^{-1}_{{i_k}} \ldots  \tg_{{i_2}}^{-1}\ E_{s_{i_2} \ldots s_{i_k}(\chi_1^{\mu})}\ \tg_{{i_1}}^{-1}\ \tg_w\ .
  \end{array}$$
  Recall that for all $j=1,\ldots,n$, we have $\tg_j^{-1}= \tg_j -u^{-1}v e_j$. Thus, with repeated applications of Lemma \ref{idem} together with the multiplication rule (\ref{mult-l}), we deduce that:
    $$\begin{array}{rcl}
E_{\chi_1^{\mu}}\ \tg_{\pi_{i,\mu}}^{-1}\ \tg_w  &= &      \tg^{-1}_{{i_k}} \ldots  \tg_{{i_2}}^{-1}\  E_{s_{i_2} \ldots s_{i_k}(\chi_1^{\mu})}\ \tg_{s_{i_1}w}         \\[0.6em]
&= &      \tg^{-1}_{{i_k}} \ldots  \tg_{{i_3}}^{-1}\ E_{s_{i_3} \ldots s_{i_k}(\chi_1^{\mu})}\ \tg_{{i_2}}^{-1}\ \tg_{s_{i_1}w}         \\[0.5em]
  &=& \ldots\\[0.3em]
  &=&  E_{{\chi_1^{\mu}}}\ \tg_{\pi_{i,\mu}^{-1}w}\ .
  \end{array}$$
Now let us denote by $s_{j_1}\ldots s_{j_l}$ a reduced expression of  $\pi_{j,\mu}$, we have:
    $$\begin{array}{rcl}
E_{\chi_1^{\mu}}\ \tg_{\pi_{i,\mu}}^{-1}\ \tg_w\ \tg_{\pi_{j,\mu}}\ E_{\chi_1^{\mu}}&=&E_{{\chi_1^{\mu}}} \ \tg_{\pi_{i,{\mu}}^{-1}w}\ \tg_{\pi_{j,\mu}}\ E_{\chi_1^{\mu}}\\[0.6em]
&=& E_{{\chi_1^{\mu}}}\ \tg_{\pi_{i,{\mu}}^{-1}w}\ \tg_{{j_1}}\ldots \tg_{{j_l}}\ E_{\chi_1^{\mu}}\\[0.6em]
&=&E_{{\chi_1^{\mu}}}\ \tg_{\pi_{i,\mu}^{-1}w}\ \tg_{{j_1}}\ E_{s_{j_2} \ldots s_{j_l}(\chi_1^{\mu})}\ \tg_{i_2}\ldots \tg_{i_l}\ .
  \end{array}$$
As above, with repeated applications of Lemma \ref{idem} together with the multiplication rule (\ref{mult-r}), we obtain:
   $$\begin{array}{rcl}
E_{\chi_1^{\mu}}\ \tg_{\pi_{i,\mu}}^{-1}\ \tg_w\ \tg_{\pi_{j,\mu}}\ E_{\chi_1^{\mu}}&=&  E_{{\chi_1^{\mu}}}\  \tg_{\pi_{i,{\mu}}^{-1}ws_{j_1}}\ E_{s_{j_2} \ldots s_{j_l}(\chi_1^{\mu})}\ \tg_{j_2}\ldots \tg_{j_l}\\[0.6em]
&=&  E_{{\chi_1^{\mu}}}\ \tg_{\pi_{i,\mu}^{-1}ws_{j_1}}\ \tg_{j_2}\ E_{s_{j_3} \ldots s_{j_l}(\chi_1^{\mu})}\ \tg_{j_3}\ldots \tg_{j_l}\\[0.5em]
&=&\ldots\\[0.3em]
&=&E_{\chi_1^{\mu}}\ \tg_{\pi_{i,\mu}^{-1}w \pi_{j,\mu}}\ E_{\chi_1^{\mu}}\ ,
  \end{array}$$
 which  proves item $(i)$. Let us now prove item $(ii)$.  We have by
 (\ref{defc}) and 
 (\ref{rel-g-E}) :
    $$\begin{array}{rcl}
E_{\chi_i^{\mu}}\ \tg_{\pi_{i,\mu}}\ \tg_w\ \tg^{-1}_{\pi_{j,\mu}}\ E_{\chi_j^{\mu}}&=& \ \tg_{\pi_{i,\mu}}\,  E_{\chi_1^{\mu}}\ \tg_w \ E_{\chi_1^{\mu}}\ \tg^{-1}_{\pi_{j,\mu}} \\[0.6em]
&=&\tg_{\pi_{i,\mu}}\,E_{\chi_1^{\mu}}\ \tg_{\pi_{i,\mu}^{-1}\pi_{i,\mu} w \pi_{j,\mu}^{-1} \pi_{j,\mu}}\  E_{\chi_1^{\mu}}\ \tg^{-1}_{\pi_{j,\mu}}\\[0.6em]
&=&\tg_{\pi_{i,\mu}}\,E_{\chi_1^{\mu}}\ \tg_{\pi_{i,\mu}}^{-1}\ \tg_{\pi_{i,\mu} w \pi_{j,\mu}^{-1}}\ \tg_{ \pi_{j,\mu}}\ E_{\chi_1^{\mu}}\ \tg^{-1}_{\pi_{j,\mu}}\ ,
  \end{array}$$
where the last equality comes from item $(i)$. The proof is concluded using that $\tg_{\pi_{i,\mu}}E_{\chi_1^{\mu}}\,\tg_{\pi_{i,\mu}}^{-1}=E_{\chi_i^{\mu}}$ and $\tg_{ \pi_{j,\mu}}E_{\chi_1^{\mu}}\,\tg^{-1}_{\pi_{j,\mu}}=E_{\chi_j^{\mu}}$.
 \end{proof}
 
  \begin{lemma}\label{l2}
 Let $\mu\in \operatorname{Comp}_d (n)$. The map 
$$\phi_{\mu} :\mathcal{H}^{\mu} \to E_{\chi_1^{\mu}} Y_{d,n} E_{\chi_1^{\mu}},$$
defined on the generators by 
$$\forall i\in I_{\mu},\ \phi_{\mu} (T_i)= E_{\chi_1^{\mu}} g_i E_{\chi_1^{\mu}},$$
extends to an homomorphism of algebras. 

 \end{lemma}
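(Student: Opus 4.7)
The plan is to check that the proposed images $\phi_\mu(T_i) = E_{\chi_1^\mu} g_i E_{\chi_1^\mu}$ for $i \in I_\mu$ satisfy the three defining relations of $\mathcal{H}^\mu$ (the commutation, braid, and quadratic relations from (\ref{def-H}) restricted to generators indexed by $I_\mu$), and that the unit $1 \in \mathcal{H}^\mu$ maps to the unit $E_{\chi_1^\mu}$ of $E_{\chi_1^\mu} Y_{d,n} E_{\chi_1^\mu}$. The key preliminary observation is that for $i \in I_\mu$, the simple transposition $s_i$ lies in the stabilizer $\mathfrak{S}^\mu$ of $\chi_1^\mu$, so by the explicit values (\ref{chi1-mu}) we have $\chi_1^\mu(t_i) = \chi_1^\mu(t_{i+1})$.

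From this I will deduce two simplifications that make all the computations routine. First, from (\ref{rel-g-E}) applied to $w = s_i$ with $s_i(\chi_1^\mu) = \chi_1^\mu$, we obtain $g_i E_{\chi_1^\mu} = E_{\chi_1^\mu} g_i$ for $i \in I_\mu$; hence $\phi_\mu(T_i) = g_i E_{\chi_1^\mu} = E_{\chi_1^\mu} g_i$, using $E_{\chi_1^\mu}^2 = E_{\chi_1^\mu}$. Second, expanding $e_i E_{\chi_1^\mu}$ via (\ref{E-chii}) gives
\[
e_i E_{\chi_1^\mu} = \frac{1}{d}\sum_{0\leq s\leq d-1} \chi_1^\mu(t_i)^s \chi_1^\mu(t_{i+1})^{-s}\, E_{\chi_1^\mu} = E_{\chi_1^\mu},
\]
since every term in the sum equals $1$ when $i \in I_\mu$.

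With these two identities, the three relations are verified directly. The commutation relation for $|i-j|>1$ and the braid relation for $i,i+1 \in I_\mu$ follow at once from the corresponding relations among the $g_i$ in $Y_{d,n}$, after pulling all factors of $E_{\chi_1^\mu}$ to the right (or left) using the commutation $g_i E_{\chi_1^\mu} = E_{\chi_1^\mu} g_i$ and collapsing $E_{\chi_1^\mu}^2 = E_{\chi_1^\mu}$. For the quadratic relation, we compute
\[
\phi_\mu(T_i)^2 = g_i^2 E_{\chi_1^\mu} = (u^2 + v\, e_i g_i) E_{\chi_1^\mu} = u^2 E_{\chi_1^\mu} + v\, e_i E_{\chi_1^\mu}\, g_i = u^2 E_{\chi_1^\mu} + v\, \phi_\mu(T_i),
\]
which is the image of $T_i^2 = u^2 + v T_i$ under $\phi_\mu$ (with $1 \mapsto E_{\chi_1^\mu}$).

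There is no serious obstacle; the only subtle point is the quadratic relation, where the idempotent $e_i$ appearing in the Yokonuma deformation collapses to the unit $E_{\chi_1^\mu}$ of the target algebra precisely because $s_i$ stabilizes $\chi_1^\mu$ for $i \in I_\mu$. This is exactly why one must restrict to generators indexed by $I_\mu$ rather than all of $\{1,\dots,n-1\}$, and it is what allows the Yokonuma quadratic relation to degenerate to the Iwahori--Hecke quadratic relation inside $E_{\chi_1^\mu} Y_{d,n} E_{\chi_1^\mu}$.
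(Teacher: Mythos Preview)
Your proof is correct and follows essentially the same approach as the paper's: both rely on the two key identities $g_i E_{\chi_1^\mu} = E_{\chi_1^\mu} g_i$ and $e_i E_{\chi_1^\mu} = E_{\chi_1^\mu}$ for $i \in I_\mu$, then verify the braid, commutation, and quadratic relations directly. Your write-up is slightly more explicit in expanding the sum defining $e_i E_{\chi_1^\mu}$, but the argument is otherwise identical.
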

 \begin{proof} Recall that the subspace $E_{\chi_1^{\mu}} Y_{d,n} E_{\chi_1^{\mu}}$ is a unital subalgebra of $Y_{d,n}$ with unit $E_{\chi_1^{\mu}}$.
 
 We first note that if $i\in I_{\mu}$ then $s_i(\chi_{1}^{\mu})=\chi_1^{\mu}$. We thus have 
 $$g_i E_{ \chi_{1}^{\mu}}=E_{ \chi_{1}^{\mu}} g_i$$
 and this relation easily  implies that  the elements $E_{\chi_1^{\mu}} g_i E_{\chi_1^{\mu}}$ with 
 $i\in I_{\mu}$ satisfy the braid relations. It remains to check the ``quadratic relation''. We have 
 $$
 \begin{array}{rcl}
 (E_{\chi_1^{\mu}} g_i E_{\chi_1^{\mu}})^2&=&E_{\chi_1^{\mu}} g_i^2 E_{\chi_1^{\mu}}\\
 &=&u^2 E_{\chi_1^{\mu}} + v E_{\chi_1^{\mu}} e_i g_i E_{\chi_1^{\mu}}\\
  &=&u^2 E_{\chi_1^{\mu}} + v E_{\chi_1^{\mu}} g_i E_{\chi_1^{\mu}}
  \end{array}
 $$
The last equality comes from the fact that for $i\in I_{\mu}$ we have  $t_i E_{ \chi_{1}^{\mu}}=t_{i+1} E_{ \chi_{1}^{\mu}},$ and thus 
$$e_i E_{ \chi_{1}^{\mu}}=E_{ \chi_{1}^{\mu}}e_i=E_{ \chi_{1}^{\mu}}.$$
Thus all the defining relations of $\cH^{\mu}=\cH_{\mu_1}\otimes\dots\otimes\cH_{\mu_d}$ are satisfied so that
  $\phi_{\mu}$ can be extended to a homomorphism of algebras. 
 \end{proof}
 
 \begin{Rem}
One can actually show that the morphism $\phi_{\mu}$ is an isomorphism. Indeed the lemma implies that $\phi_{\mu}$ is given on the standard basis of $\cH^{\mu}$ by $\phi_{\mu}(T_w)=E_{\chi_1^{\mu}} g_w E_{\chi_1^{\mu}}$, $w\in\mS^{\mu}$. Moreover, if $w\in\mS^{\mu}$ then $w(\chi_1^{\mu})=\chi_1^{\mu}$ and therefore $\phi_{\mu}(T_w)=E_{\chi_1^{\mu}} g_w$. So it remains to check that $\{ E_{\chi_1^{\mu}} g_w \ |\ w\in \mathfrak{S}^{\mu}\}$ is a basis of $E_{\chi_1^{\mu}} Y_{d,n} E_{\chi_1^{\mu}}$. The linear independence is immediate, while the spanning property follows from the following calculation, for a basis element $E_{\chi_i^{\nu}}g_w$ of $Y_{d,n}$ 
 and $\nu \in \operatorname{Comp}_d (n)$,
\[E_{\chi_1^{\mu}}\cdot E_{\chi_i^{\nu}}g_w\cdot E_{\chi_1^{\mu}}=E_{\chi_1^{\mu}}E_{\chi_i^{\nu}} E_{w(\chi_1^{\mu})}g_w=\left\{\begin{array}{ll}E_{\chi_1^{\mu}}g_w & \text{if $\mu=\nu$, $i=1$ and $w\in\mS^{\mu}$;}\\[0.3em]
0 & \text{otherwise.}\end{array}\right.\]
In the following, we will not use the fact that $\phi_{\mu}$ is actually an isomorphism (actually, it is a consequence of Theorem \ref{main}).
 \hfill$\triangle$ 
 \end{Rem}

 \subsection{Proof of Theorem \ref{main}} 
Let $\mu\in \operatorname{Comp}_d (n)$.

\vskip .1cm
\textbf{1.} We first show that $\Phi_{\mu}$ is a morphism. Before this, we note that by Lemma \ref{l1}(ii), for all 
 $1\leq i,j \leq m_{\mu}$ and $w\in\mS^{\mu}$, we have:
\begin{equation}\label{Phi-phi}
\begin{array}{rcl}
 E_{\chi^{\mu}_i}\ \tg_{\pi_{i,\mu} w \pi_{j,\mu}^{-1}}\ E_{\chi^{\mu}_j} &=& 
  E_{\chi^{\mu}_i}\ \tg_{\pi_{i,\mu}}\ \tg_{w}\ \tg_{\pi_{j,\mu}}^{-1}\ E_{\chi^{\mu}_j}\\[0.5em]
  &=& 
  \tg_{\pi_{i,\mu}}\ E_{\chi^{\mu}_1}\ \tg_{w}\,E_{\chi^{\mu}_1}\ \tg_{\pi_{j,\mu}}^{-1}\\[0.5em]
 &=&  \tg_{\pi_{i,\mu}}\ \phi_{\mu}(\tT_{w})\ \tg_{\pi_{j,\mu}}^{-1}\ .
\end{array}
\end{equation}

Now, let $i,j,k,l\in\{1,\dots,m_{\mu}\}$ and $w,w'\in\mS^{\mu}$. We have:
\[\Phi_{\mu}\bigl( \tT_w\,\M_{i,j}\bigr)\ \Phi_{\mu}\bigl( \tT_{w'}\,\M_{k,l}\bigr) = 
E_{\chi^{\mu}_i}\ \tg_{\pi_{i,\mu} w \pi_{j,\mu}^{-1}}\ E_{\chi^{\mu}_j}
\  E_{\chi^{\mu}_k}\ \tg_{\pi_{k,\mu} w' \pi_{l,\mu}^{-1}}\ E_{\chi^{\mu}_l}\ .\]
As $E_{\chi^{\mu}_j}$ and $E_{\chi^{\mu}_k}$ belong to a family of pairwise orthogonal idempotents, this is equal to 0 if $j\neq k$. On the other hand, we also have that $\tT_w\,\M_{i,j}\cdot \tT_{w'}\,\M_{k,l}$ is equal to 0 if $j\neq k$. 

So it remains only to consider the situation $j=k$. If $j=k$, we obtain
\begin{equation}\label{Phi1}\begin{array}{rcl}
\Phi_{\mu}\bigl( \tT_w\,\M_{i,j}\bigr)\ \Phi_{\mu}\bigl( \tT_{w'}\,\M_{j,l}\bigr) & = & 
 E_{\chi^{\mu}_i}\ \tg_{\pi_{i,\mu} w \pi_{j,\mu}^{-1}}\ E_{\chi^{\mu}_j}
\,\cdot\, E_{\chi^{\mu}_j}\ \tg_{\pi_{j,\mu} w' \pi_{l,\mu}^{-1}}\ E_{\chi^{\mu}_l} \\[0.6em]
 &=& 
 \tg_{\pi_{i,\mu}}\ \phi_{\mu}(\tT_{w})\ \tg_{\pi_{j,\mu}}^{-1}\,\cdot\,\tg_{\pi_{j,\mu}}\ \phi_{\mu}(\tT_{w'})\ \tg_{\pi_{l,\mu}}^{-1} \\[0.6em]
 &=& 
 \tg_{\pi_{i,\mu}}\ \phi_{\mu}(\tT_{w}\cdot\tT_{w'})\ \tg_{\pi_{l,\mu}}^{-1}\ ,
\end{array}
\end{equation}
where we used successively (\ref{Phi-phi}) and Lemma \ref{l2}. On the other hand, we have that $\tT_w\,\M_{i,j}\cdot \tT_{w'}\,\M_{j,l}$ is equal to $\tT_w\,\tT_{w'}\,\M_{i,l}$. The product $\tT_w\,\tT_{w'}$ can be written uniquely as
\[\tT_w\,\tT_{w'}=\sum_{x\in\mS^{\mu}}c_x\tT_x\ ,\]
for some coefficients $c_x\in\C[u^{\pm1},v]$. We have now
$$\begin{array}{rcl}
\Phi_{\mu}\bigl(\tT_w\,\tT_{w'}\,\M_{i,l}\bigr) & = & 
\displaystyle\sum_{x\in\mS^{\mu}}c_x\ E_{\chi^{\mu}_i}\ \tg_{\pi_{i,\mu} x \pi_{j,\mu}^{-1}}\ E_{\chi^{\mu}_j}\\[0.6em]
 &=& 
\displaystyle \sum_{x\in\mS^{\mu}}c_x\ \tg_{\pi_{i,\mu}}\ \phi_{\mu}(\tT_x)\ \tg_{\pi_{l,\mu}}^{-1}\ ,
\end{array}
$$
by (\ref{Phi-phi}) again. Comparing with (\ref{Phi1}) concludes the verification that $\Phi_{\mu}$ is a morphism of algebras.

\vskip .1cm
\textbf{2.} We now check that $\Phi_{\mu}$ and $\Psi_{\mu}$ 
 are inverse maps.  Let $ w\in \mathfrak{S}_n$ and let $i\in\{1,\dots,m_{\mu}\}$. Let also $j\in\{1,\dots,m_{\mu}\}$ be uniquely defined by $\chi_i^{\mu}=w(\chi_j^{\mu})$. By definition of $\Phi_{\mu}$ and $\Psi_{\mu}$, we have
$$\Phi_{\mu} \circ \Psi_{\mu} (E_{\chi^{\mu}_i}\,\tg_{w})\ =\ \Phi_{\mu}(\tT_{\pi_{i,\mu}^{-1}w\pi_{j,\mu}}\,\M_{i,j})\ =\ E_{\chi_i^{\mu}}\,\tg_w\,E_{\chi_j^{\mu}}\ .$$
As $\chi_j^{\mu}=w^{-1}(\chi_i^{\mu})$ and $E_{\chi_i^{\mu}}$ is an idempotent, we conclude that this is indeed equal to $E_{\chi^{\mu}_i}\,\tg_{w}$.

\vskip .1cm
On the other hand, let $w\in\mS^{\mu}$ and $i,j\in\{1,\dots,m_{\mu}\}$.
$$\begin{array}{rcll}
\Psi_{\mu} \circ \Phi_{\mu} \bigl(\tT_w\,\M_{i,j}\bigr)    &=& \Psi_{\mu}\bigl(    
E_{\chi^{\mu}_i}\ \tg_{\pi_{i,\mu}w \pi_{j,\mu}^{-1}}\ E_{\chi^{\mu}_j}\bigr) & \\[0.6em]
&=& \Psi_{\mu}\bigl(E_{\chi^{\mu}_i}\ \tg_{\pi_{i,\mu} w \pi_{j,\mu}^{-1}}\bigr) & (\,\text{because $\pi_{i,\mu} w_{i,j} \pi_{j,\mu}^{-1} (\chi_j^{\mu})=\chi_i^{\mu}$}\,) \\[0.6em]
&=& \tT_{  \pi_{i,\mu}^{-1}  \pi_{i,\mu} w \pi_{j,\mu}^{-1}\pi_{j',\mu}}\,\M_{i,j'}\ , &
\end{array}$$
where the integer $j'\in\{1,\dots,m_{\mu}\}$ is uniquely defined by $\pi_{i,\mu} w \pi_{j,\mu}^{-1} (\chi_{j'}^{\mu})=\chi_{i}^{\mu}$. As $w\in\mS^{\mu}$, this condition yields $j'=j$, which concludes the proof.\hfill$\square$

\begin{Exa}\label{exa1}
 Let $d=2$ and $n=4$. We will give explicitly in this example the images of $g_1,\dots,g_{n-1}$, $t_1,\dots,t_n$ and $e_1,\dots,e_{n-1}$ of $Y_{d,n}$ under the isomorphism $\Psi_n$ of Theorem \ref{main}. In the matrices below, the dots stand for coefficients equal to 0.

\vskip .1cm
First, we note that, for any $\mu\in\Comp_d(n)$, the matrix $\Psi_{\mu}(t_i)$ ($i\in\{1,\dots,n\}$) is diagonal, more precisely, we have:
\[\Psi_{\mu}(t_i)=\Psi_{\mu}(E_{\mu}t_i)=\Psi_{\mu}(\sum_{1\leq k\leq m_{\mu}}E_{\chi_k^{\mu}}t_i)=\Psi_{\mu}(\sum_{1\leq k\leq m_{\mu}}\chi_k^{\mu}(t_i)E_{\chi_k^{\mu}})=
\sum_{1\leq k\leq m_{\mu}}\chi_k^{\mu}(t_i)\,\M_{k,k}\ .\]
We will denote by $\text{Diag}(x_1,\dots,x_N)$ a diagonal matrix with coefficients $x_1,\dots,x_N$ on the diagonal. We also recall that, for $i=1,\dots,n-1$, we have $g_i=u\tg_i$ and
\[\Psi_{\mu}(\tg_i)=\Psi_{\mu}(E_{\mu}\tg_i)=\Psi_{\mu}(\sum_{1\leq k\leq m_{\mu}}E_{\chi_k^{\mu}}\tg_i)=
\sum_{1\leq k\leq m_{\mu}}\tT_{\pi_{k,{\mu}}^{-1} s_i \pi_{j_k,{\mu}}}\,\M_{k,j_k}\ ,\]
where, for each $k\in\{1,\dots,m_{\mu}\}$, the integer $j_k\in\{1,\dots,m_{\mu}\}$ is uniquely determined by $s_i(\chi^{\mu}_{j_k})=\chi^{\mu}_k$.

\vskip .2cm
\noindent $\bullet$ Let $\mu=(4,0)$ or $\mu=(0,4)$. Then $m_{\mu}=1$ and $\cH^{\mu}\cong\cH_4$. There is only one character in the orbit $\mathcal{O} (\mu)$, which is $\chi^{\mu}_1=(\xi_a,\xi_a,\xi_a,\xi_a)$, where $a=1$ if $\mu=(4,0)$ and $a=2$ if $\mu=(0,4)$. In this situation, we have
\[g_i\mapsto (T_i)\,,\ \ \ \ \ t_j\mapsto(\xi_a)\,,\ \ \ \ \ e_i\mapsto(1)\,,\ \ \ \ \ \ \ \text{for $i=1,2,3\,$ and $j=1,2,3,4\,$,}\]
where $a=1$ if $\mu=(4,0)$ and $a=2$ if $\mu=(0,4)$. 

\vskip .2cm
\noindent $\bullet$
 Let $\mu=(3,1)$. Then $m_{\mu}=4$ and $\cH^{\mu}\cong\cH_3\otimes\cH_1$ and we identify it below with $\cH_3$. We order the characters in the orbit $\mathcal{O} (\mu)$ as follows: 
$$\chi^{\mu}_1=(\xi_1,\xi_1,\xi_1,\xi_2)\,,\ \ \ \ \chi^{\mu}_2=(\xi_1,\xi_1,\xi_2,\xi_1)\,,\ \ \ \ \chi^{\mu}_3=(\xi_1,\xi_2,\xi_1,\xi_1)\ \ \ \text{and}\ \ \ \chi^{\mu}_4=(\xi_2,\xi_1,\xi_1,\xi_1)\ .$$
Thus we have $\pi_{1,\mu}=1$,\  $\pi_{2,\mu}=s_3$,\  $\pi_{3,\mu}=s_2s_3$\  and\  $\pi_{4,\mu}=s_1s_2s_3\,$. The map $\Psi_{\mu}$ is given by:
\[g_1\mapsto\left(\begin{array}{cccc}T_1 & \cdot & \cdot & \cdot\\ \cdot & T_1 & \cdot & \cdot\\ \cdot & \cdot & \cdot & u\\ \cdot & \cdot & u & \cdot\end{array}\right)\,,\ \ \ \ 
g_2\mapsto\left(\begin{array}{cccc}T_2 & \cdot & \cdot & \cdot\\ \cdot & \cdot & u & \cdot\\ \cdot & u & \cdot & \cdot\\ \cdot & \cdot & \cdot & T_1\end{array}\right)\,,\ \ \ \ 
g_3\mapsto\left(\begin{array}{cccc}\cdot & u & \cdot & \cdot\\ u & \cdot & \cdot & \cdot\\ \cdot & \cdot & T_2 & \cdot\\ \cdot & \cdot & \cdot & T_2\end{array}\right)\ ;\]
\[t_1\mapsto\text{Diag}(\xi_1,\xi_1,\xi_1,\xi_2)\,,\ \ \ t_2\mapsto\text{Diag}(\xi_1,\xi_1,\xi_2,\xi_1)\,,\ \ \  t_3\mapsto\text{Diag}(\xi_1,\xi_2,\xi_1,\xi_1)\,,\ \ \ t_4\mapsto\text{Diag}(\xi_2,\xi_1,\xi_1,\xi_1)\,;\]
\[e_1\mapsto\text{Diag}(1,1,0,0)\,,\ \ \ \ e_2\mapsto\text{Diag}(1,0,0,1)\,,\ \ \ \ e_3\mapsto\text{Diag}(0,0,1,1)\ .\]

\vskip .2cm
\noindent $\bullet$
 Let $\mu=(1,3)$. Then $m_{\mu}=4$ and $\cH^{\mu}\cong\cH_1\otimes\cH_3$ and we identify it below with $\cH_3$. We order the characters in the orbit $\mathcal{O} (\mu)$ as follows: 
$$\chi^{\mu}_1=(\xi_1,\xi_2,\xi_2,\xi_2)\,,\ \ \ \ \chi^{\mu}_2=(\xi_2,\xi_1,\xi_2,\xi_2)\,,\ \ \ \ \chi^{\mu}_3=(\xi_2,\xi_2,\xi_1,\xi_2)\ \ \ \text{and}\ \ \ \chi^{\mu}_4=(\xi_2,\xi_2,\xi_2,\xi_1)\ .$$
Thus we have $\pi_{1,\mu}=1$,\  $\pi_{2,\mu}=s_1$,\  $\pi_{3,\mu}=s_2s_1$\  and\  $\pi_{4,\mu}=s_3s_2s_1\,$. The map $\Psi_{\mu}$ is given by:
\[g_1\mapsto\left(\begin{array}{cccc}\cdot & u & \cdot & \cdot\\ u & \cdot & \cdot & \cdot\\ \cdot & \cdot & T_1 & \cdot\\ \cdot & \cdot & \cdot & T_1\end{array}\right)\,,\ \ \ \ 
g_2\mapsto\left(\begin{array}{cccc}T_1 & \cdot & \cdot & \cdot\\ \cdot & \cdot & u & \cdot\\ \cdot & u & \cdot & \cdot\\ \cdot & \cdot & \cdot & T_2\end{array}\right)\,,\ \ \ \ 
g_3\mapsto\left(\begin{array}{cccc}T_2 & \cdot & \cdot & \cdot\\ \cdot & T_2 & \cdot & \cdot\\ \cdot & \cdot & \cdot & u\\ \cdot & \cdot & u & \cdot\end{array}\right)\ ;\]
\[t_1\mapsto\text{Diag}(\xi_1,\xi_2,\xi_2,\xi_2)\,,\ \ \ t_2\mapsto\text{Diag}(\xi_2,\xi_1,\xi_2,\xi_2)\,,\ \ \  t_3\mapsto\text{Diag}(\xi_2,\xi_2,\xi_1,\xi_2)\,,\ \ \ t_4\mapsto\text{Diag}(\xi_2,\xi_2,\xi_2,\xi_1)\,;\]
\[e_1\mapsto\text{Diag}(0,0,1,1)\,,\ \ \ \ e_2\mapsto\text{Diag}(1,0,0,1)\,,\ \ \ \ e_3\mapsto\text{Diag}(1,1,0,0)\ .\]

\vskip .2cm
\noindent $\bullet$
 Let $\mu=(2,2)$. Then $m_{\mu}=6$ and $\cH^{\mu}\cong\cH_2\otimes\cH_2$. We order the characters in the orbit $\mathcal{O} (\mu)$ as follows: 
$$\begin{array}{c}\chi^{\mu}_1=(\xi_1,\xi_1,\xi_2,\xi_2)\,,\ \ \ \ \chi^{\mu}_2=(\xi_1,\xi_2,\xi_1,\xi_2)\,,\ \ \ \ \chi^{\mu}_3=(\xi_2,\xi_1,\xi_1,\xi_2)\,,\\[0.4em]
\chi^{\mu}_4=(\xi_1,\xi_2,\xi_2,\xi_1)\,,\ \ \ \ \chi^{\mu}_5=(\xi_2,\xi_1,\xi_2,\xi_1)\,,\ \ \ \ \chi^{\mu}_6=(\xi_2,\xi_2,\xi_1,\xi_1)\,.\end{array}$$
Thus we have $\pi_{1,\mu}=1$,\  $\pi_{2,\mu}=s_2$,\  $\pi_{3,\mu}=s_1s_2$,\  $\pi_{4,\mu}=s_3s_2$,\  $\pi_{5,\mu}=s_1s_3s_2$\  and\  $\pi_{6,\mu}=s_2s_1s_3s_2\,$. The map $\Psi_{\mu}$ is given by (where $T'_1:=T_1\otimes 1$ and $T''_1:=1\otimes T_1$):
\[g_1\mapsto\!\!\left(\begin{array}{cccccc}T'_1 & \cdot & \cdot & \cdot & \cdot & \cdot\\ \cdot & \cdot & u & \cdot & \cdot & \cdot\\ \cdot & u & \cdot & \cdot & \cdot & \cdot\\ \cdot & \cdot & \cdot & \cdot & u & \cdot\\ \cdot & \cdot & \cdot & u & \cdot & \cdot\\ \cdot & \cdot & \cdot & \cdot & \cdot & T''_1\end{array}\right)\!,\ \ \ \ 
g_2\mapsto\!\!\left(\begin{array}{cccccc}\cdot & u & \cdot & \cdot & \cdot & \cdot\\ u & \cdot & \cdot & \cdot & \cdot & \cdot\\ \cdot & \cdot & T'_1 & \cdot & \cdot & \cdot\\ \cdot & \cdot & \cdot & T''_1 & \cdot & \cdot\\ \cdot & \cdot & \cdot & \cdot & \cdot & u\\ \cdot & \cdot & \cdot & \cdot & u & \cdot\end{array}\right)\!,\ \ \ \ 
g_3\mapsto\!\!\left(\begin{array}{cccccc}T''_1 & \cdot & \cdot & \cdot & \cdot & \cdot\\ \cdot & \cdot & \cdot & u & \cdot & \cdot\\ \cdot & \cdot & \cdot & \cdot & u & \cdot\\ \cdot & u & \cdot & \cdot & \cdot & \cdot\\ \cdot & \cdot & u & \cdot & \cdot & \cdot\\ \cdot & \cdot & \cdot & \cdot & \cdot & T'_1\end{array}\right)\!;\]
\[\begin{array}{c}t_1\mapsto\text{Diag}(\xi_1,\xi_1,\xi_2,\xi_1,\xi_2,\xi_2)\,,\ \ \ t_2\mapsto\text{Diag}(\xi_1,\xi_2,\xi_1,\xi_2,\xi_1,\xi_2)\,,\\[0.4em]
t_3\mapsto\text{Diag}(\xi_2,\xi_1,\xi_1,\xi_2,\xi_2,\xi_1)\,,\ \ \ 
t_4\mapsto\text{Diag}(\xi_2,\xi_2,\xi_2,\xi_1,\xi_1,\xi_1)\,;\end{array}\]
\[e_1\mapsto\text{Diag}(1,0,0,0,0,1)\,,\ \ \ \ e_2\mapsto\text{Diag}(0,0,1,1,0,0)\,,\ \ \ \ e_3\mapsto\text{Diag}(1,0,0,0,0,1)\ .\]
\hfill$\triangle$
\end{Exa}

\subsection{Natural inclusions of subalgebras}\label{natural}

We recall that, for any $n\geq1$, the algebra $Y_{d,n}$ is naturally embedded into $Y_{d,n+1}$, as the subalgebra generated by $t_1,\dots,t_n,g_1,\dots,g_{n-1}$. If $x\in Y_{d,n}$, we will abuse notation and write also $x$ for the corresponding element of $Y_{d,n+n'}$, $n'\geq1$. Very often the context will make clear where $x$ lives, and otherwise we will specify it explicitly.

\vskip .1cm
Let $n\geq1$ and $\mu=(\mu_1,\dots,\mu_d)\in\Comp_d(n)$. For any $\mu'\geq\mu$ for the natural order on compositions (namely, $\mu'_a\geq\mu_a$ for $a=1,\dots,d$), we have a natural embedding of $\cH^{\mu}$ into $\cH^{\mu'}$. Explicitly, using the isomorphisms 
 $$\cH^{\mu}\simeq \cH_{\mu_1} \otimes \ldots \otimes \cH_{\mu_d}\ \ \ \textrm{ and }\ \ \ \cH^{\mu'}\simeq \cH_{\mu_1 '} \otimes \ldots \otimes \cH_{\mu_d '}\ ,$$
the embedding is given by
\[\ \cH_{\mu_1} \otimes \ldots \otimes \cH_{\mu_d}\ni x_1\otimes\dots\otimes x_d\mapsto x_1\otimes\dots\otimes x_d\in \cH_{\mu_1 '} \otimes \ldots \otimes \cH_{\mu_d '} .\]
When $\mu'=\mu^{[a]}$ for $a\in\{1,\ldots,d\}$, see (\ref{def-mu-a+}), the natural embedding $\cH^{\mu}\subset \cH^{\mu^{[a]}}$ is expressed on the basis $\{\tT_w\,,\ w\in\mS^{\mu}\}$ of $\cH^{\mu}$   by
\begin{equation}\label{inc-Hmu}
\cH^{\mu}\ni \tT_w \mapsto \tT_{(\mu_1+\dots+\mu_a+1,\dots,n-1,n)\,w\,(\mu_1+\dots+\mu_a+1,\dots,n-1,n)^{-1}}\in\cH^{\mu^{[a]}}\ ,
\end{equation}
where $(\mu_1+\dots+\mu_a+1,\dots,n-1,n)$ is the cyclic permutation on $\mu_1+\dots+\mu_a+1,\dots,n-1,n$.

\paragraph{Inclusion of basis elements.} Let $E_{\chi_k^{\mu}}\tg_w$ be an element of the basis of $Y_{d,n}$, where $\mu\in\Comp_d(n)$, $k\in\{1,\dots,m_{\mu}\}$ and $w\in\mS_n$.

For $a\in\{1,\dots,d\}$, denote by $k_a$ the integer in $\{1,\dots,m_{\mu^{[a]}}\}$ such that $\chi_{k_a}^{\mu^{[a]}}\in \operatorname{Irr}(\mathcal{A}_{n+1})$ is the character given by
\[\chi_{k_a}^{\mu^{[a]}}(t_i)=\chi_{k}^{\mu}(t_i)\,,\ \ \text{if $i=1,\dots,n$,}\quad\ \ \ \ \text{and}\ \ \ \ \quad\chi_{k_a}^{\mu^{[a]}}(t_{n+1})=\xi_a\ .\]
The characters $\{\chi_{k_a}^{\mu^{[a]}}\,,\ a=1,\dots,d\}$ are all the irreducible characters of $\mathcal{A}_{n+1}$ containing $\chi_k^{\mu}$ in their restriction to $\mathcal{A}_n$, and therefore we have $E_{\chi_k^{\mu}}=\sum_{1\leq a \leq d} E_{\chi_{k_a}^{\mu^{[a]}}}\ $ in $\mathcal{A}_{n+1}\,$. Thus, in $Y_{d,n+1}$, we have:
\begin{equation}\label{inc-basis}
E_{\chi_k^{\mu}}\tg_w=\sum_{1\leq a \leq d} E_{\chi_{k_a}^{\mu^{[a]}}}\tg_w\ .
\end{equation}

\paragraph{A formula for $\pi_{k_a,\mu^{[a]}}$.} Let $a\in\{1,\dots,d\}$. We recall that $\pi_{k,\mu}$ is defined as the element of $\mS_n$ of minimal length such that $\pi_{k,\mu}(\chi^{\mu}_1)=\chi_k^{\mu}$, and similarly, $\pi_{k_a,\mu^{[a]}}$ is the element of $\mS_{n+1}$ of minimal length such that $\pi_{k_a,\mu^{[a]}}(\chi^{\mu^{[a]}}_1)=\chi_{k_a}^{\mu^{[a]}}$. Writing symbolically a character $\chi\in\mathcal{A}_{n+1}$ as the collection $(\chi(t_1),\dots,\chi(t_{n+1}))$, we have
\[\chi^{\mu^{[a]}}_1=(\underbrace{\xi_1,\dots,\xi_1}_{\mu_1},\dots,\underbrace{\xi_a,\dots,\xi_a}_{\mu_a+1},\dots,\underbrace{\xi_d\dots,\xi_d}_{\mu_d})\quad\ \ \ \ \text{and}\quad\ \ \ \ \chi^{\mu^{[a]}}_{k_a}=(\chi^{\mu}_k,\xi_a)\ ,\]
so that  the last occurrence of $\xi_a$ in $\chi^{\mu^{[a]}}_1$ is in position $\mu_1+\dots+\mu_a+1$. Also, $\chi_1^{\mu}$ is obtained from $\chi^{\mu^{[a]}}_1$ by removing this last $\xi_a$. It is thus straightforward to see that
\begin{equation}\label{form-pi}
\pi_{k_a,\mu^{[a]}}=\pi_{k,\mu}\cdot(\mu_1+\dots+\mu_a+1,\dots,n,n+1)^{-1}\ .
\end{equation}
In the remaining of the paper, to simplify notations,  we will often write $\pi_k=\pi_{k,\mu}$ if there is no ambiguity on the choice of $\mu$ and also $\pi_{k_a}:=\pi_{k_a,\mu^{[a]}}$, for any $k\in\{1,\dots,m_{\mu}\}$ and $a\in\{1,\dots,d\}$.

\paragraph{Inclusion of matrix algebras.} The successive compositions of the isomorphism $\Phi_n$, the natural embedding of $Y_{d,n}$ in  $Y_{d,n+1}$ and the isomorphism $\Psi_{n+1}=\Phi_{n+1}^{-1}$ gives the embedding $\iota$ of the following diagram:
\[
\begin{array}{ccc}
 Y_{d,n+1} & \xrightarrow{\ \ \Psi_{n+1}\ \ } &\displaystyle\bigoplus_{\mu \in \Comp_d (n+1) }\Mat_{m_{\mu}} (\cH^{\mu})\\
 \bigcup & & \left\uparrow \rule{0pt}{0.5cm}\right.\iota\\
 Y_{d,n} & \xleftarrow{\ \ \Phi_{n}\ \ } &\displaystyle\bigoplus_{\mu \in \Comp_d (n) }\Mat_{m_{\mu}} (\cH^{\mu})
\end{array}
\]

In the formula below, an element $x\in\cH^{\mu}$ is also seen as an element of $\cH^{\mu^{[a]}}$, for any $a\in\{1,\dots,d\}$, via the natural embeddings recalled above. We keep the same notation $x$.
\begin{Prop}\label{prop-inc}
The embedding $\iota$ is given by $\iota=\bigoplus_{\mu \in \Comp_d (n)}\iota_{\mu}$, where the injective morphisms $\iota_{\mu}$ are given by:
\[\begin{array}{cccc}
\iota_{\mu}\ : & \Mat_{m_{\mu}} (\cH^{\mu}) & \to &\displaystyle\bigoplus_{1\leq a \leq d}\Mat_{m_{\mu^{[a]}}} (\cH^{\mu^{[a]}}) \\
 & x\,\M_{i,j} & \mapsto & \displaystyle\sum_{1 \leq a \leq d}\ x\,\M_{i_a,j_a}
 \end{array},\]
for any $\mu \in \Comp_d (n)$, any $x\in\cH^{\mu}$ and any $i,j\in\{1,\dots,m_{\mu}\}$.
\end{Prop}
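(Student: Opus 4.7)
The statement is linear on both sides, so it suffices to verify it on the basis elements $\tT_w\,\M_{i,j}$ of $\Mat_{m_\mu}(\cH^\mu)$ where $\mu\in\Comp_d(n)$, $w\in\mS^\mu$, and $i,j\in\{1,\dots,m_\mu\}$. The plan is to compute $\Psi_{n+1}\circ\Phi_n$ on such a basis element and match the result with the right-hand side of the claimed formula.

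First, by the definition (\ref{def-phi}) of $\Phi_\mu$ and Lemma \ref{l1}(ii), together with the equality $\pi_{i,\mu} w \pi_{j,\mu}^{-1}(\chi_j^\mu) = \chi_i^\mu$ (which holds because $w\in\mS^\mu$ stabilises $\chi_1^\mu$), we have
$$\Phi_\mu(\tT_w\,\M_{i,j}) = E_{\chi_i^\mu}\,\tg_{\pi_{i,\mu} w \pi_{j,\mu}^{-1}}\,E_{\chi_j^\mu} = E_{\chi_i^\mu}\,\tg_{\pi_{i,\mu} w \pi_{j,\mu}^{-1}}\,.$$
Viewing this element in $Y_{d,n+1}$ and applying the decomposition (\ref{inc-basis}) of $E_{\chi_i^\mu}$, it equals
$$\sum_{1\leq a\leq d} E_{\chi_{i_a}^{\mu^{[a]}}}\,\tg_{\pi_{i,\mu} w \pi_{j,\mu}^{-1}}\,,$$
with the $a$-th summand lying in $E_{\mu^{[a]}}Y_{d,n+1}$.

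We can therefore apply $\Psi_{\mu^{[a]}}$ term by term. For this we must identify the index $j'\in\{1,\dots,m_{\mu^{[a]}}\}$ characterised by $(\pi_{i,\mu} w \pi_{j,\mu}^{-1})(\chi_{j'}^{\mu^{[a]}}) = \chi_{i_a}^{\mu^{[a]}}$; since $\pi_{i,\mu} w \pi_{j,\mu}^{-1}\in\mS_n$ fixes $n+1$ and hence preserves the value of any character at $t_{n+1}$, while its action on $\operatorname{Irr}(\mathcal{A}_n)$ sends $\chi_j^\mu$ to $\chi_i^\mu$, one finds $j'=j_a$. By definition (\ref{def-psi}) of $\Psi_{\mu^{[a]}}$,
$$\Psi_{\mu^{[a]}}\!\bigl(E_{\chi_{i_a}^{\mu^{[a]}}}\,\tg_{\pi_{i,\mu} w \pi_{j,\mu}^{-1}}\bigr) = \tT_{\pi_{i_a}^{-1}\,\pi_{i,\mu} w \pi_{j,\mu}^{-1}\,\pi_{j_a}}\,\M_{i_a,j_a}\,.$$

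Finally, applying (\ref{form-pi}) one has $\pi_{i_a}=\pi_{i,\mu}\,\sigma_a^{-1}$ and $\pi_{j_a}=\pi_{j,\mu}\,\sigma_a^{-1}$ with $\sigma_a:=(\mu_1+\dots+\mu_a+1,\dots,n,n+1)$, so the telescoping
$$\pi_{i_a}^{-1}\,\pi_{i,\mu} w \pi_{j,\mu}^{-1}\,\pi_{j_a} \;=\; \sigma_a\,w\,\sigma_a^{-1}$$
holds. Comparing with (\ref{inc-Hmu}), this $\sigma_a\,w\,\sigma_a^{-1}$ is exactly the permutation describing the image of $\tT_w$ under the natural embedding $\cH^\mu\hookrightarrow\cH^{\mu^{[a]}}$. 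Hence the $a$-th summand is $\tT_w\,\M_{i_a,j_a}$ (with $\tT_w$ identified with its image in $\cH^{\mu^{[a]}}$), and summing over $a$ gives precisely the formula claimed for $\iota_\mu(\tT_w\,\M_{i,j})$.

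The whole argument is pure bookkeeping; the only potentially delicate points are the identification $j'=j_a$ (which relies on the fact that $\pi_{i,\mu} w \pi_{j,\mu}^{-1}$ fixes $n+1$) and the cancellation $\pi_{i_a}^{-1}\pi_{i,\mu}\,w\,\pi_{j,\mu}^{-1}\pi_{j_a}=\sigma_a\,w\,\sigma_a^{-1}$ provided by (\ref{form-pi}). No substantial obstacle is expected.
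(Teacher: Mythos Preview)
Your proof is correct and follows essentially the same approach as the paper's own argument: both rely on the decomposition (\ref{inc-basis}), the identification of the target index as $j_a$ (using that the permutation lies in $\mS_n$ and hence fixes the $(n{+}1)$-th coordinate), and the cancellation furnished by (\ref{form-pi}) combined with (\ref{inc-Hmu}). The only cosmetic difference is the starting point: the paper takes a basis element $E_{\chi_i^{\mu}}\tg_w$ of $Y_{d,n}$ and computes $\Phi_n^{-1}$ and $\Psi_{n+1}$ separately, whereas you start from $\tT_w\,\M_{i,j}\in\Mat_{m_\mu}(\cH^\mu)$ and compose $\Psi_{n+1}\circ\Phi_n$ directly.
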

\begin{proof}
 Let $E_{\chi_i^{\mu}}\tg_w$ be an element of the basis of $Y_{d,n}$, where $\mu\in\Comp_d(n)$, $i\in\{1,\dots,m_{\mu}\}$ and $w\in\mS_n$. Let $j\in\{1,\dots,m_{\mu}\}$ be uniquely determined by $w(\chi_i^{\mu})=\chi_j^{\mu}$. We have
 \[\Phi_n^{-1}(E_{\chi_i^{\mu}}\tg_w)=\tT_{\pi_i^{-1} w \pi^{}_j}\,\M_{i,j}\ .\]
 
On the other hand, we have, using (\ref{inc-basis}),
 $$\begin{array}{rcl}
 \Psi_{n+1}(E_{\chi_i^{\mu}}\,\tg_w)&=&\displaystyle \sum_{1\leq a \leq d}\Psi_{n+1}(E_{\chi_{i_a}^{\mu^{[a]}}}\,\tg_w)\\[1.4em]
 &=&\displaystyle\sum_{1\leq a \leq d}\tT_{\pi_{i_a}^{-1} w \pi_{j_a}}\,\M_{i_a,j_a}\ ,
 \end{array}$$
since, for any $a\in\{1,\dots,d\}$, the integer $j_a\in\{1,\dots,m_{\mu^{[a]}}\}$ is indeed such that $w(\chi_{j_a}^{\mu^{[a]}})=\chi_{i_a}^{\mu^{[a]}}$ (as $w\in\mS_n$).  
So we only have to check that $\tT_{\pi_{i_a}^{-1} w \pi^{}_{j_a}}$ is the image in $\cH^{\mu^{[a]}}$ of $\tT_{\pi_i^{-1} w \pi^{}_j}\in\cH^{\mu}$ under the natural embedding. This is an immediate consequence of (\ref{inc-Hmu}) and (\ref{form-pi}).
\end{proof}

\section{Applications to representation theory}\label{sec-rep}

This section presents the first applications of the isomorphism theorem described in the preceding section.
First we study the consequences on the representation theory of $Y_{d,n} $ and then we concentrate on the symmetric structure of this algebra. 
\subsection{Simple modules}\label{simple}
The case of an isomorphism between an algebra and a matrix algebra is a classical example of Morita equivalence which will be discussed in the next subsection. In fact, in this case,  the equivalence  is explicit. In particular,  due to the isomorphism, any simple $Y_{d,n}$-modules is of the form

$$(M_1\otimes \ldots \otimes M_d)^{m_{\mu}}\ ,$$ 
where $\mu=(\mu_1,\ldots,\mu_d)$ is a $d$-composition of $n$ and $M_a$ is a simple module of $\mathcal{H}_{\mu_a}$, for each $a=1,\ldots,d$ (see for example \cite[\S 17.B]{lam}).

\vskip .1cm
As the representation theory of the Iwahori--Hecke algebra is quite well understood (at least in characteristic $0$, see for example 
 \cite[ch. 8,9,10]{GP} for the semisimple case and \cite{GJ} for the modular case), we can deduce 
 from Theorem  \ref{main} the following results:
 \begin{itemize}
 \item Let $\theta : A \to k$ be a specialization to a field $k$ such that $\theta (u^2)=1$ and $\theta (v)=q-q^{-1}$ 
  for an element in $q\in k^{\times}$. Let $k_{\theta} Y_{d,n} $ be the specialized algebra 
  then $k_{\theta} Y_{d,n} $  is split semisimple if and only if  for all ${\mu \in \operatorname{Comp}_d (n) }$, the algebra  $k_{\theta} \mathcal{H}^{\mu} $
  is split semisimple. By \cite[Ex. 3.1.19]{GJ}, this happens if and only if :
 $$ \prod_{1\leq m \leq n} (1+ q^2+\ldots +  q^{2m-2})\neq 0$$
 we thus recover the semisimplicity criterion found in \cite[\S 6]{CPA1}. 
 \item The simple  $k_{\theta} Y_{d,n} (q)$-modules are  naturally labelled by the set of $d$-partitions of rank $n$ when the algebra is split semisimple. Moreover, in the non semisimple case, if we set 
 $$e:=\textrm{min} \{ i>0\ |\ 1+q^2+\ldots +q^{2i-2} =0\}$$
 then the simple modules are labelled by  the $d$-tuples of partitions such that each partition is $e$-regular. 
 \item The irreducible characters are completely determined by the irreducible characters of the Iwahori--Hecke algebra of type $A$. 
 For $M\in  \operatorname{Irr} ( k_{\theta}  \mathcal{H}^{\mu})$ with character $\chi_M$, 
   the  character of the simple $k_{\theta} Y_{d,n}$-module $(M)^{m_{\mu}}$ is given by :
   $$\chi (h)= \chi_M \circ \textrm{Tr}_{\operatorname{Mat}_{m_{\mu}}} \circ \Psi_{\mu} (h)\ ,\ \ \ \ \ \ \ \text{for $h\in k_{\theta}Y_{d,n}$,}$$
 where $\textrm{Tr}_{\operatorname{Mat}_{m_{\mu}}}$ is the usual  trace function on the matrix algebra.  
   In particular,  
 the decomposition matrices of the Yokonuma--Hecke algebra are entirely determined by the decomposition matrices of the Iwahori--Hecke algebra of type $A$. 
 \end{itemize}

\subsection{A Morita equivalence}

 From Theorem \ref{main}, we can thus deduce (see for example \cite[Ch. 17]{lam})  a Morita equivalence between the Yokonuma--Hecke algebra and a direct sum of Iwahori--Hecke algebras of type $A$ over any ring.
\begin{Prop}
Let $R$ be a commutative ring and $\theta :\C[u^{\pm1},v] \to R$ be a specialization. Then 
the algebra $R_{\theta}Y_{d,n}$  is Morita equivalent to $\bigoplus_{\mu \in \operatorname{Comp}_d (n) } R_{\theta}\mathcal{H}^{\mu} $.
\end{Prop}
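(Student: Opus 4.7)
The plan is to combine two standard facts: the isomorphism theorem (Theorem \ref{main}) base-changed along $\theta$, and the classical Morita equivalence between a ring and a matrix algebra over it.

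First, I would apply the specialization $\theta$ to the isomorphism of Theorem \ref{main}. Since $\Phi_n$ is an isomorphism of $\C[u^{\pm1},v]$-algebras, tensoring both sides with $R$ over $\C[u^{\pm1},v]$ via $\theta$ produces an isomorphism
\[
R_{\theta}Y_{d,n}\ \cong\ \bigoplus_{\mu \in \operatorname{Comp}_d (n)} R_{\theta}\bigl(\operatorname{Mat}_{m_{\mu}}(\mathcal{H}^{\mu})\bigr)\ \cong\ \bigoplus_{\mu \in \operatorname{Comp}_d (n)} \operatorname{Mat}_{m_{\mu}}(R_{\theta}\mathcal{H}^{\mu})\ ,
\]
where the second isomorphism is the standard compatibility of matrix algebras with base change (since $\operatorname{Mat}_m(A)\cong A\otimes_R\operatorname{Mat}_m(R)$ as $R$-algebras for any $R$-algebra $A$, and $\operatorname{Mat}_m(\mathcal{H}^{\mu})$ is free over $\C[u^{\pm1},v]$).

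Next, I would invoke the classical fact that for any ring $S$ and any integer $m\geq 1$, the matrix algebra $\operatorname{Mat}_m(S)$ is Morita equivalent to $S$ (see, e.g., \cite[Ch.~17]{lam}); the equivalence is realized by the bimodules $S^m$ and its dual, and the equivalence of categories sends a left $S$-module $M$ to $M^{\oplus m}$. Applying this componentwise to each summand $\operatorname{Mat}_{m_{\mu}}(R_{\theta}\mathcal{H}^{\mu})$, and using the elementary fact that Morita equivalence is compatible with finite direct sums of algebras (the module categories of a direct sum decompose into a product of the module categories of the summands), I obtain a Morita equivalence
\[
\bigoplus_{\mu \in \operatorname{Comp}_d (n)} \operatorname{Mat}_{m_{\mu}}(R_{\theta}\mathcal{H}^{\mu})\ \sim_{\text{Morita}}\ \bigoplus_{\mu \in \operatorname{Comp}_d (n)} R_{\theta}\mathcal{H}^{\mu}\ .
\]

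Combining the algebra isomorphism of the first step with the Morita equivalence of the second step (and using that Morita equivalence is transported by algebra isomorphisms) yields the desired conclusion. No step is a serious obstacle: the only points requiring attention are the base-change compatibility of matrix algebras (clear since everything is free over $\C[u^{\pm1},v]$) and the handling of the direct sum, both of which are standard.
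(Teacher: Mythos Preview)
Your proposal is correct and matches the paper's own argument, which simply deduces the result from Theorem~\ref{main} together with the standard Morita equivalence between a ring and a matrix algebra over it (citing \cite[Ch.~17]{lam}). The only additional detail you spell out---base change of the isomorphism and compatibility of matrix algebras with tensor products---is routine and implicit in the paper.
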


 In addition, consider the Hecke algebra of the complex reflection group $G(d,1,n)$ (also known as Ariki--Koike algebra). Let $R$ be a commutative ring with unit and let 
 ${\bf Q}:=(Q_0,\ldots,Q_{d-1} )\in R^d$ and $x\in R^{\times}$.  The Hecke algebra of $G(d,1,n)$ is the  $R$-algebra $\mathcal{H}_n^{{\bf Q},x}$ with generators 
 $$T_0,T_1, \ldots, T_{n-1},$$
 and relations :
\begin{align*}
& T_{i}T_{i+1}T_{i}=T_{i+1}T_{i}T_{i+1}\ (i=1,\ldots ,n-2), \\
& T_{i}T_{j}=T_{j}T_{i}\ (|j-i|>1), \\
& (T_{i}-x)(T_{i}+x^{-1})=0\ (i=1,\ldots ,n-1).\\
&(T_0-Q_0)(T_0-Q_1)\ldots (T_0-Q_{d-1})=0
\end{align*}
\begin{Rem}
Note that  if $d=1$ and $R=\mathbb{C}[x,x^{-1}] $ then $\mathcal{H}_n^{{\bf Q},x}$ is nothing but the Iwahori--Hecke algebra of type $A_{n-1}$ with parameter $x$  of Remark \ref{RemA}. 
 \hfill$\triangle$
\end{Rem}
Now assume in addition that for all $a\neq b$ and $-n<i<n$ the element $x^{2i} Q_a-Q_b$ is an invertible element of $R$. 
By \cite{Dima,Durui}, over $R$, $\mathcal{H}_n^{{\bf Q},x}$ 
  is Morita equivalent to $\bigoplus_{\mu \in \operatorname{Comp}_d (n) }R_{\theta} \mathcal{H}^{\mu} $. 
 We thus deduce the following result. 
\begin{Cor}
Under the above hypothesis, assume that  $\theta :A \to R$  is a specialization such that $\theta (u^2)=1$ and $\theta (v)=x-x^{-1}$  then
$R_{\theta}Y_{d,n} $ 
is Morita equivalent to $\mathcal{H}_n^{{\bf Q},x} $.
\end{Cor}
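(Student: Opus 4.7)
The plan is to obtain the corollary as a direct transitivity argument: Morita equivalence is an equivalence relation on algebras, so it suffices to exhibit a common algebra that is Morita equivalent to both $R_{\theta}Y_{d,n}$ and $\mathcal{H}_n^{{\bf Q},x}$. The natural candidate, pointed at by both the preceding Proposition and the announced result of \cite{Dima,Durui}, is the direct sum $\bigoplus_{\mu\in\Comp_d(n)} R_{\theta}\cH^{\mu}$ of tensor products of Iwahori--Hecke algebras of type $A$.

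First, I would invoke the previous Proposition, which is itself a direct consequence of the isomorphism Theorem \ref{main}: specialising the isomorphism $\Phi_n$ along $\theta$ (which is legitimate because the isomorphism is defined over $\C[u^{\pm 1},v]$), one gets $R_{\theta}Y_{d,n}\simeq\bigoplus_{\mu}\Mat_{m_{\mu}}(R_{\theta}\cH^{\mu})$, and a matrix algebra over an algebra $B$ is Morita equivalent to $B$. Hence $R_{\theta}Y_{d,n}$ is Morita equivalent to $\bigoplus_{\mu\in\Comp_d(n)}R_{\theta}\cH^{\mu}$.

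Second, I would check that the specialisation $\theta(u^2)=1$, $\theta(v)=x-x^{-1}$ makes the quadratic relation $T_i^2=u^2+vT_i$ collapse precisely to $(T_i-x)(T_i+x^{-1})=0$, so that $R_{\theta}\cH^{\mu}\simeq R\cH_{\mu_1,x}\otimes\cdots\otimes R\cH_{\mu_d,x}$, where $R\cH_{m,x}$ is the Iwahori--Hecke algebra of type $A_{m-1}$ over $R$ with parameter $x$ (cf.\ Remark \ref{RemA}). Under the separation hypothesis that $x^{2i}Q_a-Q_b$ is invertible in $R$ for $a\neq b$ and $-n<i<n$, the results of \cite{Dima,Durui} assert exactly that $\mathcal{H}_n^{{\bf Q},x}$ is Morita equivalent to $\bigoplus_{\mu\in\Comp_d(n)}R\cH_{\mu_1,x}\otimes\cdots\otimes R\cH_{\mu_d,x}$, which is the same direct sum.

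Finally, transitivity of Morita equivalence yields that $R_{\theta}Y_{d,n}$ is Morita equivalent to $\mathcal{H}_n^{{\bf Q},x}$. There is no genuine obstacle here beyond the bookkeeping of the two specialisations: the only thing one has to verify carefully is that the parameter $x$ on both sides refers to the same Iwahori--Hecke presentation, which is exactly the content of the comparison of quadratic relations above. Everything else (stability of the isomorphism $\Phi_n$ under base change, and the cited external Morita equivalence for Ariki--Koike algebras) is invoked as a black box.
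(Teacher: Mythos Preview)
Your proposal is correct and is exactly the argument the paper has in mind: the corollary is stated without proof, as an immediate consequence (``We thus deduce the following result'') of the preceding Proposition combined with the cited Morita equivalence of \cite{Dima,Durui} via transitivity through $\bigoplus_{\mu}R_{\theta}\cH^{\mu}$. Your additional check that the specialisation $\theta(u^2)=1$, $\theta(v)=x-x^{-1}$ yields the quadratic relation $(T_i-x)(T_i+x^{-1})=0$ is the only bookkeeping needed, and you have it right.
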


\subsection{Symmetrizing form and Schur elements}

We now study the symmetric structure of the Yokonuma--Hecke algebra. The algebra $Y_{d,n}$ is symmetric and thus it has a symmetrizing form which controls part of its representation theory.  We will in particular recover results obtained in \cite{CPA1} concerning the symmetric structure of  $Y_{d,n}$. In fact, the isomorphism theorem will also give an explanation and a  new  interpretation of these results.  

\paragraph{Preliminaries on symmetric algebras.} We recall that a symmetric algebra $H$ over a commutative ring $R$ is an $R$-algebra  equipped with a trace function 
$$\utau: H\to R$$
such that the bilinear form 
$$\begin{array}{rcl}
H\times H & \to & R \\
(h_1,h_2) & \mapsto & \utau (h_1 h_2)
\end{array}$$
is non-degenerate. We refer to \cite[Ch. 7]{GP} for a study of the properties of the symmetric algebras. In particular, if $K$ is a field containing $R$ and such that $KH$ is split semisimple, then for all 
 ${V\in \textrm{Irr} (KH)}$, there exists  $s_{V}$ in the integral closure of $R$ in $K$ such that 
 $$\utau=\sum_{\chi\in \textrm{Irr} (H)} (1/s_{V})\,\chi_V\ ,$$
where $\utau$ is extended to $KH$ and $\chi_V$ is the character associated to $V$.  The elements $s_{V}$ are called the Schur elements associated to $\utau$ and they are known to control part of the representation theory of $H$. We will use the following general result.
\begin{lemma}\label{lem-sym}
\begin{itemize}
\item[{\rm (i)}]  Let $N\in \mathbb{Z}_{>0}$. The algebra 
 $\operatorname{Mat}_N (H)$ is a symmetric algebra with symmetrizing form
$\utau^{\operatorname{mat}}:=\utau \circ \operatorname{Tr}_{\operatorname{Mat}_N}$\,,
where $\operatorname{Tr}_{\operatorname{Mat}_N}$ is the usual trace function on $\operatorname{Mat}_N (H)$.

\item[{\rm (ii)}] Let $M$ be a simple $KH$-module and $s_M$ its Schur element associated to $\utau$. Then the Schur element $s_M^{\operatorname{mat}}$ of the simple $\Mat_N(KH)$-module $M^N$ associated to $\utau^{\operatorname{mat}}$ is equal to $s_M$.
\end{itemize}
\end{lemma}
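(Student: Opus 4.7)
The plan for part (i) is to verify the two defining properties of a symmetrizing form: the trace property and non-degeneracy. Writing $A=\sum_{i,j}a_{ij}\M_{i,j}$ and $B=\sum_{i,j}b_{ij}\M_{i,j}$ with $a_{ij},b_{ij}\in H$, a direct computation gives $(AB)_{ii}=\sum_k a_{ik}b_{ki}$, and hence
\[
\utau^{\operatorname{mat}}(AB)=\sum_{i,k}\utau(a_{ik}b_{ki})\ .
\]
Applying the trace property of $\utau$ on $H$ inside the sum, $\utau(a_{ik}b_{ki})=\utau(b_{ki}a_{ik})$, and relabelling the dummy indices yields $\utau^{\operatorname{mat}}(AB)=\utau^{\operatorname{mat}}(BA)$. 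For non-degeneracy, suppose $A$ satisfies $\utau^{\operatorname{mat}}(AB)=0$ for every $B\in\Mat_N(H)$. Testing against $B=b\,\M_{j,i}$ for arbitrary $b\in H$ and arbitrary fixed indices $i,j$, the product $AB=\sum_k a_{kj}b\,\M_{k,i}$ has diagonal contribution only from $k=i$, so $\utau^{\operatorname{mat}}(AB)=\utau(a_{ij}b)$. Since $b$ is arbitrary and $\utau$ is non-degenerate on $H$, we deduce $a_{ij}=0$ for all $i,j$, proving $A=0$.

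For part (ii), the key is to identify the character of the simple $\Mat_N(KH)$-module $M^N$ in terms of the character of $M$. Viewing $M^N$ as column vectors $(m_1,\dots,m_N)^{T}$ with $m_k\in M$, the matrix $A=\sum a_{ij}\M_{i,j}$ acts by $(Am)_i=\sum_j a_{ij}\cdot m_j$, so its trace on $M^N$ only picks up contributions from the diagonal entries $a_{ii}$. This gives
\[
\chi_{M^N}(A)=\sum_{i=1}^N\chi_M(a_{ii})=\chi_M\bigl(\operatorname{Tr}_{\operatorname{Mat}_N}(A)\bigr)\ ,
\]
i.e.\ $\chi_{M^N}=\chi_M\circ\operatorname{Tr}_{\operatorname{Mat}_N}$. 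Substituting the defining expansion $\utau=\sum_{M\in\operatorname{Irr}(KH)}(1/s_M)\chi_M$ into the definition of $\utau^{\operatorname{mat}}$ and using linearity of $\operatorname{Tr}_{\operatorname{Mat}_N}$, we obtain
\[
\utau^{\operatorname{mat}}=\sum_{M\in\operatorname{Irr}(KH)}(1/s_M)\,\chi_{M^N}\ .
\]
Since $\{M^N\mid M\in\operatorname{Irr}(KH)\}$ is a complete set of representatives of simple $\Mat_N(KH)$-modules and the characters of distinct simple modules are linearly independent, comparison with the analogous expansion $\utau^{\operatorname{mat}}=\sum_{M}(1/s_M^{\operatorname{mat}})\chi_{M^N}$ forces $s_M^{\operatorname{mat}}=s_M$.

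The two arguments are essentially routine once one sets up the bookkeeping for matrix units correctly; the only mild pitfall is ensuring that the identification $\chi_{M^N}=\chi_M\circ\operatorname{Tr}_{\operatorname{Mat}_N}$ is the correct one and does not, for example, acquire an extra factor of $N$. The convention used here (row/column action via $\M_{i,j}\cdot m_k=\delta_{jk}m_i$) produces no such factor, making the identity $s_M^{\operatorname{mat}}=s_M$ clean.
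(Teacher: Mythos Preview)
Your proof is correct. Part (i) is essentially the same argument as the paper's, only more explicit: you verify the trace property by a direct entry computation (the paper simply declares it ``clearly a trace function'') and you test non-degeneracy against elementary matrices $b\,\M_{j,i}$ directly, while the paper first reduces to showing $\operatorname{Tr}_{\operatorname{Mat}_N}(b_1b_2)=0$ for all $b_2$ by pairing with scalar matrices $h\cdot\text{Id}_N$. These are the same idea.

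Part (ii) takes a genuinely different route. The paper uses the primitive-idempotent characterization of Schur elements: it picks a primitive idempotent $E_M\in KH$ for $M$, observes that $E_M\,\M_{1,1}$ is a primitive idempotent for $M^N$, and computes $\utau^{\operatorname{mat}}(E_M\,\M_{1,1})=\utau(E_M)$. You instead stay with the character-expansion definition given in the paper, establish $\chi_{M^N}=\chi_M\circ\operatorname{Tr}_{\operatorname{Mat}_N}$, and read off the coefficients. Your approach has the advantage of using only the definition stated in the text and avoids invoking the (standard but unstated) fact that $\utau(E_M)=1/s_M$; the paper's approach is shorter once that fact is assumed. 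Both are routine.
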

\begin{proof}
(i) The form $\utau^{\operatorname{mat}}$ is clearly a trace function. All we have to do is to check that this is non-degenerate. 
 Let $b_1\in   \operatorname{Mat}_N (H)$  and assume that for all $b_2\in \operatorname{Mat}_N (H)$, we have
  $\utau^{\operatorname{mat}} (b_1. b_2)=0$. Let $h\in H$ and  consider the element $b_3:=h.\textrm{Id}_N \in  \operatorname{Mat}_N (H)$ where $\textrm{Id}_N$ is the identity matrix in $\operatorname{Mat}_N (H)$. 
Then we have 
   $$\begin{array}{rcl}
    \utau^{\operatorname{mat}} (b_1. b_2  b_3)&=&\utau\circ  \operatorname{Tr}_{\operatorname{Mat}_N} (hb_1.b_2) \\
    &=& \utau (h.  \operatorname{Tr}_{\operatorname{Mat}_N} (b_1.b_2))
    \end{array}
$$
   As this element is zero for all $h\in H$ and as $\utau$ is a symmetrizing trace, we have $ \operatorname{Tr}_{\operatorname{Mat}_N} (b_1.b_2)=0$
 for all $b_2\in \operatorname{Mat}_N (H)$. This implies that $b_1=0$. 
 
 \vskip .1cm
 (ii) Let $E_M$ be a primitive idempotent of $KH$ associated to the simple module $M$. Then, by definition, we have $\utau(E_M)=1/s_M$. Now let $E_M^{\operatorname{mat}}\in\Mat_N(KH)$ be the matrix with $E_M$ in position $(1,1)$ and $0$ everywhere else. Then $E_M^{\operatorname{mat}}$ is a primitive idempotent of $\Mat_N(KH)$ associated to the simple module $M^N$. Thus, we calculate
 \[1/s_M^{\operatorname{mat}}=\utau^{\operatorname{mat}}(E_M^{\operatorname{mat}})=\utau\circ\operatorname{Tr}_{\operatorname{Mat}_N}(E_M^{\operatorname{mat}})=\utau(E_M)=1/s_M\ ,\]
which is the desired result.
\end{proof}

\paragraph{Symmetric structure of $Y_{d,n}$\,.}
The Iwahori--Hecke algebra of type $A$ is a symmetric algebra with symmetrizing form 
 $\utau_n: \mathcal{H}_n \to \C[u^{\pm1},v] $ given on the basis elements by 
$$\utau_n (\tT_w )=\left\{ \begin{array}{ll}
 1 & \text{if $w=1$,}\\
 0 & \text{otherwise.}
\end{array}\right.$$
This, in turn, implies the existence of a symmetrizing form for all $\mu=(\mu_1,\ldots, \mu_d)\in \operatorname{Comp}_d(n)$ on $\mathcal{H}^{\mu} $ by restriction:
 $\utau^{\mu}: \mathcal{H}^{\mu} \to \C[u^{\pm1},v]$. 
Seeing as usual $\mathcal{H}^{\mu}$ as $\cH_{\mu_1}\otimes\ldots\otimes\cH_{\mu_d}$, this linear form satisfies, for all $(w_1,\ldots,w_d)\in \mathfrak{S}_{\mu_1}  \times \ldots \times \mathfrak{S}_{\mu_d}$,  
\begin{equation}\label{tau-mu}
\utau^{\mu} (\tT_{w_1}\otimes\ldots\otimes\tT_{w_d})=\utau_{\mu_1} (\tT_{w_1})\ldots \utau_{\mu_d} (\tT_{w_d})\ .
\end{equation}
For any $n\geq1$ and $\lambda$ a partition of $n$, let $M_{\lambda}$ be the simple module of the split semisimple algebra $\C(u,v)\cH_n$. We denote by $s_{\lambda}:=s_{M_{\lambda}}$ the Schur element of $M_{\lambda}$ associated to $\utau$. We also set $s_{\emptyset}:=1$.

Now let $\ulambda=(\lambda^1,\ldots,\lambda^{d})$ be a $d$-tuple of partitions such that $\mu=(|\lambda^1|,\ldots,|\lambda^{d}|)$. Then $M_{\lambda_1}\otimes \ldots\otimes M_{\lambda_d}$ is a simple $\C(u,v)\cH^{\mu}$-module and, from (\ref{tau-mu}), its Schur element associated to $\utau^{\mu}$, denoted by $s_{\ulambda}$, is given by:
\begin{equation}\label{schur}
s_{\ulambda}=s_{\lambda^1}\ldots s_{\lambda^d}\ .
\end{equation}

Finally, from Lemma \ref{lem-sym}(i), we obtain a symmetrizing form on the algebra 
 $\bigoplus_{\mu \in \operatorname{Comp}_d (n)} \operatorname{Mat}_{m_{\mu}} (\mathcal{H}^{\mu})$ given by $\bigoplus_{\mu \in \operatorname{Comp}_d (n)} \utau^{\mu} \circ \operatorname{Tr}_{\operatorname{Mat}_{m_{\mu}}}$\ . Moreover, from Lemma \ref{lem-sym}(ii), the associated Schur element of the simple module indexed by a $d$-partition $\ulambda=(\lambda^1,\ldots,\lambda^{d})$ of $n$  is given by Formula (\ref{schur}).

\vskip .1cm
Let us come back to the Yokonuma--Hecke algebra $Y_{d,n}$. By the discussion in Subsection \ref{simple}, we know that the simple $\mathbb{C}(u,v)Y_{d,n}$-modules are labelled by the set of $d$-partitions of $n$. From the preceding discussion together with the isomorphism theorem (Theorem \ref{main}), we obtain naturally a symmetrizing  form on  $Y_{d,n}$ given explicitly by 
$$\urho_n:=\bigoplus_{\mu \in \operatorname{Comp}_d (n)} \utau^{\mu} \circ \operatorname{Tr}_{\operatorname{Mat}_{m_{\mu}}}\circ\Psi_{\mu}\ ,$$
and moreover, the associated Schur elements are given by Formula (\ref{schur}). We note that the symmetrizing form $\urho_n$ is also considered in \cite[\S 34]{lu}.

\paragraph{Alternative formula for $\urho_n$\,.}
In \cite{CPA1}, it is proved that the following formula defines a symmetrizing form $\widetilde{\urho}_n : Y_{d,n} \to \C[u^{\pm1},v] $ on $Y_{d,n}$:
$$\widetilde{\urho}_n (t_1^{a_1} \ldots t_n^{a_n} \tg_w )=\left\{ \begin{array}{ll}
 d^n & \text{ if $a_1=\ldots=a_n=1$ and $w=1$,}\\
 0 & \text{otherwise.}
\end{array}\right.$$
It turns out that the form $\widetilde{\urho}_n$ actually coincides with the natural symmetrizing form $\urho_n$.
 \begin{Prop}\label{prop-schur}
The form $\widetilde{\urho}_n$ coincides with the symmetrizing form $\urho_n$ on $Y_{d,n}$. 
 \end{Prop}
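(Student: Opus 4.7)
The plan is to verify the equality of the two forms by evaluating them on the $\C[u^{\pm 1},v]$-basis $\{E_{\chi_k^\mu}\tg_w\}$ of $Y_{d,n}$ from (\ref{Ekmu-basis}): I will show that both forms take the value $1$ when $w=1$ and $0$ otherwise, independently of $\mu$ and $k$.

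First, to compute $\urho_n(E_{\chi_k^\mu}\tg_w)$, I apply the explicit formula (\ref{def-psi}) for $\Psi_\mu$. Denoting by $j\in\{1,\dots,m_\mu\}$ the unique index with $w(\chi_j^\mu)=\chi_k^\mu$, we obtain
\[
\Psi_\mu(E_{\chi_k^\mu}\tg_w)=\tT_{\pi_k^{-1}w\pi_j}\,\M_{k,j}\ .
\]
The matrix trace of $\M_{k,j}$ vanishes unless $j=k$, which is equivalent to $w$ stabilising $\chi_k^\mu$. In that case, the stabiliser being $\pi_k\mS^\mu\pi_k^{-1}$, we have $\pi_k^{-1}w\pi_k\in\mS^\mu$, and by the defining formula of $\utau^\mu$ we get $\utau^\mu(\tT_{\pi_k^{-1}w\pi_k})=1$ exactly when $\pi_k^{-1}w\pi_k=1$, that is $w=1$. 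Hence $\urho_n(E_{\chi_k^\mu}\tg_w)=\delta_{w,1}$.

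For $\widetilde\urho_n$, I substitute the explicit expression (\ref{E-chi}) of the idempotent $E_\chi$ and use linearity, obtaining
\[
\widetilde\urho_n(E_\chi\tg_w) = \frac{1}{d^n}\sum_{s_1,\dots,s_n}\chi(t_1)^{s_1}\cdots\chi(t_n)^{s_n}\,\widetilde\urho_n\bigl(t_1^{-s_1}\cdots t_n^{-s_n}\tg_w\bigr)\ .
\]
By the defining formula of $\widetilde\urho_n$ on monomial basis elements, only the unique tuple $(s_1,\dots,s_n)$ for which $t_1^{-s_1}\cdots t_n^{-s_n}=1$ contributes, and then only when $w=1$; the factor $d^n$ in the defining formula cancels against the prefactor $1/d^n$, yielding $\widetilde\urho_n(E_\chi\tg_w)=\delta_{w,1}$. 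Comparing the two computations concludes the proof. There is no real obstacle here: the only piece of bookkeeping is the vanishing analysis of the matrix trace on the $\urho_n$ side, which is immediate from (\ref{def-psi}) together with the combinatorics of the distinguished coset representatives $\pi_{k,\mu}$.
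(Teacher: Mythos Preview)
Your proof is correct and follows essentially the same approach as the paper: both evaluate the two forms on the basis $\{E_{\chi_k^{\mu}}\tg_w\}$, compute $\urho_n$ via the explicit formula for $\Psi_\mu$ and the vanishing of the matrix trace off the diagonal, and compute $\widetilde\urho_n$ by expanding $E_\chi$ and observing that only the trivial monomial survives. The arguments are identical in substance.
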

 \begin{proof}
 We study the values taken by the two traces on the basis given by 
  $$\{ E_{\chi} \tg_w \ |\ \chi \in  \operatorname{Irr} (\mathcal{A}_n),\ w\in \mathfrak{S}_n\}.$$
 So let us fix $\mu\in \operatorname{Comp}_d (n)$, $k\in \{1,\ldots, m_{\mu}\}$ and $w\in \mathfrak{S}_n$. We have, using Formula (\ref{E-chi}) for $E_{\chi_k^{\mu}}$,
 $$\begin{array}{rcl}
\widetilde{\urho}_n (E_{\chi_k^{\mu}} \tg_w)&=&\widetilde{\urho}_n \Biggl(\displaystyle
  \Bigl(\prod_{1\leq i\leq n} \frac{1}{d} \sum_{0\leq s \leq d-1} \chi^{\mu}_{k} (t_i)^s\,t_i^{-s} \Bigr)\ \tg_w\Biggr)\\[1.5em]
   &=&\widetilde{\urho}_n \Bigl(\displaystyle \bigl(\prod_{1\leq i\leq n} \frac{1}{d}\,\bigr)\ \tg_w  \Bigr) \\[1.5em]
 &=&\left\{ 
 \begin{array}{ll}
 1 & \text{ if $w=1$\,,}\\
 0 & \text{ otherwise.}
 \end{array}\right. 
 \\ 
 \end{array}$$
 On the other hand we have 
  $$\begin{array}{rcl}
\urho_n (E_{\chi_k^{\mu}} \tg_w)&=&
\utau^{{\mu}  } \circ \operatorname{Tr}_{\operatorname{Mat}_{m_{\mu}}}\circ{\Psi}_{\mu} (E_{\chi_k^{\mu}}\tg_w)\\[0.5em]
&=&
\utau^{{\mu}  } \circ \operatorname{Tr}_{\operatorname{Mat}_{m_{\mu}}} (\tT_{\pi_{k,\mu}^{-1} w\pi_{j,\mu}}\,\M_{k,j})
 \end{array}$$
where $j\in \{1,\ldots,m_{\mu}\}$ satisfies $w (\chi_j^{\mu})=\chi_k^{\mu}$. We have $j=k$ if and only if $\pi_{k,\mu}^{-1} w\pi_{k,\mu}\in\mS^{\mu}$. 
  We obtain :
  $$\urho_n (E_{\chi_k^{\mu}} \tg_w) =\left\{ 
 \begin{array}{rr}
 1 & \text{ if $\pi_{k,\mu}^{-1} w\pi_{k,\mu}=1 \iff w=1$,}\\
 0 & \text{ otherwise.}
 \end{array}\right. 
$$
 and this concludes the proof. 
 \end{proof} 

\begin{Rem}
The Schur elements associated to $\widetilde{\urho}_n$ were obtained in \cite{CPA1} by a direct calculation. From Proposition \ref{prop-schur} and the discussion before it, we recover the result, namely that the Schur elements associated to $\widetilde{\urho}_n$ are given by Formula (\ref{schur}). Furthermore, we note that Proposition \ref{prop-schur} implies immediately the centrality and the non-degeneracy of $\widetilde{\urho}_n$ (which was also proved by direct calculations in \cite{CPA1}).
\end{Rem}

\section{Classification of Markov traces on Yokonuma--Hecke algebras}\label{sec-mark}

In this section, we use the isomorphism theorem to obtain a complete classification of Markov traces on $Y_{d,n}$. We use a definition of Markov traces analogous to the one in \cite[section 4.5]{GP} for the Iwahori--Hecke algebras of type $A$. From now on, we extend the ground ring to $\mathbb{C}[u^{\pm1},v^{\pm1}]$ and keep the same notations ($\cH_n,\cH^{\mu},Y_{d,n},...$) for the extended algebras.

\subsection{Markov traces on Iwahori--Hecke algebras of type A}

A Markov trace on the family of algebras $\{\mathcal{H}_n\}_{n\geq1}$ is a family of linear functions $\tau_n\ :\ \mathcal{H}_n\to \C[u^{\pm1},v^{\pm1}]$ ($n\geq1$) satisfying:
\begin{equation}\label{Markov-tau}
\begin{array}{lllr}
(\text{M}1) & \tau_n(xy)=\tau_n(yx)\ ,& \text{for $n\geq1$ and $x,y\in\mathcal{H}_n$;} & \text{\emph{(Trace condition)}}\\[0.2em]
(\text{M}2) & \tau_{n+1}(xT_n)=\tau_{n+1}(xT_n^{-1})=\tau_n(x)\ , & \text{for $n\geq1$ and $x\in\mathcal{H}_n$.} &\text{\emph{(Markov condition)}}
\end{array}
\end{equation}
It is a normalized Markov trace if it satisfies in addition
\begin{equation}\label{Markov-tau2}
\begin{array}{lllr}(\text{M}0) &\tau_1(1)=1\ . & & \text{\emph{(Normalization condition)}}
\end{array}
\end{equation}
In (M2) and in the following as well, we keep the same notation $x$ for an element of $\cH_n$ and the corresponding element of $\cH_{n+n'}$, $n'\geq1$, using the natural embedding of $\cH_n$ in $\cH_{n+n'}$.

\vskip .1cm
It is a classical result that a normalized Markov trace on $\{\mathcal{H}_n\}_{n\geq1}$ exists and is unique \cite[section 4.5]{GP}. From now on, $\{\tau_n\}_{n\geq1}$ will be this unique normalized Markov trace. For later use, we also set $\tau_0\ :\ \mathcal{H}_0:=\C[u^{\pm1},v^{\pm1}] \to \C[u^{\pm1},v^{\pm1}]$ to be the identity map on $\C[u^{\pm1},v^{\pm1}]$.

\vskip .1cm
As $T_n-u^2T_n^{-1}=v$ for any $n\geq1$, we have, using the Markov condition, that
\begin{equation}\label{rest-tau}
\tau_{n+1}(x)=v^{-1}(1-u^2)\tau_n(x)\ \ \ \ \ \text{for any $n\geq1$ and any $x\in\mathcal{H}_n$,}
\end{equation}
and by induction on $n$, using that $\tau_1(1)=1$, we obtain
\begin{equation}\label{tau1}
\tau_{n}(1)=\bigl(v^{-1}(1-u^2)\bigr)^{n-1}\ \ \ \ \ \text{for any $n\geq1$.}
\end{equation}

We will need later the following properties of the Markov trace $\{\tau_n\}_{n\geq1}$. For the second item, we recall that  $\cH^{\mu}\simeq \cH_{\mu_1}\otimes\dots\otimes\cH_{\mu_d}$  and these two algebras are identified. Recall also that this algebra is naturally embedded in $\cH_n$ for any $\mu=(\mu_1,\dots,\mu_d)\in\Comp_d(n)$. See (\ref{def-base-mu}) for the definition of $[\mu]$.
\begin{lemma}\label{lemme-tau-H}
\begin{enumerate}
\item[{\rm (i)}]  For any $n\geq 1$, we have:
\begin{equation}\label{tau-H-1}
\tau_{n+1}(xT_ky)=\tau_{n+1}(xT_k^{-1}y)=\tau_n(xy)\,,\ \ \ \ \ \text{for any $k\in\{1,\dots,n\}$ and $x,y\in\mathcal{H}_k$.}
\end{equation}
\item[{\rm (ii)}]   For any $n\geq1$ and any $\mu\in\Comp_d(n)$, we have:
\begin{equation}\label{tau-H-2}
\tau_{n}(x_1\otimes\dots\otimes x_d)=\bigl(v^{-1}(1-u^2)\bigr)^{\vert[\mu]\vert-1}\tau_{\mu_1}(x_1)\dots\tau_{\mu_d}(x_d)\,,\ \ \ \ \ \text{for any $x_1\otimes\dots\otimes x_d\in\mathcal{H}^{\mu}$.}
\end{equation}
\end{enumerate}
\end{lemma}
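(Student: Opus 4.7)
My plan is to prove (i) first by direct manipulation with the axioms (M1) and (M2), and then to deduce (ii) via an intermediate factorization lemma that will itself use (i) crucially.

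For (i), I use the trace property to write $\tau_{n+1}(xT_ky)=\tau_{n+1}(yxT_k)$, noting that $yxT_k\in\mathcal{H}_{k+1}$. Iterating (\ref{rest-tau}) from index $n+1$ down to $k+1$ contributes the factor $(v^{-1}(1-u^2))^{n-k}$, and the Markov condition then gives $\tau_{k+1}(yxT_k)=\tau_k(yx)=\tau_k(xy)$. Since $\tau_n(xy)$ unfolds via (\ref{rest-tau}) to exactly the same prefactor times $\tau_k(xy)$, the two sides match. The case of $T_k^{-1}$ is identical, replacing $\tau_{k+1}(\cdot T_k)$ by $\tau_{k+1}(\cdot T_k^{-1})$ in (M2).

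For (ii), the main step is the following factorization lemma: for $k\geq 0$, $m=n-k\geq 1$, $y\in\mathcal{H}_k$ and $z$ in the subalgebra of $\mathcal{H}_n$ generated by $T_{k+1},\dots,T_{n-1}$, writing $\widetilde{z}\in\mathcal{H}_m$ for the back-shift of $z$ under $T_{k+i}\leftrightarrow T_i$, one has
\[
\tau_n(yz)=v^{-1}(1-u^2)\,\tau_k(y)\,\tau_m(\widetilde{z}).
\]
Granting this, I first dispose of zero parts: if $\mu_a=0$, then $x_a\in\mathcal{H}_0$ is a scalar and can be pulled out of $\tau_n$, reducing the claim for $\mu$ to the claim for the composition obtained by removing the $a$-th part (the number of non-zero parts is unchanged and $\tau_0(x_a)=x_a$ matches). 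Under the assumption that all $\mu_a\geq 1$, an induction on $d$ applied with $y=x_1\otimes\cdots\otimes x_{d-1}\in\mathcal{H}^{(\mu_1,\dots,\mu_{d-1})}\subset\mathcal{H}_{n-\mu_d}$ and $z=\widehat{x}_d$ produces the prefactor $(v^{-1}(1-u^2))^{d-1}=(v^{-1}(1-u^2))^{|[\mu]|-1}$ together with the correct product of $\tau_{\mu_a}(x_a)$.

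The factorization lemma is proved by induction on $m$. The base $m=1$ has $z$ scalar and follows directly from (\ref{rest-tau}). For $m\geq 2$, by linearity I take $z=\tT_w$ with $w$ a permutation of $\{k+1,\dots,n\}$. If $w$ fixes $n$, then $yz\in\mathcal{H}_{n-1}$ and $\widetilde{z}\in\mathcal{H}_{m-1}$, so applying (\ref{rest-tau}) once on each side and invoking the inductive hypothesis at level $(n-1,m-1)$ produces matching extra factors of $v^{-1}(1-u^2)$. If $w$ does not fix $n$, I write $w=(s_j s_{j+1}\cdots s_{n-1})\,w'''$ with $j=w(n)$, $w'''$ fixing $n$, and $\ell(w)=(n-j)+\ell(w''')$, so that $\tT_w=\tT_j\tT_{j+1}\cdots\tT_{n-1}\tT_{w'''}$. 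Cyclic invariance of $\tau_n$ (together with the fact that $\tT_{w'''}$ commutes with $y$) brings the product to $y\tT_{w'''}\tT_j\cdots\tT_{n-1}$; the Markov condition applied to the terminal $\tT_{n-1}$ then gives $\tau_n(yz)=u^{-1}\tau_{n-1}(yz')$, where $z'=\tT_{w'''}\tT_j\cdots\tT_{n-2}$ lives in the rank-$(m-1)$ subalgebra of $\mathcal{H}_{n-1}$. The identical manipulation inside $\mathcal{H}_m$ yields $\tau_m(\widetilde{z})=u^{-1}\tau_{m-1}(\widetilde{z}')$, so the two $u^{-1}$ factors cancel when the inductive hypothesis is applied to $(y,z')$. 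The main technical point, and the reason for inducting on $m$ rather than on $\ell(w)$, is to ensure that one reduction step drops both the ambient algebra and its $z$-component by exactly one in rank; choosing the decomposition of $w$ so that the letter $n$ is evacuated by a single contiguous block $s_j\cdots s_{n-1}$ at the front is what makes this bookkeeping clean.
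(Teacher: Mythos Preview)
Your argument is correct and takes a genuinely different route from the paper in both parts.

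For (i), the paper runs an induction on $n-k$ using the braid identity $T_{k+1}T_k^{\pm1}T_{k+1}^{-1}=T_k^{-1}T_{k+1}^{\pm1}T_k$ together with centrality to push the problem down one step at a time. Your approach is more direct: cycle to get $yxT_k\in\mathcal{H}_{k+1}$, apply the stabilization relation (\ref{rest-tau}) $n-k$ times to reach level $k+1$, then apply (M2) once; the same stabilization unwinds $\tau_n(xy)$ and the prefactors cancel. This avoids the braid trick entirely.

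For (ii), the paper argues by induction on $n$: it treats $x=1$ separately via (\ref{tau1}), and for $x\neq1$ picks a rightmost nontrivial factor $x_a=h_1T_kh_2$ and invokes (i) to drop one strand. Your route is to isolate a clean two-block factorization lemma
\[
\tau_n(yz)=v^{-1}(1-u^2)\,\tau_k(y)\,\tau_m(\widetilde{z}),
\]
proved by induction on $m$ via the coset decomposition $w=(s_j\cdots s_{n-1})w'''$, and then iterate it across the tensor factors. This buys a conceptually transparent statement that explains the factor $(v^{-1}(1-u^2))^{|[\mu]|-1}$ as one copy per block-separation, at the price of a slightly longer auxiliary proof. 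Both approaches ultimately rest on (M1), (M2), and (\ref{rest-tau}).

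One small correction: your factorization lemma is stated for $k\geq 0$, but it fails at $k=0$ (take $y=z=1$, $n=m$: the left side is $\tau_n(1)$ while the right side is $v^{-1}(1-u^2)\tau_n(1)$). The proof of the base case $m=1$ uses (\ref{rest-tau}) in the form $\tau_{k+1}(y)=v^{-1}(1-u^2)\tau_k(y)$, which requires $k\geq 1$. This does not affect your deduction of (ii), since after stripping zero parts you only apply the lemma with $k=\mu_1+\dots+\mu_{d-1}\geq 1$; just amend the hypothesis to $k\geq 1$.
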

\begin{proof} (i)  Let $n\geq 1$, $k\in\{1,\dots,n\}$ and $x,y\in\mathcal{H}_k$. We proceed by induction on $n-k$. For $k=n$, Equation (\ref{tau-H-1}) follows directly from Conditions (M1) and (M2). Assume $k<n$. Then $T_{k+1}$ exists in $\cH_{n+1}$ and commutes with $x$ and $y$. By centrality of $\tau_{n+1}$, we have
\[\tau_{n+1}(xT_k^{\pm1}y)=\tau_{n+1}(xT^{}_{k+1}T_k^{\pm1}T_{k+1}^{-1}y)\ .\]
Using the braid relation  $T^{}_{k+1}T_k^{\pm1}T_{k+1}^{-1}=T_{k}^{-1}T_{k+1}^{\pm1}T^{}_{k}$ and the induction hypothesis, we conclude that
\[\tau_{n+1}(xT_k^{\pm1}y)=\tau_{n+1}(xT_{k}^{-1}T_{k+1}^{\pm1}T^{}_{k}y)=\tau_{n}(xT_{k}^{-1}T^{}_{k}y)=\tau_n(xy)\ .\]

(ii) Let $n\geq1$, $\mu\in\Comp_d(n)$ and $x=x_1\otimes\dots\otimes x_d\in\mathcal{H}^{\mu}$.  
First assume that $x=1$. We have, using (\ref{tau1}) and the convention $\tau_0(1)=1$,
\[\tau_{\mu_1}(1)\dots\tau_{\mu_d}(1)=\prod_{\mu_a\geq1}\bigl(v^{-1}(1-u^2)\bigr)^{\mu_a-1}=\bigl(v^{-1}(1-u^2)\bigr)^{n-\vert[\mu]\vert}\ ,\]
which yields, together with (\ref{tau1}), Formula (\ref{tau-H-2}) for $x=1$.

Let now $x\neq 1$. We proceed by induction on $n$ (the case $n=1$ being covered by the case $x=1$). Using that $x\neq 1$, we take $a\in\{1,\dots,d\}$ to be such that $x_{a+1}=\dots=x_d=1$ and $x_a=h_1T_kh_2\in\cH_{\mu_a}$ with $k\in\{1,\dots,\mu_a-1\}$ and $h_1,h_2\in\cH_{k}$ (in particular, $\mu_a\geq2$). We set $\nu=\mu_{[a]}\in\Comp_d(n-1)$ (that is, $\nu_a=\mu_a-1$ and $\nu_b=\mu_b$ if $b\neq a$). We calculate, using item (i),
\[\tau_n(x)=\tau_n(x_1\otimes\dots \otimes x_{a-1}\otimes h_1T_{k}h_2\otimes 1\dots\otimes 1) =\tau_{n-1}(x_1\otimes\dots \otimes x_{a-1} \otimes h_1h_2\otimes 1\dots\otimes 1)\ ;\]
using induction hypothesis, we then obtain
\[\tau_n(x)= \bigl(v^{-1}(1-u^2)\bigr)^{\vert[\nu]\vert-1}\tau_{\nu_1}(x_1)\dots\tau_{\nu_{a-1}}(x_{a-1})\tau_{\nu_a}(h_1h_2)\tau_{\nu_{a+1}}(1)\dots\tau_{\nu_d}(1)\ ;\]
finally, we have $[\nu]=[\mu]$, since $\mu_a\geq2$, and moreover, using item (i), $\tau_{\nu_a}(h_1h_2)=\tau_{\mu_a-1}(h_1h_2)=\tau_{\mu_a}(h_1T_kh_2)$. So we conclude that Formula (\ref{tau-H-2}) is satisfied.
\end{proof}

\subsection{Markov traces on Yokonuma--Hecke algebras}\label{subsec-Mark}

A Markov trace on the family of algebras $\{Y_{d,n}\}_{n\geq1}$ is a family of linear functions $\rho_n\ :\ Y_{d,n}\to \mathbb{C}[u^{\pm1},v^{\pm1}]$ ($n\geq1$) satisfying:
\begin{equation}\label{Markov-rho}
\begin{array}{lllr}
(\text{M}1) &\rho_n(xy)=\rho_n(yx)\ ,& \text{for any $n\geq1$ and $x,y\in Y_{d,n}$;} & \text{\emph{(Trace condition)}}\\[0.2em]
(\text{M}2) &\rho_{n+1}(xg_n)=\rho_{n+1}(xg_n^{-1})=\rho_n(x)\ , & \text{for any $n\geq1$ and $x\in Y_{d,n}$.} &\text{\emph{(Markov condition)}}
\end{array}
\end{equation}

\begin{Rem}\label{rem-norm}
For the Markov traces on the Iwahori--Hecke algebras, there is no loss of generality in considering the normalized Markov trace. Indeed, it is straightforward to see that if $\{\tau_n\}_{n\geq1}$ is a Markov trace on $\{\cH_n\}_{n\geq1}$ such that $\tau_1(1)=0$, then all the linear functions $\tau_n$ are identically 0. So we can assume that $\tau_1(1)\neq 0$ and normalize it so that $\tau_1(1)=1$. 

This remark is no longer valid for the Yokonuma--Hecke algebras, for which a Markov trace $\{\rho_n\}_{n\geq1}$ may satisfy $\rho_1(1)=0$ without being trivial (see the classification below). Therefore, we will work in the general setting of non-normalized Markov traces.\hfill$\triangle$
\end{Rem}

\vskip .1cm
Let $n\geq 1$ and let $\kappa$ be any linear function from $Y_{d,n}$ to $\mathbb{C}[u^{\pm1},v^{\pm1}]$ satisfying the trace condition $\kappa(xy)=\kappa(yx)$, $\forall x,y\in Y_{d,n}$. Recall the isomorphisms $\Phi_{\mu}$ and $\Psi_{\mu}=\Phi_{\mu}^{-1}$ between $\Mat_{m_{\mu}} (\cH_{\mu})$ and $E_{\mu}Y_{d,n}$ given by (\ref{def-phi})-(\ref{def-psi}). For each $\mu\in \Comp_d(n)$, the composed map $\kappa\circ\Phi_{\mu}$ is a linear map from $\Mat_{m_{\mu}} (\cH_{\mu})$ to $\mathbb{C}[u^{\pm1},v^{\pm1}]$ also satisfying the trace condition. As the usual trace of a matrix is the only trace function on a matrix algebra (up to normalization), the map $\kappa\circ\Phi_{\mu}$ is of the form:
\[\kappa\circ\Phi_{\mu}= \kappa^{\mu}\circ\operatorname{Tr}_{\Mat_{m_{\mu}}}\ ,\]
for some trace function $\kappa^{\mu}:\,\cH^{\mu}\to \mathbb{C}[u^{\pm1},v^{\pm1}]$.  In other words, we have
\[\kappa(x)=\sum_{\mu\in\Comp_d(n)}\kappa^{\mu}\circ\operatorname{Tr}_{\Mat_{m_{\mu}}}\circ\Psi_{\mu}(E_{\mu}x)\ ,\ \ \ \ \ \text{for $x\in Y_{d,n}$\,,}\]
(where we wrote $x=\sum_{\mu\in\Comp_d(n)}E_{\mu}x$) and we refer to the maps $\kappa^{\mu}$ as the trace functions associated to $\kappa$.

\paragraph{Classification of Markov traces on $\{Y_{d,n}\}_{n\geq1}$.}
We are now ready to give the classification of Markov traces on the Yokonuma--Hecke algebras $\{Y_{d,n}\}_{n\geq1}$ which is the main result of this section. We refer to Section \ref{sec-def}, (\ref{def-base-mu}) and (\ref{comp0}), for the definition of the base $[\mu]$ of a composition $\mu$ and of the set $\Comp_d^0$.

\begin{Th}\label{theo-mark}
A family $\{\rho_n\}_{n\geq1}$ of linear functions, $\rho_n\ :\ Y_{d,n}\to \mathbb{C}[u^{\pm1},v^{\pm1}]$, is a Markov trace on the family of algebras $\{Y_{d,n}\}_{n\geq1}$   if and only if there is a set of parameters $\{\alpha_{\mu^0}\,,\ \mu^0\in\Comp^0_d\}\subset \mathbb{C}[u^{\pm1},v^{\pm1}]$ such that 
\begin{equation}\label{rho-n}
\rho_n(x)=\sum_{\mu\in\Comp_d(n)}\rho^{\mu}\circ\operatorname{Tr}_{\Mat_{m_{\mu}}}\circ\Psi_{\mu}(E_{\mu}x)\,,\ \ \ \ \ \text{for $n\geq1$ and $x\in Y_{d,n}$,}
\end{equation}
where the associated trace functions $\rho^{\mu}\ :\ \cH^{\mu}\to \mathbb{C}[u^{\pm1},v^{\pm1}]$ are given by
\begin{equation}\label{rho-mu}
\rho^{\mu}=\alpha_{[\mu]}\cdot\tau_{\mu_1}\otimes\dots\otimes \tau_{\mu_d}\,,\ \ \ \ \ \text{for any $\mu\in\Comp_d(n)$.}
\end{equation}
\end{Th}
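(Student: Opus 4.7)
The plan is to transport the problem via Theorem \ref{main} to traces on the direct sum $\bigoplus_{\mu \in \Comp_d(n)} \Mat_{m_\mu}(\cH^\mu)$, where (M1) is handled by the classical structure of matrix-algebra traces and (M2) becomes a compatibility between the blocks of $Y_{d,n}$ and $Y_{d,n+1}$ that is resolved using the uniqueness of the Markov trace on Iwahori--Hecke algebras of type $A$. As a first step, given any family $\{\rho_n\}$ satisfying merely (M1), I would restrict $\rho_n$ to the two-sided ideal $E_\mu Y_{d,n}$ and transport it through $\Phi_\mu$: the resulting trace on $\Mat_{m_\mu}(\cH^\mu)$ factors uniquely as $\rho^\mu \circ \operatorname{Tr}_{\Mat_{m_\mu}}$ for some trace $\rho^\mu$ on $\cH^\mu$, by the standard structure of traces on a full matrix algebra over a ring. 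This directly yields the decomposition (\ref{rho-n}), independently of (M2).

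For the sufficient direction, assuming $\rho^\mu$ has the tensor-product form (\ref{rho-mu}), condition (M1) is immediate. For (M2), I would test it on basis elements $x = E_{\chi_k^\mu}\tg_w\in Y_{d,n}$: in $Y_{d,n+1}$, re-expand $E_{\chi_k^\mu}$ via (\ref{inc-basis}) and use $\tg_w\tg_n=\tg_{ws_n}$ (valid because $\ell(ws_n)>\ell(w)$ for $w\in\mS_n$), then transport each piece to the block $\Mat_{m_{\mu^{[a]}}}(\cH^{\mu^{[a]}})$ using the coset formula (\ref{form-pi}) for $\pi_{k_a,\mu^{[a]}}$. The matrix trace then selects only those $a$ for which $ws_n$ stabilizes $\chi_{k_a}^{\mu^{[a]}}$; a case analysis shows that surviving contributions are exactly those with $\mu_a \geq 1$ (equivalently $[\mu^{[a]}]=[\mu]$), and the extra factor $\tT_{s_n}$ (or its conjugate) is absorbed by Lemma \ref{lemme-tau-H}(i), which implements the Markov property of $\tau$ inside the $a$-th tensor factor of $\tau_{\mu_1}\otimes\dots\otimes\tau_{\mu_d}$. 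The second equality in (M2) would then follow from the quadratic relation together with a parallel computation of $\rho_{n+1}(xe_n)$, using the action of $e_n$ on the idempotents $E_{\chi_{k_a}^{\mu^{[a]}}}$ recalled in Lemma \ref{idem}.

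Conversely, given $\{\rho_n\}$ satisfying both (M1) and (M2), the trace functions $\rho^\mu$ obtained in the first step must satisfy the relations produced by (M2). Testing (M2) on elements in $E_{\chi_1^\mu}Y_{d,n}$—which by Lemma \ref{l2} corresponds to testing $\cH^\mu$ directly via the image of $\phi_\mu$—yields, by running the above matrix-algebra calculation in reverse, a Markov-type identity for $\rho^{\mu^{[a]}}$ restricted to the appropriate tensor factor of $\cH^{\mu^{[a]}}$. Since the Markov trace on each $\cH_m$ is unique up to scalar, induction on $n$ (and on $|[\mu]|$) forces $\rho^\mu = c_\mu\,\tau_{\mu_1}\otimes\dots\otimes\tau_{\mu_d}$ for some $c_\mu\in\C[u^{\pm1},v^{\pm1}]$; the relations coming from the transitions $\mu \to \mu^{[a]}$ with $\mu_a\geq1$ then give $c_{\mu^{[a]}} = c_\mu$, so that $c_\mu$ depends only on $[\mu]$ and can be set equal to $\alpha_{[\mu]}$.

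The principal technical hurdle is the case analysis in the sufficient direction: correctly tracking, from the explicit coset formulas, which block contributions survive the matrix trace and verifying that this survival matches exactly the base condition $[\mu^{[a]}]=[\mu]$, so that the Markov property of $\tau$ inside each factor assembles into the desired identity $\rho_{n+1}(xg_n)=\rho_n(x)$.
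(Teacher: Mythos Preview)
Your approach is essentially the same as the paper's: transport via the isomorphism, factor (M1) through the matrix trace to obtain traces $\rho^\mu$ on $\cH^\mu$, and translate (M2) into a recursive compatibility between $\rho^{\mu^{[a]}}$ and $\rho^\mu$ that is then resolved by induction using the Markov property of $\{\tau_m\}$ on the Iwahori--Hecke factors. The paper organizes the (M2) analysis into two preparatory lemmas (Lemmas~\ref{lemme1} and~\ref{lemme2}) before running the induction, but the substance is the same.

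One point needs correcting. In the sufficient direction, for $x=E_{\chi_k^\mu}\tg_w$, you write that the surviving contributions to $\rho_{n+1}(xg_n)$ after taking the matrix trace are ``exactly those with $\mu_a\geq1$''. That is not right: the stabilizer condition $ws_n(\chi_{k_a}^{\mu^{[a]}})=\chi_{k_a}^{\mu^{[a]}}$ forces $\chi_{k_a}^{\mu^{[a]}}(t_n)=\chi_{k_a}^{\mu^{[a]}}(t_{n+1})=\xi_a$, so the only term that can survive is the \emph{single} index $a=b$ determined by $\chi_k^\mu(t_n)=\xi_b$ (and it survives only when $w(\chi_k^\mu)=\chi_k^\mu$). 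This $b$ does satisfy $\mu_b\geq1$, which is what you actually need so that $[\mu^{[b]}]=[\mu]$ and hence $\alpha_{[\mu^{[b]}]}=\alpha_{[\mu]}$; but the other $a$ with $\mu_a\geq1$ contribute nothing. With this correction your absorption of the extra $T_{\mu_b}^{\pm1}$ via Lemma~\ref{lemme-tau-H}(i) goes through, and the rest of the argument matches the paper.
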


The remaining of this section is devoted to the proof of the Theorem.

\paragraph{Preliminary lemmas.}
Let $\{\rho_n\}_{n\geq1}$ be a family of linear functions $\rho_n\ :\ Y_{d,n}\to \mathbb{C}[u^{\pm1},v^{\pm1}]$ satisfying the trace condition (M1).

\begin{lemma}\label{lemme1}
The family $\{\rho_n\}_{n\geq1}$ satisfies the Markov condition (M2) if and only if the associated traces $\rho^{\mu}$ satisfy, for any $\mu\in\bigcup_{n\geq1}\Comp_d(n)$ and any $a\in\{1,\dots,d\}$ such that $\mu_a\geq1$,
\begin{equation}\label{mark-asso}
\begin{array}{rcl}
\rho^{\mu^{[a]}}(x_1\otimes\dots\otimes x_a T_{\mu_a}\otimes\dots\otimes x_d)&=&\rho^{\mu^{[a]}}(x_1\otimes\dots\otimes x_a T_{\mu_a}^{-1}\otimes\dots\otimes x_d)\\[0.4em]
&=&\rho^{\mu}(x_1\otimes\dots\otimes x_a \otimes\dots\otimes x_d),
\end{array}
\end{equation}
for any $x_1\otimes\dots\otimes x_d\in\cH^{\mu}$.
\end{lemma}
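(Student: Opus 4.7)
Plan: Prove the equivalence by translating (M2) through the isomorphism of Theorem \ref{main} and the inclusion description of Proposition \ref{prop-inc}, reducing it to a direct identity among the associated traces $\rho^{\mu}$.

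By linearity, I restrict attention to basis elements $x=\Phi_{\mu}(h\,\M_{i,j})$ for $\mu\in\Comp_d(n)$, $h=x_1\otimes\dots\otimes x_d\in\cH^{\mu}$, and $i,j\in\{1,\dots,m_{\mu}\}$. On such an $x$, the formula (\ref{rho-n}) together with $\operatorname{Tr}_{\Mat_{m_{\mu}}}(\M_{i,j})=\delta_{ij}$ immediately gives $\rho_n(x)=\delta_{ij}\,\rho^{\mu}(h)$. For the other side, I push $x$ into $Y_{d,n+1}$, apply $\Psi_{n+1}$, multiply by $\Psi_{n+1}(g_n)$ and take traces. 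Proposition \ref{prop-inc} identifies the image of $x$ in the larger sum as $\sum_a h\,\M_{i_a,j_a}$, while $\Psi_{\mu^{[a]}}(g_n)=u\,\Psi_{\mu^{[a]}}(\tg_n)$ is computable explicitly from the action of $s_n$ on the characters of $\mathcal{A}_{n+1}$. A short check on characters shows that the diagonal survival condition in the trace forces both $i=j$ and $\chi^{\mu}_i(t_n)=\xi_a$, so only the unique index $a^{*}=a^{*}(i)$ with $\chi^{\mu}_i(t_n)=\xi_{a^{*}}$ contributes. The resulting formula reads
\[\rho_{n+1}(x\,g_n)\;=\;\delta_{ij}\,u\,\rho^{\mu^{[a^{*}]}}\bigl(h\,\tT_{\pi_{i_{a^{*}}}^{-1}\,s_n\,\pi_{i_{a^{*}}}}\bigr),\]
and an analogous identity holds for $g_n^{-1}$.

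Using the explicit relation (\ref{form-pi}) between $\pi_{i_{a^{*}}}$ and $\pi_i$, a conjugation computation shows that $\pi_{i_{a^{*}}}^{-1}\,s_n\,\pi_{i_{a^{*}}}$ is the transposition $(n',\,\mu_1+\dots+\mu_{a^{*}}+1)$ inside the $a^{*}$-th factor $\mS_{\mu_{a^{*}}+1}$ of $\mS^{\mu^{[a^{*}]}}$, where $n'=\pi_i^{-1}(n)$ ranges over $\{\mu_1+\dots+\mu_{a^{*}-1}+1,\dots,\mu_1+\dots+\mu_{a^{*}}\}$ as $i$ varies over the indices with $\chi_i^{\mu}(t_n)=\xi_{a^{*}}$. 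For the direction (M2)$\Rightarrow$(\ref{mark-asso}) I specialize $i$ so that $n'=\mu_1+\dots+\mu_{a^{*}}$, in which case the transposition is exactly $(\mu_1+\dots+\mu_{a^{*}},\,\mu_1+\dots+\mu_{a^{*}}+1)$, whose Hecke generator is $T_{\mu_{a^{*}}}$ in the factor $\cH_{\mu_{a^{*}}+1}$; absorbing the prefactor via $u\,\tT_{\mu_{a^{*}}}=T_{\mu_{a^{*}}}$ yields (\ref{mark-asso}). For the converse, every other transposition $(n',\mu_1+\dots+\mu_{a^{*}}+1)$ is conjugate inside $\mS_{\mu_{a^{*}}}$ to $(\mu_1+\dots+\mu_{a^{*}},\mu_1+\dots+\mu_{a^{*}}+1)$, so the trace property (M1) of $\rho^{\mu^{[a^{*}]}}$ (inherited from that of $\rho_{n+1}$) reduces the general identity to the single instance recorded in (\ref{mark-asso}). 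The argument for $g_n^{-1}$ runs in parallel, using that $T_{\mu_{a^{*}}}^{-1}$ is obtained from $T_{\mu_{a^{*}}}$ by a quadratic relation internal to $\cH_{\mu_{a^{*}}+1}$.

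The main technical obstacle will be the index bookkeeping: confirming via (\ref{form-pi}) that conjugation of $\pi_i^{-1}\,s_n\,\pi_i$ by the cyclic permutation $(\mu_1+\dots+\mu_{a^{*}}+1,\dots,n,n+1)^{-1}$ does produce the transposition inside $\mS^{\mu^{[a^{*}]}}$ as claimed, and checking that the Hecke-level conjugation inside $\cH_{\mu_{a^{*}}+1}$ correctly implements the transport between the various transposition-valued elements so that cyclicity completes the reverse direction.
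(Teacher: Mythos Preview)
Your approach is essentially the same as the paper's: both translate the Markov condition through the isomorphism $\Psi_n$ and the inclusion description of Proposition~\ref{prop-inc}, reducing (M2) to an identity between the associated traces $\rho^{\mu}$ and $\rho^{\mu^{[a]}}$. The paper works with the basis $E_{\chi_i^{\mu}}\tg_w$ of $Y_{d,n}$ while you work with the images $\Phi_{\mu}(h\,\M_{i,j})$; via Theorem~\ref{main} these are equivalent starting points and lead to the same computation.

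There is one inaccuracy worth correcting. You claim that $n'=\pi_i^{-1}(n)$ ``ranges over $\{\mu_1+\dots+\mu_{a^*-1}+1,\dots,\mu_1+\dots+\mu_{a^*}\}$ as $i$ varies over the indices with $\chi_i^{\mu}(t_n)=\xi_{a^*}$''. This is false: since $\pi_i$ is the \emph{minimal-length} representative of its left coset in $\mS_n/\mS^{\mu}$, it is increasing on each block of $\chi_1^{\mu}$, and since $n$ is the largest position one always has $\pi_i^{-1}(n)=\mu_1+\dots+\mu_{a^*}$. (The paper uses exactly this observation.) Consequently the transposition $\pi_{i_{a^*}}^{-1}s_n\pi_{i_{a^*}}$ is always $(\mu_1+\dots+\mu_{a^*},\,\mu_1+\dots+\mu_{a^*}+1)$, your ``specialization'' in the forward direction is automatic, and the conjugation argument you invoke for the converse is unnecessary. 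This does not break your proof---it only shows that the situation is simpler than you describe---but you should remove the erroneous range claim and the superfluous conjugation step.
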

\begin{proof}
Let $n\geq1$ and $x\in Y_{d,n}$. In the proof, we will often use the notations and the results explained in subsection \ref{natural}. First, note that  it is enough to take $x=E_{\chi_i^{\mu}}\tg_w$, an element of the basis of $Y_{d,n}$, where $\mu\in\Comp_d(n)$, $i\in\{1,\dots,m_{\mu}\}$ and $w\in\mS_n$. For later use, we denote by $b$ the integer in $\{1,\dots,d\}$ such that $\chi_i^{\mu}(t_n)=\xi_b$.

We recall that $\Psi_{\mu}(x)=\tT_{\pi_i^{-1} w \pi^{}_j}\,\M_{i,j}$,  where $j\in\{1,\dots,m_{\mu}\}$ is uniquely determined by $w(\chi_j^{\mu})=\chi_i^{\mu}$. Thus, we have:
\begin{equation}\label{eq-rho1}
\begin{array}{rcl}
\rho_n(x)&=&\rho^{\mu}\circ\operatorname{Tr}_{\Mat_{m_{\mu}}}\circ\Psi_{\mu}(x)\\[0.4em]
&=&\left\{\begin{array}{ll}
0 & \text{if $w(\chi_i^{\mu})\neq\chi_i^{\mu}$\,;}\\[0.3em]
\rho^{\mu}(\tT_{\pi_i^{-1} w \pi^{}_i})\ \  & \text{if $w(\chi_i^{\mu})=\chi_i^{\mu}$\,.}
\end{array}\right.
\end{array}
\end{equation}
Now, in $Y_{d,n+1}$, we have (due to defining formulas (\ref{def-g-tg}) for $g_w$ and $\tg_w$)
$$xg_n=u\sum_{1\leq a \leq d} E_{\chi_{i_a}^{\mu^{[a]}}}\tg_{ws_n}\ ,$$
and we note that, for $1\leq a \leq d$, we have $ws_n(\chi_{i_a}^{\mu^{[a]}})=\chi_{i_a}^{\mu^{[a]}}$ if and only if $s_n(\chi_{i_a}^{\mu^{[a]}})=\chi_{i_a}^{\mu^{[a]}}$ and $w(\chi_{i_a}^{\mu^{[a]}})=\chi_{i_a}^{\mu^{[a]}}$ (because $w\in\mS_n$). It means that $ws_n(\chi_{i_a}^{\mu^{[a]}})=\chi_{i_a}^{\mu^{[a]}}$ if and only if $a=b$ and $w(\chi_i^{\mu})=\chi_i^{\mu}$. Thus we obtain:
$$\begin{array}{rcl}
\rho_{n+1}(xg_n)&=&u\displaystyle \sum_{1\leq a \leq d}\rho^{\mu^{[a]}}\circ\operatorname{Tr}_{\Mat_{m_{\mu^{[a]}}}}\circ\Psi_{\mu^{[a]}}(xg_n)\\[1.5em]
&=&\left\{\begin{array}{ll}
0 & \text{if $w(\chi_i^{\mu})\neq\chi_i^{\mu}$\,;}\\[0.3em]
u\,\rho^{\mu^{[b]}}(\tT_{\pi_{i_b}^{-1} ws_n \pi^{}_{i_b}})\ \  & \text{if $w(\chi_i^{\mu})=\chi_i^{\mu}$\,.}
\end{array}\right.  \end{array}$$
We write $\pi_{i_b}^{-1} ws_n \pi^{}_{i_b}=\pi_{i_b}^{-1} w\pi^{}_{i_b}\cdot\pi^{-1}_{i_b}s_n \pi^{}_{i_b}$. Recall that $\pi_{i_b}=\pi_i\cdot(\mu_1+\dots+\mu_b+1,\dots,n,n+1)^{-1}$, see (\ref{form-pi}); moreover, 
$$\pi_i(n+1)=n+1\,,\ \ \ \ \ \text{and}\ \ \ \ \ \pi_i(\mu_1+\dots+\mu_b)=n\ ,$$ 
since $\pi_i\in\mS_n$ and $\pi_i$ is the element of minimal length sending $\chi_1^{\mu}$ on $\chi_i^{\mu}$.
Therefore we have
$$\pi^{-1}_{i_b}s_n \pi^{}_{i_b}=(\mu_1+\dots+\mu_b,\ \mu_1+\dots+\mu_b+1)\ \ \ \ \ \text{and}\ \ \ \ \ \pi_{i_b}^{-1} w \pi^{}_{i_b}(\mu_1+\dots+\mu_b+1)=\mu_1+\dots+\mu_b+1\,,$$
We conclude that $\tT_{\pi_{i_b}^{-1} ws_n \pi^{}_{i_b}}=\tT_{\pi_{i_b}^{-1} w \pi^{}_{i_b}}\tT_{\mu_1+\dots+\mu_b}=u^{-1}\tT_{\pi_{i_b}^{-1} w \pi^{}_{i_b}}T_{\mu_1+\dots+\mu_b}$\,, and in turn
\begin{equation}\label{eq-rho2}
\rho_{n+1}(xg_n)=\left\{\begin{array}{ll}
0 & \text{if $w(\chi_i^{\mu})\neq\chi_i^{\mu}$\,;}\\[0.3em]
\rho^{\mu^{[b]}}(\tT_{\pi_{i_b}^{-1} w \pi^{}_{i_b}}T_{\mu_1+\dots+\mu_b})\ \  & \text{if $w(\chi_i^{\mu})=\chi_i^{\mu}$\,.}
\end{array}\right.
\end{equation}

Now we will calculate $\rho_{n+1}(xg_n^{-1})$ using $g_n^{-1}=u^{-2}(g_n-ve_n)$. First we note that $e_nx=\displaystyle\sum_{1\leq a \leq d}e_nE_{\chi_{i_a}^{\mu^{[a]}}}\tg_{w}$, which gives $e_nx=E_{\chi_{i_b}^{\mu^{[b]}}}\tg_{w}$\,, since $e_nE_{\chi}=0$ whenever $\chi(t_n)\neq\chi(t_{n+1})$.
In addition we have $w(\chi_{i_b}^{\mu^{[b]}})=\chi_{i_b}^{\mu^{[b]}}$ if and only if $w(\chi_i^{\mu})=\chi_i^{\mu}$. Therefore, we obtain, using first the centrality of $\rho_{n+1}$,
$$
\begin{array}{rcl}
\rho_{n+1}(xe_n)&=&\rho_{n+1}(e_nx)\\[0.4em]
&=&\rho^{\mu^{[b]}}\circ\operatorname{Tr}_{\Mat_{m_{\mu^{[b]}}}}\circ\Psi_{\mu^{[b]}}(E_{\chi_{i_b}^{\mu^{[b]}}}\tg_{w})\\[1em]
&=&\left\{\begin{array}{ll}
0 & \text{if $w(\chi_i^{\mu})\neq\chi_i^{\mu}$\,;}\\[0.3em]
\rho^{\mu^{[b]}}(\tT_{\pi_{i_b}^{-1} w\pi^{}_{i_b}})\ \  & \text{if $w(\chi_i^{\mu})=\chi_i^{\mu}$\,.}
\end{array}\right.  \end{array}$$
As we have $T_{\mu_1+\dots+\mu_b}^{-1}=u^{-2}(T_{\mu_1+\dots+\mu_b}-v)$ in $\cH^{\mu^{[b]}}$, we conclude that
\begin{equation}\label{eq-rho3}
\rho_{n+1}(xg_n^{-1})=\left\{\begin{array}{ll}
0 & \text{if $w(\chi_i^{\mu})\neq\chi_i^{\mu}$\,;}\\[0.3em]
\rho^{\mu^{[b]}}(\tT_{\pi_{i_b}^{-1} w \pi^{}_{i_b}}T_{\mu_1+\dots+\mu_b}^{-1})\ \  & \text{if $w(\chi_i^{\mu})=\chi_i^{\mu}$\,.}
\end{array}\right.
\end{equation}

To sum up, in (\ref{eq-rho1})--(\ref{eq-rho3}), we obtained first that
\[\rho_n(x)=\rho_{n+1}(xg_n)=\rho_{n+1}(xg_n^{-1})=0\,, \ \ \ \ \text{if $w(\chi_i^{\mu})\neq\chi_i^{\mu}$\,.} \]
Furthermore, if $w(\chi_i^{\mu})=\chi_i^{\mu}$, then we write $\tT_{\pi_i^{-1} w \pi^{}_i}=x_1\otimes\dots\otimes x_d\in\cH^{\mu}$, and we note that, due to (\ref{inc-Hmu}) and (\ref{form-pi}), $\tT_{\pi_{i_b}^{-1} w \pi^{}_{i_b}}$ is the image in $\cH^{\mu^{[b]}}$ of $\tT_{\pi_i^{-1} w \pi^{}_i}$ under the natural inclusion $\cH^{\mu}\subset\cH^{\mu^{[b]}}$. So, if $w(\chi_i^{\mu})=\chi_i^{\mu}$, Formulas (\ref{eq-rho1})--(\ref{eq-rho3}) read
\[\rho_n(x)=\rho^{\mu}(x_1\otimes\dots\otimes x_b \otimes\dots\otimes x_d)\ \ \ \ \ \text{and}\ \ \ \ \ \rho_{n+1}(xg_n^{\pm1})=\rho^{\mu^{[b]}}(x_1\otimes\dots\otimes x_bT_{\mu_b}^{\pm1} \otimes\dots\otimes x_d)\ .\]
We conclude the proof of the Lemma by noticing that, when $i$ runs through $\{1,\dots,m_{\mu}\}$, every $b$ such that $\mu_b\geq1$ is obtained, and moreover every element of $\cH^{\mu}$ can be written as $\tT_{\pi_i^{-1} w \pi^{}_i}$ for some $w\in\mS_n$ satisfying $w(\chi_i^{\mu})=\chi_i^{\mu}$.
\end{proof}

\begin{lemma}\label{lemme2}
The family $\{\rho_n\}_{n\geq1}$ satisfies the Markov condition (M2) if and only if the associated traces $\rho^{\mu}$ satisfy, for any $\mu\in\bigcup_{n\geq1}\Comp_d(n)$ and any $a\in\{1,\dots,d\}$ such that $\mu_a\geq1$,
\begin{equation}\label{mark-asso2}
\begin{array}{rcl}
 \rho^{\mu^{[a]}}(x_1\otimes\dots\otimes x_a T_k\otimes\dots\otimes x_d)&=&\rho^{\mu^{[a]}}(x_1\otimes\dots\otimes x_a T_k^{-1}\otimes\dots\otimes x_d)\\[0.4em]
  &=&\rho^{\mu}(x_1\otimes\dots\otimes x_a \otimes\dots\otimes x_d)\ ,\end{array}
\end{equation}
for any $k\in\{1,\dots,\mu_a\}$ and any $x_1\otimes\dots\otimes x_d\in\cH^{\mu}$ such that $x_a\in\cH_k\subset\cH_{\mu_a}$.
\end{lemma}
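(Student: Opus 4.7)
The ``only if'' direction of this lemma is a strict strengthening of the ``only if'' direction of Lemma \ref{lemme1}: the case $k=\mu_a$ of (\ref{mark-asso2}) is precisely (\ref{mark-asso}). The ``if'' direction is therefore immediate: specializing (\ref{mark-asso2}) to $k=\mu_a$ gives (\ref{mark-asso}), which by Lemma \ref{lemme1} entails the Markov condition (M2).

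For the nontrivial direction, assume (M1) and (M2). By Lemma \ref{lemme1}, (\ref{mark-asso}) holds, i.e.~(\ref{mark-asso2}) is true for $k=\mu_a$. I would then extend this to all $k\in\{1,\ldots,\mu_a\}$ by descending induction on $k$, following exactly the pattern of the proof of Lemma \ref{lemme-tau-H}(i). The base case $k=\mu_a$ is just Lemma \ref{lemme1}. For the inductive step, assume the identity at $k+1$ (with $k<\mu_a$) and consider $x_a\in\cH_k$. Since the generators of $\cH_k$ are $T_1,\ldots,T_{k-1}$ and each commutes with $T_{k+1}$, the element $x_a$ commutes with $T_{k+1}$ in the $a$-th tensor factor. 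The trace condition (M1) on $\rho_{n+1}$ transfers through the isomorphism $\Phi_{\mu^{[a]}}$ to centrality of $\rho^{\mu^{[a]}}$ on $\cH^{\mu^{[a]}}$ (and similarly for $\rho^{\mu}$), so I may conjugate freely within the $a$-th slot.

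Concretely, I would write
\begin{equation*}
\rho^{\mu^{[a]}}(\cdots\otimes x_a T_k^{\pm1}\otimes\cdots)
=\rho^{\mu^{[a]}}(\cdots\otimes x_a\,T_{k+1}T_k^{\pm1}T_{k+1}^{-1}\otimes\cdots)
\end{equation*}
using centrality, and then apply the braid identity $T_{k+1}T_k^{\pm1}T_{k+1}^{-1}=T_k^{-1}T_{k+1}^{\pm1}T_k$ together with one more use of centrality (to cycle the leftmost $T_k^{-1}$ onto the right) to obtain
\begin{equation*}
\rho^{\mu^{[a]}}(\cdots\otimes x_a T_k^{\pm1}\otimes\cdots)
=\rho^{\mu^{[a]}}\bigl(\cdots\otimes (T_k x_a T_k^{-1})\,T_{k+1}^{\pm1}\otimes\cdots\bigr).
\end{equation*}
Since $T_k x_a T_k^{-1}\in\cH_{k+1}$, the induction hypothesis at level $k+1$ applies and yields
\begin{equation*}
\rho^{\mu^{[a]}}(\cdots\otimes x_a T_k^{\pm1}\otimes\cdots)
=\rho^{\mu}(\cdots\otimes T_k x_a T_k^{-1}\otimes\cdots)
=\rho^{\mu}(\cdots\otimes x_a\otimes\cdots),
\end{equation*}
where the final equality uses centrality of $\rho^{\mu}$. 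This completes the induction and the proof.

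The argument is essentially bookkeeping once Lemma \ref{lemme1} is available; the only point that requires any care is the verification that the associated traces $\rho^{\mu}$ and $\rho^{\mu^{[a]}}$ inherit centrality from $\rho_n$ and $\rho_{n+1}$ via the isomorphisms $\Phi_{\mu}$ and $\Phi_{\mu^{[a]}}$, and that the braid-plus-centrality manipulation above is self-consistent; neither presents a genuine obstacle.
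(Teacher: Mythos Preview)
Your proposal is correct and follows essentially the same approach as the paper: both prove the ``if'' direction by specializing to $k=\mu_a$ and invoking Lemma~\ref{lemme1}, and both prove the ``only if'' by descending induction on $k$ (equivalently, induction on $\mu_a-k$), using centrality to insert $T_{k+1}(\cdot)T_{k+1}^{-1}$, the braid identity $T_{k+1}T_k^{\pm1}T_{k+1}^{-1}=T_k^{-1}T_{k+1}^{\pm1}T_k$, and then the induction hypothesis. The paper explicitly says the argument ``is very similar to the proof of Lemma~\ref{lemme-tau-H}(i), so we only sketch it''; your write-up simply fills in the details the paper omits, in particular the final cycling by centrality of $\rho^{\mu}$ to undo the conjugation $T_k x_a T_k^{-1}\mapsto x_a$.
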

\begin{proof}
The "if" is a direct consequence of Lemma \ref{lemme1}, using the assumption with $k=\mu_a$.

To prove the "only if", we assume that the family $\{\rho_n\}_{n\geq1}$ satisfies the Markov condition (M2), and we proceed by induction on $\mu_a-k$ (it is very similar to the proof of Lemma \ref{lemme-tau-H}(i), so we only sketch it). The case $\mu_a-k=0$ is Lemma \ref{lemme1}. So let $k<\mu_a$. Then $T_{k+1}$ exists in $\cH_{\mu_a+1}$ and commutes with $x_a$. By centrality of $\rho^{\mu^{[a]}}$, we have
\[\rho^{\mu^{[a]}}(x_1\otimes\dots\otimes x_a T_k^{\pm1}\otimes\dots\otimes x_d)=\rho^{\mu^{[a]}}(x_1\otimes\dots\otimes x_a T_{k+1}T_k^{\pm1}T_{k+1}^{-1}\otimes\dots\otimes x_d)\ .\]
Using $T_{k+1}T_k^{\pm1}T_{k+1}^{-1}=T_{k}^{-1}T_{k+1}^{\pm1}T_{k}$ and the induction hypothesis, we obtain Formula (\ref{mark-asso2}).\end{proof}

\paragraph{Proof of Theorem \ref{theo-mark}.} We are now ready to prove Theorem \ref{theo-mark}.
Let $\{\rho_n\}_{n\geq1}$ be a Markov trace on  $\{Y_{d,n}\}_{n\geq1}$. As explained before Theorem \ref{theo-mark}, the existence of  associated traces $\rho^{\mu}$, such that Formula (\ref{rho-n}) holds, follows from the trace condition for $\rho_n$. We set $\alpha_{\mu}:=\rho^{\mu}(1)$ for any $\mu\in\Comp_d^0$.

Let $n\geq1$, $\mu\in\Comp_d(n)$ and $x=x_1\otimes\dots\otimes x_d\in\cH^{\mu}$. We will prove that
\begin{equation}\label{form-proof}
\rho^{\mu}(x)=\alpha_{[\mu]}\cdot \tau_{\mu_1}(x_1)\dots\tau_{\mu_d}(x_d)\ .
\end{equation}

First assume that $\mu=[\mu]$ (which is always true if $n=1$), so that every $\mu_a$ is 0 or 1. Then we have $x=1$ and Formula (\ref{form-proof}) follows from $\tau_1(1)=\tau_0(1)=1$.

\vskip .1cm
Assume now that $\mu\neq[\mu]$. We proceed by induction on $n$. First let $x\neq 1$, so that we have $a\in\{1,\dots,d\}$ such that $x_a\neq 1$ (in particular, $\mu_a\geq2$). 
We set $x_a=h_1T_kh_2$, where $k\in\{1,\dots,\mu_a-1\}$ and $h_1,h_2\in\cH_k\subset\cH_{\mu_a}$. We denote $\nu:=\mu_{[a]}\in\Comp_d(n-1)$ (that is, $\nu_a=\mu_a-1$ and $\nu_b=\mu_b$ if $b\neq a$), so that we have
\[\begin{array}{ll}\rho^{\mu}(x) & =\rho^{\mu}(x_1\otimes\dots\otimes h_1T_kh_2\otimes\dots\otimes x_d)\\[0.4em]
& =\rho^{\nu}(x_1\otimes\dots\otimes h_1h_2\otimes\dots\otimes x_d)\\[0.4em]
& =\alpha_{[\nu]}\tau_{\mu_1}(x_1)\dots\tau_{\mu_a-1}(h_1h_2)\dots\tau_{\mu_d}(x_d)\\[0.4em]
& =\alpha_{[\mu]}\tau_{\mu_1}(x_1)\dots\tau_{\mu_a}(h_1T_kh_2)\dots\tau_{\mu_d}(x_d)\ ,
\end{array}\]
where we first used (\ref{mark-asso2}) from Lemma \ref{lemme2}, then the induction hypothesis and finally the property of $\tau_{\mu_a}$ stated in item (i) of Lemma \ref{lemme-tau-H} (we also noted that $[\nu]=[\mu]$ since $\mu_a\geq2$).

\vskip .1cm
Finally let $x=1$. As $\mu\neq [\mu]$, we can choose $a\in\{1,\dots,d\}$ such that $\mu_a\geq 2$. We recall that, in $\cH_{\mu_a}$, we have $1=v^{-1}(T_1-u^2T_1^{-1})$. Setting again $\nu:=\mu_{[a]}\in\Comp_d(n-1)$, we calculate (below $v^{-1}(T_1-u^2T_1^{-1})$ is inserted in the $a$-th factor of the tensor product):
\[\rho^{\mu}(1)=\rho^{\mu}(1\otimes\dots1\otimes v^{-1}(T_1-u^2T_1^{-1})\otimes1\dots \otimes 1)=v^{-1}(1-u^2)\rho^{\nu}(1)\ ,\]
where we used (\ref{mark-asso2}) in Lemma \ref{lemme2}. Using the induction hypothesis, together with the fact that $[\nu]=[\mu]$ (since $\mu_a\geq2$), we obtain
\[\rho^{\mu}(1)=\alpha_{[\mu]}v^{-1}(1-u^2)\tau_{\mu_1}(1)\dots \tau_{\mu_a-1}(1)\dots\tau_{\mu_d}(1)=\alpha_{[\mu]}\tau_{\mu_1}(1)\dots \tau_{\mu_a}(1)\dots\tau_{\mu_d}(1)\ ,\]
where we used that $\tau_{\mu_a}(1)=v^{-1}(1-u^2) \tau_{\mu_a-1}(1)$ (Formula (\ref{tau1})). This concludes the proof of Formula (\ref{form-proof}).

\vskip .1cm
For the converse part of the theorem, we only have to check that, given a set of parameters $\{\alpha_{\mu}\,,\ \mu\in\Comp^0_d\}\subset \C[u^{\pm1},v^{\pm1}]$, the family $\{\rho_n\}_{n\geq1}$ of linear functions given by (\ref{rho-n}) and (\ref{rho-mu}) is a Markov trace. The trace condition is obviously satisfied as well as Equation (\ref{mark-asso}). The proof is concluded using Lemma \ref{lemme1}.\hfill$\square$

\begin{Rem}\label{rem-traces}
In view of Lemma \ref{lemme-tau-H}, item (ii), the associated traces of a Markov trace $\{\rho_n\}_{n\geq1}$ described by Theorem \ref{theo-mark} can be formally expressed as
\[\rho^{\mu}=\frac{\alpha_{[\mu]}}{\bigl(v^{-1}(1-u^2)\bigr)^{|[\mu]|-1}}\cdot\tau_n\,,\ \ \ \ \ \ \ \text{for any $\mu\in\Comp_d(n)$,}\]
where $\tau_n$ acts on $\cH^{\mu}$ by restriction from $\cH_n$ (note that Lemma \ref{lemme-tau-H}, item (ii), asserts in particular that the right hand side evaluated on $x\in\cH^{\mu}$ indeed belongs to $\C[u^{\pm1},v^{\pm1}]$).\hfill$\triangle$
\end{Rem}

\paragraph{Basis of the space of Markov traces.}
The classification of Markov traces on $\{Y_{d,n}\}_{n\geq1}$ given by Theorem \ref{theo-mark} can be formulated by saying that the space of Markov traces is a $\C[u^{\pm1},v^{\pm1}]$-module (for pointwise addition and scalar multiplication), which is free and of rank the cardinal of the set $\Comp_d^0$. We have
\[\sharp\Comp^0_d=\sum_{1\leq k\leq d}\left(\begin{array}{c}d\\ k\end{array}\right) = 2^d-1\ .\]
Further, Theorem \ref{theo-mark} provides a natural basis for this module. Indeed, for any $\mu^0\in\Comp^0_d$, let $\{\rho_{\mu^0,n}\}_{n\geq1}$
be the family of linear functions given by Formulas (\ref{rho-n})-(\ref{rho-mu}), for the following choice of parameters:
\[\alpha_{\mu^0}:=1\ \ \ \ \qquad\text{and}\qquad\ \ \ \ \alpha_{\nu^0}:=0\,,\ \ \text{for $\nu^0\in\Comp_d^0$ such that $\nu^0\neq\mu^0$\ .}\]
Then, $\{\rho_{\mu^0,n}\}_{n\geq1}$ is a Markov trace on $\{Y_{d,n}\}_{n\geq1}$ and it is given by
\begin{equation}\label{rho-mu0}
\rho_{\mu^0,n}(x)=\sum_{\underset{[\mu]=\mu^0}{\mu\in\Comp_d(n)}}(\tau_{\mu_1}\otimes\dots\otimes \tau_{\mu_d})\circ\operatorname{Tr}_{\Mat_{m_{\mu}}}\circ\Psi_{\mu}(E_{\mu}x)\,,\ \ \ \ \quad\text{for $n\geq1$ and $x\in Y_{d,n}$.}
\end{equation}
It follows from the classification that the following set is a $\C[u^{\pm1},v^{\pm1}]$-basis of the space of Markov traces on $\{Y_{d,n}\}_{n\geq1}$:
\begin{equation}\label{base-rho}
\left\{\,\{\rho_{\mu^0,n}\}_{n\geq1}\,\ |\ \ \mu^0\in\Comp^0_d\ \right\}\ .
\end{equation}

\section{Invariants for links and $\Z/d\Z$-framed links}\label{sec-inv}

Now that we have obtained a complete description of the Markov traces for $Y_{d,n}$, we will use them to deduce invariants for both framed and classical knots and links. In addition, we compare these invariants with the one coming from the study of the Iwahori-Hecke algebra of type $A$ : the HOMFLYPT polynomial.

\subsection{Classical braid group and HOMFLYPT polynomial}\label{subsec-inv-cl}

Let $n\in\Z_{\geq1}$. The braid group $B_n$ (of type $A_{n-1}$) is generated by elements $\sigma_1,\ldots, \sigma_{n-1},$
with defining relations:
\begin{equation}\label{def-B}
\begin{array}{rclcl}
\sigma_i\sigma_j & = & \sigma_j\sigma_i && \mbox{for all $i,j=1,\ldots,n-1$ such that $\vert i-j\vert > 1$,}\\[0.1em]
\sigma_i\sigma_{i+1}\sigma_i & = & \sigma_{i+1}\sigma_i\sigma_{i+1} && \mbox{for  all $i=1,\ldots,n-2$.}\\[0.1em]
\end{array}
\end{equation}
With the presentation (\ref{def-H}), the Iwahori--Hecke algebra $\cH_n$ is a quotient of the group algebra over $\C[u^{\pm1},v^{\pm1}]$ of the braid group $B_n$. We denote by $\delta_{\mathcal{H},n}$ the associated surjective morphism:
\[\delta_{\cH,n}\ :\ \C[u^{\pm1},v^{\pm1}]\bigl[B_n\bigr]\to\cH_n\,,\ \ \ \ \sigma_i\mapsto T_i\,,\ \ i=1,\dots,n-1\,.\]
The classical Alexander's theorem asserts that any link can be obtained as the closure of some braid. Next, the classical Markov's theorem gives necessary and sufficient conditions for two braids to have the same closure up to isotopy (see, \emph{e.g.}, \cite{jo}). The condition is that the two braids are equivalent under the equivalence relation generated by the conjugation and the so-called Markov move, namely, generated by
\begin{equation}\label{markov-move}\alpha\beta\sim\beta\alpha\ \ (\alpha,\beta\in B_n,\ n\geq1)\ \ \ \ \quad\text{and}\quad\ \ \ \ \alpha\sigma_n^{\pm1}\sim\alpha\ \ (\alpha\in B_n,\ n\geq1)\ .
\end{equation}
 Note that, in the Markov move, we consider $\alpha$ alternatively as an element of $B_n$ or of $B_{n+1}$ by the natural embedding $B_n\subset B_{n+1}$.

The conditions (M1) and (M2) in (\ref{Markov-tau}) for the Markov trace $\{\tau_n\}_{n\geq1}$ on the algebras $\cH_n$ reflect this equivalence relation and, as a consequence,  we obtain an isotopy invariant for links as follows. Let $K$ be a link and $\beta_K\in B_n$ a braid on $n$ strands having $K$ as its closure. The map $\Gamma_\cH$ from the set of links to the ring $\mathbb{C}[u^{\pm1},v^{\pm1}]$ defined by
\[\Gamma_\cH(K)=\tau_n\circ\delta_{\cH,n}(\beta_K)\ ,\]
only depends on the isotopy class of $K$ (this is immediate, comparing (\ref{Markov-tau}) and (\ref{markov-move})), and thus provides an isotopy invariants for links.

\begin{Rem}\label{Rem-Mar1}
The Laurent polynomial $\Gamma_\cH(K)$ in $u,v$ is called the HOMFLYPT polynomial. It was first obtained by a slightly different approach, using the Ocneanu trace on $\cH_n$ and a rescaling procedure, see \cite{jo} and references therein. We followed the approach in \cite[section 4.5]{GP}, where the connections between both approaches are specified. We will carefully detail this connection in the more general context of the Yokonuma--Hecke algebras below.\hfill$\triangle$
\end{Rem}

\subsection{$\Z/d\Z$-framed braid group and $\Z/d\Z$-framed links}\label{subsec-FB}

Roughly speaking, a $\Z/d\Z$-framed braid is a usual braid with an element of $\Z/d\Z$ (the \emph{framing}) attached to each strand. Similarly, a $\Z/d\Z$-framed link is a classical link where each connected component carries a framing in $\Z/d\Z$. The notion of isotopy for framed links is generalized straightforwardly from the classical setting. We refer to \cite{jula3,kosm} for more details on framed braids and framed links. 

\vskip .1cm
Let $d\in\Z_{\geq1}$. The $\Z/d\Z$-framed braid group, denoted by $\Z/d\Z\wr B_n$, is (isomorphic to) the semi-direct product of the abelian group $\left(\Z/d\Z\right)^n$ by the braid group $B_n$, where the action of $B_n$ on $\left(\Z/d\Z\right)^n$ is by permutation. In other words, the group $\Z/d\Z\wr B_n$ is generated by elements $\sigma_1,\sigma_2,\ldots,\sigma_{n-1}, t_1,\ldots, t_n,$
and relations:
\begin{equation}\label{def-FB}\begin{array}{rclcl}
\sigma_i\sigma_j & = & \sigma_j\sigma_i && \mbox{for all $i,j=1,\ldots,n-1$ such that $\vert i-j\vert > 1$,}\\[0.1em]
\sigma_i\sigma_{i+1}\sigma_i & = & \sigma_{i+1}\sigma_i\sigma_{i+1} && \mbox{for  all $i=1,\ldots,n-2$,}\\[0.1em]
t_it_j & =  & t_jt_i &&  \mbox{for all $i,j=1,\ldots,n$,}\\[0.1em]
\sigma_it_j & = & t_{s_i(j)}\sigma_i && \mbox{for all $i=1,\ldots,n-1$ and $j=1,\ldots,n$,}\\[0.1em]
t_j^d   & =  &  1 && \mbox{for all $j=1,\ldots,n$.}
\end{array}
\end{equation}
The closure of a $\Z/d\Z$-framed braid is naturally a $\Z/d\Z$-framed link (the framing of a connected component is the sum of the framings of the strands forming this component after closure). Given a classical link, from the classical Alexander's theorem, we have a classical braid closing to this link, and it is immediate that by adding a suitable framing on this braid, one can obtain any possible framing on the given link. So the analogue of Alexander's theorem is also true for $\Z/d\Z$-framed braids and links. 

\vskip .1cm
Moreover, the Markov's theorem has also been generalized to the $\Z/d\Z$-framed setting (see \cite[Lemma 1]{kosm} or \cite[Theorem 6]{jula3}).
The necessary and sufficient conditions for two $\Z/d\Z$-framed braids to have the same closure up to isotopy is formally the same as for usual braids; namely, the two braids have to be equivalent under the equivalence relation generated by
\begin{equation}\label{markov-move2}
\tilde{\alpha}\tilde{\beta}\sim\tilde{\beta}\tilde{\alpha}\ \ (\tilde{\alpha},\tilde{\beta}\in \Z/d\Z\wr B_n,\ n\geq1)\ \ \ \ \quad\text{and}\quad\ \ \ \ \tilde{\alpha}\sigma_n^{\pm1}\sim\tilde{\alpha}\ \ (\tilde{\alpha}\in \Z/d\Z\wr B_n,\ n\geq1)\ .
\end{equation}
The conditions (M1) and (M2) in (\ref{Markov-rho}) for a Markov trace $\{\rho_n\}_{n\geq1}$ on the Yokonuma--Hecke algebras reflect this equivalence relation, and this will allow to use the Markov traces obtained in the previous section to construct isotopy invariants for $\Z/d\Z$-framed links.

\paragraph{A family of morphisms from the group algebra of $\Z/d\Z\wr B_n$ to $Y_{d,n}$.} Let $\gamma$ be another indeterminate and set $R:=\C[u^{\pm1},v^{\pm1},\gamma^{\pm1}]$.
We define:
\begin{equation}\label{def-delta}
\delta^{\gamma}_{Y,n}\ :\ \ \sigma_i\mapsto \bigl(\gamma+(1-\gamma)e_i\bigr)g_i\ \ (i=1,\dots,n-1)\,,\ \ \ \ \ t_j\mapsto t_j\ \ (j=1,\dots,n)\,.
\end{equation}

\begin{lemma}\label{lem-delta}
The map $\delta^{\gamma}_{Y,n}$ extends to an algebras homomorphism from $R\bigl[\Z/d\Z\wr B_n\bigr]$ to $RY_{d,n}$.
\end{lemma}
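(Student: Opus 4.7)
The plan is simply to verify that each of the five families of defining relations for $\Z/d\Z\wr B_n$ listed in (\ref{def-FB}) is satisfied by the proposed images in $RY_{d,n}$. The commutation relations $t_it_j=t_jt_i$ and the order relations $t_j^d=1$ are immediate, because the $t_j$ are sent to themselves and these relations already hold in $Y_{d,n}$. For the mixed relation $\sigma_it_j=t_{s_i(j)}\sigma_i$, I would use that $e_i$ lies in the commutative subalgebra $\mathcal{A}_n$, so that $e_i$ commutes with $t_j$ for every $j$; then $\bigl(\gamma+(1-\gamma)e_i\bigr)g_it_j=\bigl(\gamma+(1-\gamma)e_i\bigr)t_{s_i(j)}g_i=t_{s_i(j)}\bigl(\gamma+(1-\gamma)e_i\bigr)g_i$. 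For the far-commutation $\sigma_i\sigma_j=\sigma_j\sigma_i$ with $|i-j|>1$, I would invoke the fact that $g_i,e_i$ commute with $g_j,e_j$ in $Y_{d,n}$ in that range, which is straightforward from the defining relations of $Y_{d,n}$.

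The only relation requiring real work is the braid relation $\sigma_i\sigma_{i+1}\sigma_i=\sigma_{i+1}\sigma_i\sigma_{i+1}$. Set $h_i:=\gamma+(1-\gamma)e_i$, so that $\delta^{\gamma}_{Y,n}(\sigma_i)=h_ig_i=g_ih_i$ (using $e_ig_i=g_ie_i$). The key computational tool will be the conjugation identity $g_ke_\ell g_k^{-1}=e_{s_k(\ell),s_k(\ell+1)}$, which follows from $g_kt_jg_k^{-1}=t_{s_k(j)}$; writing $e_{a,b}:=\tfrac{1}{d}\sum_{s}t_a^st_b^{-s}$, this gives in particular $g_ie_{i+1}g_i^{-1}=e_{i,i+2}=g_{i+1}e_ig_{i+1}^{-1}$ and $g_ie_{i,i+2}g_i^{-1}=e_{i+1}$, $g_{i+1}e_{i,i+2}g_{i+1}^{-1}=e_i$. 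Setting $h_{i,i+2}:=\gamma+(1-\gamma)e_{i,i+2}$, these translate to the commutations $g_ih_{i+1}=h_{i,i+2}g_i$, $g_{i+1}h_i=h_{i,i+2}g_{i+1}$, $g_ih_{i,i+2}=h_{i+1}g_i$ and $g_{i+1}h_{i,i+2}=h_ig_{i+1}$.

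Using these, I would push all $h$-factors to the left in both $\tilde g_i\tilde g_{i+1}\tilde g_i$ and $\tilde g_{i+1}\tilde g_i\tilde g_{i+1}$. A direct computation then yields
\[
\tilde g_i\tilde g_{i+1}\tilde g_i=h_ih_{i,i+2}h_{i+1}\cdot g_ig_{i+1}g_i\qquad\text{and}\qquad\tilde g_{i+1}\tilde g_i\tilde g_{i+1}=h_{i+1}h_{i,i+2}h_i\cdot g_{i+1}g_ig_{i+1}.
\]
Since $h_i,h_{i+1},h_{i,i+2}$ all lie in the commutative subalgebra $\mathcal{A}_n$, the scalar parts on the two sides coincide; and the braid relation $g_ig_{i+1}g_i=g_{i+1}g_ig_{i+1}$ in $Y_{d,n}$ takes care of the $g$-part. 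This will complete the verification. The main (only) obstacle is thus the bookkeeping in this last computation, and the crucial observation that conjugating $e_j$ by the relevant $g$'s keeps us inside the three-element family $\{e_i,e_{i+1},e_{i,i+2}\}$, which in turn gives the three-element family $\{h_i,h_{i+1},h_{i,i+2}\}$ whose members pairwise commute.
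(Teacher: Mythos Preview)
Your argument is essentially the same as the paper's: both reduce the braid relation by pushing the factors $h_i=\gamma+(1-\gamma)e_i$ to the left via the conjugation rules $g_ie_{i+1}=e_{i,i+2}g_i$, $g_{i+1}e_i=e_{i,i+2}g_{i+1}$, etc., arriving at $h_ih_{i,i+2}h_{i+1}\,g_ig_{i+1}g_i=h_{i+1}h_{i,i+2}h_i\,g_{i+1}g_ig_{i+1}$ and concluding by commutativity of $\mathcal{A}_n$ together with the braid relation for the $g_i$. One small omission: since you are extending to the group algebra of $\Z/d\Z\wr B_n$, you also need the images of the $\sigma_i$ to be invertible in $RY_{d,n}$; the paper records the explicit inverse $\bigl(\gamma^{-1}+(1-\gamma^{-1})e_i\bigr)g_i^{-1}$, which follows at once from $e_i^2=e_i$.
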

\begin{proof}
We have to check that the defining relations (\ref{def-FB}) are satisfied by the images of the generators, and also that the images of the generators are invertible elements of $RY_{d,n}$. For the latter statement, it is easily checked that
\begin{equation}\label{delta-inv}
\Bigl(\bigl(\gamma+(1-\gamma)e_i\bigr)g_i\Bigr)^{-1}=\bigl(\gamma^{-1}+(1-\gamma^{-1})e_i\bigr)g_i^{-1}\,,\ \ \ \ \ i=1,\dots,n-1.
\end{equation}
The three last relations in (\ref{def-FB}) are satisfied since the elements $e_i$'s and $t_j$'s commute. Then, if $|i-j|>1$, a direct calculation shows that the image of the first relation in (\ref{def-FB}) is
\[(\gamma+(1-\gamma)e_i\bigr)(\gamma+(1-\gamma)e_j\bigr)g_ig_j= (\gamma+(1-\gamma)e_j\bigr)(\gamma+(1-\gamma)e_i\bigr)g_jg_i\ ,\]
since $g_ie_j=e_jg_i$ and $g_je_i=e_ig_j$ whenever $|i-j|>1$. This relation is satisfied in $RY_{d,n}$.

Finally, using again the commutation relations between the generators $g_i$'s and $t_j$, we calculate the image of the first relation in (\ref{def-FB}), and obtain
\[(\gamma+(1-\gamma)e_i\bigr)(\gamma+(1-\gamma)e_{i,i+2}\bigr)(\gamma+(1-\gamma)e_{i+1}\bigr)\bigl(g_ig_{i+1}g_i-g_{i+1}g_ig_{i+1})=0\ ,\]
where $e_{i,i+2}:=\displaystyle \frac{1}{d}\sum_{0\leq s \leq d-1}t_i^st_{i+2}^{-s}$. This relation is also satisfied in $RY_{d,n}$.
\end{proof}

\begin{Rem}
If we specialize the parameter $\gamma$ to $1$, we obtain the natural surjective morphism from the group algebra over $\C[u^{\pm1},v^{\pm1}]$ of the group $\Z/d\Z\wr B_n$ to its quotient $Y_{d,n}$ with the presentation (\ref{rel-def-Y}) of Section \ref{sec-def}. For other values of $\gamma$, this morphism is related with other equivalent presentations of $Y_{d,n}$; see below Subsection \ref{subsec-jula}.
\end{Rem}

\subsection{Invariants for classical and $\Z/d\Z$-framed links from $Y_{d,n}$}\label{subsec-inv-F}

\paragraph{Invariants for $\Z/d\Z$-framed links from $Y_{d,n}$.}
Let $\{\rho_n\}_{n\geq1}$ be a Markov trace on the Yokonuma--Hecke algebras $\{Y_{d,n}\}_{n\geq1}$ (defined by Conditions (M1) and (M2) in (\ref{Markov-rho})), and extend it $R$-linearly to $\{RY_{d,n}\}_{n\geq1}$. 

Let $\tilde{K}$ be a $\Z/d\Z$-framed link and $\tilde{\beta}_{\tilde{K}}\in \Z/d\Z\wr B_n$ a $\Z/d\Z$-framed braid on $n$ strands having $\tilde{K}$ as its closure. We define a map $\FG^{\gamma}_{Y,\rho}$ from the set of $\Z/d\Z$-framed links to the ring $R$ by
\begin{equation}\label{tgamma}
\FG^{\gamma}_{Y,\rho}(\tilde{K})=\rho_n\circ\delta^{\gamma}_{Y,n}(\tilde{\beta}_{\tilde{K}})\ ,
\end{equation}
where the map $\delta^{\gamma}_{Y,n}$ is defined in (\ref{def-delta}). For $\mu^0\in\Comp^0_d$, we denote by $\FG^{\gamma}_{Y,\mu_0}$ the map corresponding to the Markov trace $\{\rho_{\mu^0,n}\}_{n\geq1}$ considered in (\ref{rho-mu0}). The classification of Markov traces of the previous section, together with the construction detailed in this section, lead to the following result.
\begin{Th}\label{theo-inv}
\begin{enumerate}
\item[\bf{1.}] For any Markov trace $\{\rho_n\}_{n\geq1}$ on $\{Y_{d,n}\}_{n\geq1}$, the map $\FG^{\gamma}_{Y,\rho}$ is an isotopy invariant for $\Z/d\Z$-framed links with values in $R=\C[u^{\pm1},v^{\pm1},\gamma^{\pm1}]$.

\item[\bf{2.}] The set of invariants for $\Z/d\Z$-framed links obtained from the Yokonuma--Hecke algebras via this construction consists of  all $R$-linear combinations of invariants from the set
\begin{equation}\label{basis-inv}
\left\{\ \FG^{\gamma}_{Y,\mu^0}\ \ |\ \ \mu^0\in\Comp^0_d\ \right\}\ .
\end{equation}
\end{enumerate}
\end{Th}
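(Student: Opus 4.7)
The plan is to establish part~1 by verifying invariance of $\FG^{\gamma}_{Y,\rho}$ under the $\Z/d\Z$-framed Markov equivalence (\ref{markov-move2}), and to deduce part~2 from $R$-linearity of the construction together with the basis property (\ref{base-rho}).

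For part~1, conjugation invariance is immediate from condition (M1) for $\rho$, together with $\delta^{\gamma}_{Y,n}$ being an algebra homomorphism (Lemma \ref{lem-delta}). For invariance under the Markov stabilization, setting $x:=\delta^{\gamma}_{Y,n}(\tilde{\alpha}) \in Y_{d,n}$ and using that $g_ne_n = e_ng_n$ together with (M2), one computes
\[
\rho_{n+1}\bigl(x\delta^{\gamma}_{Y,n+1}(\sigma_n)\bigr) = \gamma\rho_n(x) + (1-\gamma)\rho_{n+1}(xe_ng_n).
\]
The required equality $\rho_{n+1}(x\delta^{\gamma}_{Y,n+1}(\sigma_n)) = \rho_n(x)$ is therefore equivalent to the key identity
\[
\rho_{n+1}(xe_ng_n) = \rho_n(x), \quad\text{for all $n\geq 1$ and $x\in Y_{d,n}$.}\quad(\star)
\]
Assuming $(\star)$, the analogous identity for $\sigma_n^{-1}$ will follow, via the relation $g_n^{-1} = u^{-2}(g_n - ve_n)$, from the auxiliary identity $\rho_{n+1}(xe_n) = v^{-1}(1-u^2)\rho_n(x)$, itself a direct consequence of applying (M2) to $ve_n = g_n - u^2g_n^{-1}$.

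The main obstacle is to prove the key identity $(\star)$. My plan is to reduce, by $R$-linearity, to the basic Markov traces $\{\rho_{\mu^0,n}\}_{n\geq 1}$ from (\ref{rho-mu0}), and to verify $(\star)$ on a basis element $x = E_{\chi_i^{\mu}}\tg_w$ of $Y_{d,n}$. A direct calculation, parallel to the computation of $e_nx$ in the proof of Lemma \ref{lemme1} (using $E_{\chi_i^{\mu}} = \sum_a E_{\chi_{i_a}^{\mu^{[a]}}}$ and $\tg_w t_n^s = t_{w(n)}^s \tg_w$), gives
\[
xe_ng_n = u\,E_{\chi_{i_c}^{\mu^{[c]}}}\tg_{ws_n}\ ,
\]
where $c$ is defined by $\chi_i^{\mu}(t_{w(n)}) = \xi_c$. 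Applying $\Psi_{\mu^{[c]}}$ and analyzing when the trace of the resulting elementary matrix $\M_{i_c,k}$ survives, one sees that both sides of $(\star)$ vanish unless $w(\chi_i^{\mu})=\chi_i^{\mu}$; in that case $c$ coincides with the integer $b$ such that $\chi_i^{\mu}(t_n)=\xi_b$, and using formula (\ref{form-pi}) as in the proof of Lemma \ref{lemme1}, the identity $(\star)$ reduces to
\[
u\cdot\rho^{\mu^{[b]}}\bigl(\tT_{\pi_{i_b}^{-1}w\pi_{i_b}}\cdot\tT_m\bigr) = \rho^{\mu}\bigl(\tT_{\pi_i^{-1}w\pi_i}\bigr),
\]
with $m = \mu_1+\dots+\mu_b$. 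Writing $\pi_i^{-1}w\pi_i = x_1\otimes\dots\otimes x_d\in\mS^{\mu}$ blockwise, the element $\pi_{i_b}^{-1}w\pi_{i_b}\in\mS^{\mu^{[b]}}$ has the same blockwise factors with $x_b\in\mS_{\mu_b}\subset\mS_{\mu_b+1}$ fixing the added position; since $\mu_b\geq 1$ we have $[\mu^{[b]}]=[\mu]$ and hence $\alpha_{[\mu^{[b]}]} = \alpha_{[\mu]}$, and the reduction then follows from the Markov property of $\tau$ on the Iwahori--Hecke algebra of type $A$ applied to the $b$-th tensor factor (together with the length relation $\ell(\pi_{i_b}^{-1}w\pi_{i_b}\cdot s_m) = \ell(\pi_i^{-1}w\pi_i)+1$, which accounts for the prefactor $u$).

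For part~2, the assignment $\rho \mapsto \FG^{\gamma}_{Y,\rho}$ is manifestly $R$-linear in $\rho$. Combined with (\ref{base-rho}), which asserts that $\{\{\rho_{\mu^0,n}\}_{n\geq 1}\,|\,\mu^0\in\Comp^0_d\}$ is an $R$-basis of the space of Markov traces on $\{Y_{d,n}\}_{n\geq 1}$, this immediately identifies the set of invariants produced by our construction with the $R$-linear span of $\{\FG^{\gamma}_{Y,\mu^0}\,|\,\mu^0\in\Comp^0_d\}$.
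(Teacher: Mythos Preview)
Your argument is correct, but your proof of the key identity $(\star)$ is considerably more involved than the paper's. The paper observes that the equivalent statement $\rho_{n+1}(xe_ng_n^{\pm1})=\rho_{n+1}(xg_n^{\pm1})$ holds for \emph{any} central linear form $\kappa$ on $Y_{d,n+1}$, by the three-line computation
\[
\kappa(xt_{n+1}^{-s}t_n^{s}g_n^{\pm1})=\kappa(xt_{n+1}^{-s}g_n^{\pm1}t_{n+1}^{s})=\kappa(t_{n+1}^{s}xt_{n+1}^{-s}g_n^{\pm1})=\kappa(xg_n^{\pm1}),
\]
using $t_ng_n^{\pm1}=g_n^{\pm1}t_{n+1}$ and that $t_{n+1}$ commutes with $x\in Y_{d,n}$; summing over $s$ and applying (M2) gives $(\star)$ and its $g_n^{-1}$ variant at once. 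Your route instead feeds the identity through the isomorphism $\Psi_{\mu}$, the explicit formula (\ref{form-pi}) for $\pi_{i_b}$, the structure of the associated traces $\rho^{\mu}$ from Theorem~\ref{theo-mark}, and finally the Markov property of $\tau$ on the Iwahori--Hecke side. This works, but it reproves a purely trace-theoretic fact by invoking the full classification, and it forces you to treat $\sigma_n^{-1}$ separately via the auxiliary identity for $\rho_{n+1}(xe_n)$. The paper's argument is shorter, handles both signs simultaneously, and shows that $(\star)$ depends only on (M1), (M2), and the relation $g_nt_n=t_{n+1}g_n$, not on the specific form of the Markov traces. Part~2 is handled the same way in both proofs.
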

\begin{proof} \textbf{1.} By the Markov's theorem for $\Z/d\Z$-framed links, we have to check that the map given by  $\{\rho_n\circ\delta^{\gamma}_{Y,n}\}_{n\geq1}$ from the set of $\Z/d\Z$-framed braids to the ring $R$ coincide on equivalent braids, for the equivalence relation generated by the moves in (\ref{markov-move2}). 

From the trace condition (M1) in (\ref{Markov-rho}) for $\{\rho_n\}_{n\geq1}$ and the fact that the maps $\delta^{\gamma}_{Y,n}$ ($n\geq1$) are algebra morphisms (Lemma \ref{lem-delta}), it follows at once that the map $\rho_n\circ\delta^{\gamma}_{Y,n}$ ($n\geq1$) coincides on $\tilde{\alpha}\tilde{\beta}$ and $\tilde{\beta}\tilde{\alpha}$ for any two braids $\tilde{\alpha},\tilde{\beta}\in \Z/d\Z\wr B_n$.

Next, let $n\geq1$ and $\tilde{\alpha}\in \Z/d\Z\wr B_n$. We set $x_{\tilde{\alpha}}:=\delta^{\gamma}_{Y,n}(\tilde{\alpha})\in Y_{d,n}$. Note that, seeing $x_{\tilde{\alpha}}$ as an element of $Y_{d,n+1}$, we also have $x_{\tilde{\alpha}}:=\delta^{\gamma}_{Y,n+1}(\tilde{\alpha})$, by definition of $\{\delta^{\gamma}_{Y,n}\}_{n\geq1}$. From (\ref{def-delta}) and (\ref{delta-inv}), we have
\[\rho_{n+1}\circ\delta^{\gamma}_{Y,n+1}(\tilde{\alpha}\sigma_n^{\pm1})=\rho_{n+1}\Bigl(x_{\tilde{\alpha}}\bigl(\gamma^{\pm1}+(1-\gamma^{\pm1})e_n\bigr)g_n^{\pm1}\Bigr)\ ,\]
and we need to prove that it is equal to $\rho_{n}\circ\delta^{\gamma}_{Y,n}(\tilde{\alpha})=\rho_{n}(x_{\tilde{\alpha}})$. This will follow from the Markov condition (M2) in (\ref{Markov-rho}) for $\{\rho_n\}_{n\geq1}$ together with the following fact:
\begin{equation}\label{eq-proof}
\rho_{n+1}(xe_ng^{\pm1}_n)=\rho_{n+1}(xg^{\pm1}_n)\,,\ \ \ \ \ \text{for any $n\geq1$ and $x\in Y_{d,n}$\,.}
\end{equation}
This last relation is true for any linear map $\kappa$ on $Y_{d,n+1}$ satisfying the trace condition since, for $s\in\{1,\dots,d\}$,
\[\kappa(xt_{n+1}^{-s}t_n^{s}g^{\pm1}_n)=\kappa(xt_{n+1}^{-s}g^{\pm1}_nt_{n+1}^{s})=\kappa(t_{n+1}^{s}xt_{n+1}^{-s}g^{\pm1}_n)=\kappa(xg^{\pm1}_n)\ ,\]
where we used successively the relation $t_ng^{\pm1}_n=g^{\pm1}_nt_{n+1}$, the trace condition and the fact that $t_{n+1}$ commutes with $x\in Y_{d,n}$.

\vskip .1cm
\textbf{2.} This is simply a reformulation of the classification of Markov traces $\{\rho_n\}_{n\geq1}$ on $\{Y_{d,n}\}_{n\geq1}$ given by Theorem \ref{theo-mark} leading to the basis $\left\{\,\{\rho_{\mu^0,n}\}_{n\geq1}\,\ |\ \ \mu^0\in\Comp^0_d\ \right\}$ in (\ref{base-rho}).
\end{proof}

\paragraph{Invariants for classical links from $Y_{d,n}$.} The classical braid group $B_n$ is naturally a subgroup of the $\Z/d\Z$-framed braid group $\Z/d\Z\wr B_n$ (a classical braid is seen as a $\Z/d\Z$-framed braid with all framings equal to 0). Therefore, one can restrict the maps $\FG^{\gamma}_{Y,\rho}$ in (\ref{tgamma}) to classical links, and obtain invariants for classical links since the Markov's theorem is formally the same for classical and $\Z/d\Z$-framed links; compare (\ref{markov-move}) and (\ref{markov-move2}).

Explicitly, let $K$ be a link and $\beta_{K}\in B_n$ a braid on $n$ strands having $K$ as its closure. We now see $\beta_K$ as an element of the $\Z/d\Z$-framed braid group $\Z/d\Z\wr B_n$ and we set
\begin{equation}\label{gamma}
\Gamma^{\gamma}_{Y,\rho}(K)=\rho_n\circ\delta^{\gamma}_{Y,n}(\beta_{K})\ .
\end{equation}
For $\mu^0\in\Comp^0_d$, we denote by $\Gamma^{\gamma}_{Y,\mu^0}$ the map corresponding to the Markov trace $\{\rho_{\mu^0,n}\}_{n\geq1}$ considered in (\ref{rho-mu0}). According to the above discussion, the following corollary is immediately deduced from Theorem \ref{theo-inv}
\begin{Cor}\label{coro-inv2}
\begin{enumerate}
\item[\bf{1.}] For any Markov trace $\{\rho_n\}_{n\geq1}$ on $\{Y_{d,n}\}_{n\geq1}$, the map $\Gamma^{\gamma}_{Y,\rho}$ is an isotopy invariant for classical links with values in $R=\C[u^{\pm1},v^{\pm1},\gamma^{\pm1}]$.

\item[\bf{2.}] The set of invariants for classical links obtained from the Yokonuma--Hecke algebras via this construction consists of  all $R$-linear combinations of invariants from the set
\begin{equation}\label{basis-inv2}
\left\{\ \Gamma^{\gamma}_{Y,\mu^0}\ \ |\ \ \mu^0\in\Comp^0_d\ \right\}\ .
\end{equation}
\end{enumerate}
\end{Cor}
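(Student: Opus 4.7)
The plan is to deduce the corollary directly from Theorem \ref{theo-inv} by restricting to the subclass of classical braids sitting inside the $\Z/d\Z$-framed braid group. First I would observe that, from the presentations (\ref{def-B}) and (\ref{def-FB}), the subgroup of $\Z/d\Z\wr B_n$ generated by $\sigma_1,\dots,\sigma_{n-1}$ alone (i.e.\ all framings trivial) is canonically isomorphic to $B_n$. Under this identification, the closure of $\beta\in B_n$ viewed in $\Z/d\Z\wr B_n$ is exactly the classical link that is the closure of $\beta$, seen as a $\Z/d\Z$-framed link all of whose components have framing $0$. Thus $\Gamma^{\gamma}_{Y,\rho}$ as defined in (\ref{gamma}) is simply the restriction of $\FG^{\gamma}_{Y,\rho}$ of (\ref{tgamma}) to this subclass.

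For part \textbf{1}, I would then compare the classical Markov moves (\ref{markov-move}) with the framed ones (\ref{markov-move2}). Since the former are the restriction of the latter to $B_n\subset\Z/d\Z\wr B_n$, and since Theorem \ref{theo-inv}(1) has already established invariance under the equivalence relation generated by (\ref{markov-move2}), the invariance of $\Gamma^{\gamma}_{Y,\rho}$ under (\ref{markov-move}) follows. Combined with the classical Alexander theorem, which guarantees that every classical link is the closure of some $\beta_K\in B_n$, this yields a well-defined isotopy invariant of classical links with values in $R$.

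For part \textbf{2}, the construction $\{\rho_n\}\mapsto \Gamma^{\gamma}_{Y,\rho}$ is $R$-linear in $\{\rho_n\}$ (immediate from (\ref{gamma}) since $\rho_n$ enters linearly). Therefore, the $R$-basis $\{\{\rho_{\mu^0,n}\}_{n\geq1}\,|\,\mu^0\in\Comp^0_d\}$ of the space of Markov traces given in (\ref{base-rho}) (coming from Theorem \ref{theo-mark}) spans, via this construction, precisely the $R$-linear combinations of the invariants $\Gamma^{\gamma}_{Y,\mu^0}$. This gives the set (\ref{basis-inv2}).

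There is no real obstacle here: everything reduces to the inclusions $B_n\subset \Z/d\Z\wr B_n$ and the fact that classical Markov moves are a subset of framed Markov moves. The only small subtlety worth spelling out is that one must check nothing goes wrong when $K$ is realised by different braids $\beta_K,\beta'_K\in B_n$ (even if one allowed intermediate equivalences through genuinely framed braids, one would still land back on the same value, because we have already proved invariance in the framed setting). Consequently the corollary is simply a formal consequence of Theorem \ref{theo-inv}, as claimed in the text.
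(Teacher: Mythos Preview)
Your proof is correct and follows exactly the paper's approach: the corollary is stated there as an immediate consequence of Theorem~\ref{theo-inv}, via the inclusion $B_n\subset\Z/d\Z\wr B_n$ and the observation that the classical Markov moves (\ref{markov-move}) are the restriction of the framed ones (\ref{markov-move2}). Your treatment of part~\textbf{2} via $R$-linearity of $\{\rho_n\}\mapsto\Gamma^{\gamma}_{Y,\rho}$ and the basis (\ref{base-rho}) is likewise the intended argument.
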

Note that, in the definition of the invariant $\Gamma^{\gamma}_{Y,\rho}(K)$ in (\ref{gamma}), even though the word $\beta_K$ only contains generators $\sigma_i$ (and no $t_j$), the image $\delta^{\gamma}_{Y,n}(\beta_{K})$ in the algebra	 $Y_{d,n}$ does involve in general the generators $t_j$ (more precisely, it involves the elements $e_i$). Indeed, first, the image of $\sigma_i$ by the map $\delta^{\gamma}_{Y,n}$ contains the idempotent $e_i$. Besides, even if $\gamma$ is specialized to $1$, as soon as one $\sigma_i^2$ for example appears in $\beta_K$, then the last relation of (\ref{rel-def-Y}) is used to calculate $\rho_n\circ\delta^1_{Y,n}(\beta_{K})$, and this last relation involves the idempotent $e_i$.

\begin{Exa}\label{exa2}
Let $d=2$. We will explicitly give $\Gamma^{\gamma}_{Y,\mu^0}(K)$ for $\mu^0\in\Comp_2^0$ and some classical links $K$. Using the notations of \cite{KAT}, let $K_1=\text{L10a46}$ and $K_2=\text{L10a110}$.
For each of these two links, one can find in \cite{KAT} a braid on $4$ strands closing to the link. Namely, the braid $\beta_1=\sigma_1^2\sigma_2^{-1}\sigma_3^{-1}\sigma_2^{-1}\sigma_1^3\sigma_2^{-1}\sigma_3^{-1}\sigma_2^{-1}\sigma_1$ admits $K_1$ as its closure, while the braid $\beta_1=\sigma_1^{-1}\sigma_2^{3}\sigma_1^{-1}\sigma_3^{-1}\sigma_2^{3}\sigma_3^{-1}$ admits $K_2$ as its closure. Thus we can use the calculations made in Example \ref{exa1}.

\vskip .2cm
\noindent $\bullet$
We first consider $\mu^0=(1,0)$ or $\mu^0=(0,1)$. Then we have, by definition, $\Gamma^{\gamma}_{Y,\mu^0}(K_i)=\tau_4\circ\Psi_{\mu}\circ\delta^{\gamma}_{Y,4}(\beta_i)$ ($i=1,2$), where $\mu$ is the composition $(4,0)$ or $(0,4)$, and $\tau_4$ comes from the unique Markov trace $\{\tau_n\}_{n\geq1}$ on the Iwahori--Hecke algebras. It is straightforward to see that in this situation, from the formulas in Example \ref{exa1}, we have $\delta^{\gamma}_{Y,4}(\beta_i)=\delta_{H,4}(\beta_i)$, and in turn that we have $\Gamma^{\gamma}_{Y,\mu^0}(K_i)=\Gamma_{\cH}(K_i)$  (the HOMFLYPT polynomial). This is a general property of the invariants $\Gamma^{\gamma}_{Y,\mu^0}$ when $|\mu^0|=1$; see Proposition \ref{prop-comp1} below.

\vskip .2cm
\noindent $\bullet$
Then we consider $\mu^0=(1,1)$.  By definition, we have ($i=1,2$)
\begin{equation}\label{11}
\Gamma^{\gamma}_{Y,\mu^0}(K_i)=\Bigl((\tau_3\otimes\tau_1)\circ\text{Tr}\circ\Psi_{(3,1)}+(\tau_1\otimes\tau_3)\circ\text{Tr}\circ\Psi_{(1,3)}+(\tau_2\otimes\tau_2)\circ\text{Tr}\circ\Psi_{(2,2)}\Bigr)\circ\delta^{\gamma}_{Y,4}(\beta_i),
\end{equation}
where $\text{Tr}$ is the usual trace of a matrix. So we need first to calculate $\Psi_{\mu}\circ\delta^{\gamma}_{Y,4}(\beta_{i})$ for $\mu=(3,1),(1,3),(2,2)$. Take for example $i=1$ and $\mu=(3,1)$. According to Example \ref{exa1}, the generators $g_1$, $g_2$ and $g_3$ map under $\Psi_{(3,1)}\circ\delta^{\gamma}_{Y,4}$ respectively to
\[\left(\begin{array}{cccc}\cdot & u\gamma & \cdot & \cdot\\ u\gamma & \cdot & \cdot & \cdot\\ \cdot & \cdot & T_1 & \cdot\\ \cdot & \cdot & \cdot & T_1\end{array}\right)\,,\ \ \ \ \ 
\left(\begin{array}{cccc}T_1 & \cdot & \cdot & \cdot\\ \cdot & \cdot & u\gamma & \cdot\\ \cdot & u\gamma & \cdot & \cdot\\ \cdot & \cdot & \cdot & T_2\end{array}\right)\ \ \ \ \ \text{and}\ \ \ \ \ 
\left(\begin{array}{cccc}T_2 & \cdot & \cdot & \cdot\\ \cdot & T_2 & \cdot & \cdot\\ \cdot & \cdot & \cdot & u\gamma\\ \cdot & \cdot & u\gamma & \cdot\end{array}\right)\,.\]
Performing the matrix multiplication corresponding to the braid $\beta_1$ given above, we obtain
\[\Psi_{(3,1)}\circ\delta^{\gamma}_{Y,4}(\beta_{1})=\!\left(\begin{array}{cccc}\cdot & \cdot & (u\gamma)^2\, T_1^2T_2^{-1}T_1^{-1}T_2T_1^{-1} & \cdot\\ \cdot & \displaystyle\frac{T_1^2T_2^{-1}T_1^3T_2T_1}{(u\gamma)^{4}} & \cdot & \cdot\\ \cdot & \cdot & \cdot & u\gamma\,T_2^{-1}T_1^3T_2^{-1}\\ (u\gamma)^5\,T_1^{-1}T_2^{-1}T_1^{-1}T_2^{-1}T_1 & \cdot & \cdot & \cdot\end{array}\right)\]
This gives a contribution ({\it i.e.} a term in the sum (\ref{11})) to $\Gamma^{\gamma}_{Y,\mu^0}(K_1)$ equal to $\displaystyle\frac{1}{(u\gamma)^{4}}\tau_3(T_1^2T_2^{-1}T_1^3T_2T_1)$. It is easy to see that the composition $(1,3)$ gives the same contribution to $\Gamma^{\gamma}_{Y,\mu^0}(K_1)$. A similar calculation shows that the composition $(2,2)$ gives a contribution equal to 0 (this can also be deduced without calculation from the fact that the underlying permutation of $\beta_1$ is $(1,2,4)$ and this cycle structure makes impossible for $\Psi_{(2,2)}\circ\delta^{\gamma}_{Y,4}(\beta_1)$ to have a non-zero diagonal term).

A similar procedure for $\beta_2$ shows that the compositions $(3,1)$ and $(1,3)$ give both a contribution to $\Gamma^{\gamma}_{Y,\mu^0}(K_2)$ equal to 0, while the composition $(2,2)$ gives a contribution equal to $\displaystyle\frac{2}{(u\gamma)^{4}}\tau_2(T_1^3)^2$.

Quite remarkably, even though the two calculations involve different compositions and different elements of Iwahori--Hecke algebras, these two calculations lead finally to
\[\Gamma^{\gamma}_{Y,\mu^0}(K_1)=\Gamma^{\gamma}_{Y,\mu^0}(K_2)=\frac{2}{(u\gamma)^{4}}(2u^2-u^4+v^2)^2\ .\]
From \cite{KAT}, we note that these two links $K_1,K_2$ are topologically different and are however not distinguished by the HOMFLYPT polynomial. We just checked that they are not distinguished neither by the invariants $\Gamma^{\gamma}_{Y,\mu^0}$ (when $d=2$) coming from the Yokonuma--Hecke algebras.

In fact, we will prove below in Proposition \ref{prop-comp2} that (for any $d$), the set of invariants $\{\Gamma^{\gamma}_{Y,\mu^0}\}$ is topologically equivalent to the HOMFLYPT polynomial when restricted to classical knots. We note that the question remains open for classical links which are not knots. However, computational data seem to indicate that the invariants are topologically equivalent as well for all classical links (we checked this by direct calculations, as in the example here, for $d=2$ and links up to 10 crossings; see also \cite{chla}).  
\hfill$\triangle$
\end{Exa}

\subsection{Comparison of invariants for classical links}

As already explained, when calculating the invariant for a classical link using the Yokonuma--Hecke algebras, the additional generators $t_j$ play a non-trivial role, and therefore these invariants are \emph{a priori} different from the HOMFLYPT polynomial. In this part, we will compare the set of invariants for classical links obtained from the Yokonuma--Hecke algebras with the HOMFLYPT polynomial. The main question is whether or not they are topologically equivalent.

According to Corollary \ref{coro-inv2}, we can express any invariant for classical links obtained from the algebras $Y_{d,n}$ via the above construction as a linear combination of the invariants denoted $\Gamma^{\gamma}_{Y,\mu^0}$, where $\mu^0\in\Comp^0_d$. The main question is whether we have, for any two classical links $K_1,K_2$,
\[\Bigl(\ \forall\mu^0\in\Comp^0_d\,,\ \ \Gamma^{\gamma}_{Y,\mu^0}(K_1)=\Gamma^{\gamma}_{Y,\mu^0}(K_2)\ \Bigr)
\ \quad\stackrel{\textbf{?}}{\Longleftrightarrow}\quad\ \Gamma_\cH(K_1)=\Gamma_\cH(K_2)\ .\]

In this part, we will show first that the set $\{\Gamma^{\gamma}_{Y,\mu^0}\ |\ \mu^0\in\Comp^0_d\}$ contains $\Gamma_\cH$, so that only one half of the equivalence is not trivial. Secondly, we will show that this equivalence is true whenever we restrict our attention to classical knots. We refer to  \cite{vous,chla} for similar results about Juyumaya--Lambropoulou invariants. Note that these invariants  are shown     in Section \ref{subsec-jula} below to be certain linear combinations of the set   $\{\Gamma^{\gamma}_{Y,\mu^0}\ |\ \mu^0\in\Comp^0_d\}$ for some specific value of $\gamma$.  

\paragraph{The HOMFLYPT polynomial from $Y_{d,n}$.} Among the basic invariants $\Gamma^{\gamma}_{Y,\mu^0}$, we consider in this paragraph the ones for which $|\mu^0|=1$.

\begin{Prop}\label{prop-inv1}
Let $\mu^0\in\Comp_d(1)$. We have, for any classical link $K$,
\[\Gamma^{\gamma}_{Y,\mu^0}(K)=\Gamma_\cH(K)\ .\]
In particular, the set of invariants for classical links obtained from $Y_{d,n}$ contains the HOMFLYPT polynomial.
\end{Prop}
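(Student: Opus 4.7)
The plan is to unpack both sides of the identity and show they coincide generator by generator.

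First I would use the assumption $|\mu^0|=1$: the base $\mu^0\in\Comp_d(1)$ is a single $1$ in some position $a$, with all other parts equal to $0$. Among all $\mu\in\Comp_d(n)$, the condition $[\mu]=\mu^0$ then forces $\mu$ to be the composition $\mu=(0,\dots,0,n,0,\dots,0)$ with the unique $n$ in position $a$. In particular, the sum defining $\rho_{\mu^0,n}$ in (\ref{rho-mu0}) collapses to a single term. For this distinguished $\mu$ we have $m_\mu=1$, $\cH^\mu\simeq\cH_n$, and there is a single character $\chi^\mu_1\in\operatorname{Irr}(\mathcal{A}_n)$ given by $\chi^\mu_1(t_j)=\xi_a$ for every $j$. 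Consequently $E_\mu=E_{\chi^\mu_1}$, $\pi_{1,\mu}=1$, and the permutation action is trivial on $\{\chi^\mu_1\}$.

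The key observation is that $\chi^\mu_1(t_i)=\chi^\mu_1(t_{i+1})=\xi_a$ for all $i=1,\dots,n-1$, so $E_\mu e_i=E_\mu$ and therefore, inside $E_\mu Y_{d,n}$,
\[
E_\mu\cdot\bigl(\gamma+(1-\gamma)e_i\bigr)g_i \;=\; \bigl(\gamma+(1-\gamma)\bigr)E_\mu g_i \;=\; E_\mu g_i.
\]
In other words, the parameter $\gamma$ drops out identically on this component, and $\delta^\gamma_{Y,n}(\sigma_i)$ acts on $E_\mu Y_{d,n}$ exactly as $g_i$. Applying Theorem \ref{main} (with $m_\mu=1$ and $\pi_{1,\mu}=1$), the isomorphism $\Psi_\mu$ restricted to $E_\mu Y_{d,n}$ is explicitly $\Psi_\mu(E_\mu\tg_w)=\tT_w$, hence $\Psi_\mu(E_\mu g_w)=T_w$. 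Iterating over a braid word $\beta_K=\sigma_{i_1}^{\epsilon_1}\cdots\sigma_{i_r}^{\epsilon_r}$ (using that $\Psi_\mu$ is an algebra isomorphism onto $\Mat_1(\cH_n)\simeq\cH_n$, and that $g_i^{\pm1}$ is sent to $T_i^{\pm1}$) gives
\[
\Psi_\mu\bigl(E_\mu\cdot\delta^\gamma_{Y,n}(\beta_K)\bigr) \;=\; \delta_{\cH,n}(\beta_K)\quad\text{in }\cH_n.
\]

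Finally, for this $\mu$ the trace factor in (\ref{rho-mu0}) is simply $\tau_n$ (the only nonzero tensor factor). Composing the identity above with $\tau_n$ yields
\[
\Gamma^\gamma_{Y,\mu^0}(K)\;=\;\tau_n\circ\Psi_\mu\bigl(E_\mu\cdot\delta^\gamma_{Y,n}(\beta_K)\bigr)\;=\;\tau_n\circ\delta_{\cH,n}(\beta_K)\;=\;\Gamma_\cH(K),
\]
which is the desired statement; the ``in particular'' clause is then immediate since $\Comp_d(1)$ is nonempty. The only subtlety to verify carefully is the collapse $E_\mu e_i=E_\mu$, which is where the hypothesis $|\mu^0|=1$ is essential; otherwise I expect no obstacle, as everything else is a direct application of the explicit formulas from Theorem \ref{main} and the definition (\ref{def-delta}) of $\delta^\gamma_{Y,n}$.
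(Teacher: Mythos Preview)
Your proof is correct and follows essentially the same approach as the paper: identify the unique $\mu$ with $[\mu]=\mu^0$, observe that $m_\mu=1$ and $\cH^\mu\cong\cH_n$, use $E_\mu e_i=E_\mu$ to kill the $\gamma$-dependence, and conclude via $\Psi_\mu(E_\mu g_i)=T_i$. The paper's argument is organized identically, with the same key computation $E_\mu\delta^\gamma_{Y,n}(\sigma_i)=E_\mu g_i$ as the crux.
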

\begin{proof}
Let $\mu^0\in\Comp_d(1)$ (so that $\mu^0$ automatically belongs to $\Comp^0_d$). So there exists $a\in\{1,\dots,d\}$ such that $\mu^0=(0,\dots,0,1,0,\dots,0)$ with $1$ in $a$-th position. Note that a composition $\mu$ with $d$ parts satisfies $[\mu]=\mu^0$ if and only if $\mu=(0,\dots,0,n,0,\dots,0)$ with $n$ in $a$-th position for some $n\geq1$. 

So let $n\geq 1$ and $\mu=(0,\dots,0,n,0,\dots,0)$ with $n$ in $a$-th position. In this situation, we have $\cH^{\mu}\cong\cH_n$ and $m_{\mu}=1$. According to Formula (\ref{rho-mu0}),  the linear function $\rho_{\mu^0,n}$ is then given by
\begin{equation}\label{eq-prop-1}
\rho_{\mu^0,n}(x)=\tau_n\circ\Psi_{\mu}(E_{\mu}x)\,,\ \ \ \ \quad\text{for any $x\in Y_{d,n}$.}
\end{equation}
The defining formula (\ref{def-psi}) for the isomorphism $\Psi_{\mu}$ becomes simply $\Psi_{\mu}(E_{\mu}g_w)=T_w$ for $w\in\mS_n$, and in particular, we have
\begin{equation}\label{eq-prop-2}
\Psi_{\mu}(E_{\mu}g_i)=T_i\,,\ \ \ \ \quad\text{for any $i=1,\dots,n-1$.}
\end{equation}
Note that $E_{\mu}e_i=E_{\mu}$, for any $i=1,\dots,n-1$, since, for the considered $\mu$, we have $E_{\mu}t_i=E_{\mu}t_{i+1}$ (both are equals to $\xi_aE_{\mu}$). Therefore
\begin{equation}\label{eq-prop-3}
E_{\mu}\delta^{\gamma}_{Y,n}(\sigma_i)=E_{\mu}\bigl(\gamma+(1-\gamma)e_i\bigr)g_i=E_{\mu}g_i\,,\ \ \ \ \quad\text{for any $i=1,\dots,n-1$.}
\end{equation}
To conclude, let $\beta\in B_n$ be a classical braid. Equations (\ref{eq-prop-2})-(\ref{eq-prop-3}), together with the fact that $\Psi_{\mu}$ is a morphism, yields 
\[\Psi_{\mu}\bigl(\,E_{\mu}\delta^{\gamma}_{Y,n}(\beta)\,\bigr)=\delta_{\cH,n}(\beta)\ ,\]
which gives in turn, using (\ref{eq-prop-1}), that $\rho_{\mu^0,n}\circ\delta^{\gamma}_{Y,n}(\beta)=\tau_n\circ \delta_{\cH,n}(\beta)$. This is the desired result.
\end{proof}

\paragraph{Equivalence of invariants for classical knots.} Let $\beta\in B_n$, for some $n\geq1$, be a classical braid. From the presentation (\ref{def-B}) of $B_n$, there is a surjective morphism from $B_n$ to the symmetric group $\mS_n$ given by $\sigma_i\mapsto s_i=(i,i+1)$ for $i=1,\dots,n-1$. We will denote $\bar{\beta}\in\mS_n$ the image of $\beta$ and refer to it as the underlying permutation of $\beta$.

Now, the necessary and sufficient condition for the closure of $\beta$ to be a knot (that is, a link with only one connected component) is that the underlying permutation $\bar{\beta}$ leaves no non-trivial subset of $\{1,\dots,n\}$ invariant. In other words, the closure of $\beta$ is a knot if and only if $\bar{\beta}$ is a cycle of length $n$.

\begin{Prop}\label{prop-inv2}
For any classical knot $K$ and any $\mu^0\in\Comp^0_d$,
\[\Gamma^{\gamma}_{Y,\mu^0}(K)=\left\{\begin{array}{ll}\Gamma_\cH(K) & \text{if $|\mu^0|=1$,}\\[0.2em]
0 & \text{otherwise\ .}\end{array}\right.\]
In particular, for classical knots, the invariants obtained from $Y_{d,n}$ are topologically equivalent to the HOMFLYPT polynomial.
\end{Prop}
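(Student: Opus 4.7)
Since the case $|\mu^0|=1$ is already contained in Proposition \ref{prop-inv1}, the plan is to establish $\Gamma^\gamma_{Y,\mu^0}(K)=0$ when $|\mu^0|\geq 2$. Fix a braid $\beta\in B_n$ closing to the knot $K$; then the underlying permutation $\bar\beta\in\mS_n$ is an $n$-cycle, and it suffices to show $\rho_{\mu^0,n}\bigl(\delta^\gamma_{Y,n}(\beta)\bigr)=0$.

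The key ingredient is the following \emph{permutation-tracking lemma}: for every $\chi\in\operatorname{Irr}(\mathcal{A}_n)$,
\[
\delta^\gamma_{Y,n}(\beta)\,E_\chi\ \in\ E_{\bar\beta(\chi)}\,Y_{d,n}\,.
\]
I would prove it by induction on the word length of $\beta$. The base case $\beta=\sigma_i^{\pm 1}$ is direct from (\ref{def-delta}) and (\ref{delta-inv}): since $e_i\in\mathcal{A}_n$ commutes with every $E_\eta$ and $g_i^{\pm 1}E_\chi=E_{s_i(\chi)}g_i^{\pm 1}$, one computes
\[
\delta^\gamma_{Y,n}(\sigma_i^{\pm 1})E_\chi\ =\ E_{s_i(\chi)}\bigl(\gamma^{\pm 1}+(1-\gamma^{\pm 1})e_i\bigr)g_i^{\pm 1}.
\]
The inductive step follows from the multiplicativity of $\delta^\gamma_{Y,n}$ by applying the base case to one generator at a time on the left.

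With the lemma in hand, I would decompose $\rho_{\mu^0,n}\bigl(\delta^\gamma_{Y,n}(\beta)\bigr)=\sum_{\chi\in\operatorname{Irr}(\mathcal{A}_n)}\rho_{\mu^0,n}\bigl(\delta^\gamma_{Y,n}(\beta)E_\chi\bigr)$ and observe that each summand equals $\rho_{\mu^0,n}\bigl(E_{\bar\beta(\chi)}\delta^\gamma_{Y,n}(\beta)E_\chi\bigr)$; cycling $E_{\bar\beta(\chi)}$ past by the trace property and using $E_\chi E_{\bar\beta(\chi)}=\delta_{\chi,\bar\beta(\chi)}E_\chi$, only characters $\chi$ fixed by $\bar\beta$ can contribute. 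Since $\bar\beta$ is an $n$-cycle (hence transitive on $\{1,\dots,n\}$), any such $\chi$ is constant on $t_1,\dots,t_n$, so $|[\operatorname{Comp}(\chi)]|=1$; as $|\mu^0|\geq 2$, formula (\ref{rho-mu0}) combined with $E_\mu E_\chi=0$ for $\mu\neq\operatorname{Comp}(\chi)$ forces these remaining summands to vanish as well. The only delicate step is the permutation-tracking lemma, where it is crucial that the $\gamma$-deformation does not disturb the character action; this works precisely because the idempotents $e_i$ lie in $\mathcal{A}_n$. Combined with Proposition \ref{prop-inv1}, this yields topological equivalence with the HOMFLYPT polynomial on classical knots.
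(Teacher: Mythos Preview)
Your proof is correct and follows essentially the same approach as the paper: both arguments reduce to the observation that $\delta^\gamma_{Y,n}(\beta)E_\chi=E_{\bar\beta(\chi)}\delta^\gamma_{Y,n}(\beta)$, then use cyclicity of the trace together with orthogonality of the $E_\chi$ to kill every term except those with $\bar\beta(\chi)=\chi$, and finally note that an $n$-cycle can only fix constant characters, whose compositions have $|[\mu]|=1$. The one small difference is that where you establish the permutation-tracking lemma by induction on word length, the paper obtains the full equality $x_\beta E_\chi=E_{\bar\beta(\chi)}x_\beta$ in one line by remarking that the relation $\beta t_j=t_{\bar\beta(j)}\beta$ already holds in the framed braid group and is preserved by the homomorphism $\delta^\gamma_{Y,n}$ (since $t_j\mapsto t_j$); this bypasses the inductive step and the separate check that $e_i$ commutes with $E_\chi$.
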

\begin{proof}
Let $K$ be a classical knot and $\beta\in B_n$, for some $n\geq1$, a classical braid closing to $K$. To save space during the proof, we set $x_{\beta}:=\delta^{\gamma}_{Y,n}(\beta)\in Y_{d,n}$. 

Let $\mu^0\in\Comp^0_d$ with $|\mu^0|>1$. According to Proposition \ref{prop-inv1}, we only have to prove that $\Gamma^{\gamma}_{Y,\mu^0}(K)=0$ which is equivalent to $\rho_{\mu^0,n}(x_{\beta})=0$. We will actually prove the following stronger statement:
\[\operatorname{Tr}_{\Mat_{m_{\mu}}}\circ\Psi_{\mu}\bigl(E_{\chi^{\mu}_k}x_{\beta}\bigr)=0\,,\ \ \ \ \qquad\text{for any $\mu\in\Comp_d(n)$ such that $[\mu]=\mu^0$, and any $k\in\{1,\dots,m_{\mu}\}$.}\]
The required assertion will then follow from (\ref{rho-mu0}) and the fact that $E_{\mu}x_{\beta}=\sum_{1\leq k\leq m_{\mu}}E_{\chi^{\mu}_k}x_{\beta}$.

\vskip .1cm
We first note that, in the framed braid group $\Z/d\Z\wr B_n$, we have $\beta t_j=t_{\bar{\beta}(j)}\beta$, for $j=1,\dots,n$, due to the fourth relation in (\ref{def-FB}). Therefore, in $Y_{d,n}$, we have $x_{\beta} t_j=t_{\bar{\beta}(j)}x_{\beta}$, for $j=1,\dots,n$, and in turn $x_{\beta} E_{\chi}=E_{\bar{\beta}(\chi)}x_{\beta}$ for any character $\chi$ of $(\Z/d\Z)^n$.

Then let $\mu\in\Comp_d(n)$ such that $[\mu]=\mu^0$ and $k\in\{1,\dots,m_{\mu}\}$. We recall that $\pi\in\mS_n$ satisfies $\pi(\chi^{\mu}_k)=\chi^{\mu}_k$ if and only if $\pi$ belongs to a subgroup of $\mS_n$ conjugated to $\mS^{\mu}$ (namely, to $\pi_{k,\mu}\mS^{\mu}\pi_{k,\mu}^{-1}$ with the notations of Section \ref{sec-def}). By the assumption on $\mu$, we have at least two integers $a,b\in\{1,\dots,d\}$ such that $\mu_a,\mu_b\geq1$, and thus the subgroup $\mS^{\mu}=\mS_{\mu_1}\times\dots\times\mS_{\mu_d}$ contains no cycle of length $n$. This means in particular that $\bar{\beta}^{-1}(\chi^{\mu}_k)\neq\chi^{\mu}_k$ since $\bar{\beta}$ is a cycle of length $n$ as $K$ is a knot.

Finally, we write $E_{\chi^{\mu}_k}x_{\beta}=E_{\chi^{\mu}_k}^2x_{\beta}=E_{\chi^{\mu}_k}x_{\beta}E_{\bar{\beta}^{-1}(\chi^{\mu}_k)}$ and we conclude the proof with the following calculation
\[\operatorname{Tr}_{\Mat_{m_{\mu}}}\circ\Psi_{\mu}(E_{\chi^{\mu}_k}x_{\beta}E_{\bar{\beta}^{-1}(\chi^{\mu}_k)})
=\operatorname{Tr}_{\Mat_{m_{\mu}}}\circ\Psi_{\mu}(E_{\bar{\beta}^{-1}(\chi^{\mu}_k)}E_{\chi^{\mu}_k}x_{\beta})
=0\ ,\]
where we used that $E_{\chi}E_{\chi'}=0$ if $\chi\neq\chi'$.
\end{proof}

\begin{Rem}
Note that in general, for an arbitrary Markov trace $\{\rho_n\}_{n\geq1}$, the invariant $\Gamma^{\gamma}_{Y,\rho}(K)$ is an element of the ring $R=\C[u^{\pm1},v^{\pm1},\gamma^{\pm1}]$. For a classical knot $K$, Proposition \ref{prop-inv2} asserts in particular that every invariant $\Gamma^{\gamma}_{Y,\mu^0}(K)$ ($\mu^0\in\Comp^0_d$) belongs actually to the subring $\C[u^{\pm1},v^{\pm1}]$. Further, for a classical link $K$, Proposition \ref{prop-inv1} asserts in particular that, when $\mu^0\in\Comp_d(1)$, the invariant $\Gamma^{\gamma}_{Y,\mu^0}(K)$ belongs as well to the subring $\C[u^{\pm1},v^{\pm1}]$. So for classical links, the parameter $\gamma$ in fact starts to play a non-trivial role when $K$ is not a knot and $|\mu^0|>1$ (see Example \ref{exa2}).
\end{Rem}

\subsection{Connections with the approach of Juyumaya--Lambropoulou}\label{subsec-jula}

\paragraph{Analogue of the Ocneanu trace and invariants.} Let $q$ be an indeterminate. In \cite{chla,ju2,jula0,jula1,jula2,jula3}, the Yokonuma--Hecke algebra is presented as a certain quotient of the group algebra over $\mathbb{C}[q,q^{-1}]$ of the $\Z/d\Z$-framed braid group $\Z/d\Z\wr B_n$. Namely, there are generators $G_1,G_2,\ldots,G_{n-1}$ and $t_1,\ldots, t_n,$ satisfying the same relations as in (\ref{rel-def-Y}) (with $g_i$ replaced by $G_i$) except the last one, which is replaced by 
\[G_i^2=1+(q-1)e_i+(q-1)e_iG_i\,,\ \ \ \ \ i=1,\dots,n-1\,.\]
To avoid confusion, we will denote this algebra by $\widetilde{Y}_{d,n}$,  and we will give an explicit isomorphism between $\widetilde{Y}_{d,n}$ and $Y_{d,n}$ later. 

\vskip .1cm
Let $z$ be another indeterminate. For convenience, we set $k:=\C(\sqrt{q},z)$. Let $c_1,\dots,c_{d-1}$ be arbitrary elements of $k$ and set $c_0:=1$. In \cite{ju2}, it is proved that there is a unique $k$-linear function $\tr$ on the chain, in $n$, of algebras $k\widetilde{Y}_{d,n}$ with values in $k$ satisfying:
\begin{equation}\label{cond-ju}
\begin{array}{lll}
(\text{C}0) & \tr(1)=1\ ,& \\[0.2em]
(\text{C}1) & \tr(xy)=\tr(yx)\ ,& \text{for any $n\geq1$ and $x,y\in \widetilde{Y}_{d,n}$;}\\[0.2em]
(\text{C}2) & \tr(xG_n)=z\tr(x)\ , & \text{for any $n\geq1$ and $x\in \widetilde{Y}_{d,n}$.}\\[0.2em]
(\text{C}3) & \tr(x t_{n+1}^b)=c_b\tr(x)\ , & \text{for any $n\geq0$, $x\in \widetilde{Y}_{d,n}$ and $b\in\{0,\dots,d-1\}$.}
\end{array}
\end{equation}
Note that here, $\widetilde{Y}_{d,n}$ is identified with a subalgebra of $\widetilde{Y}_{d,n+1}$ for any $n\geq1$.

\vskip .1cm
In \cite{jula3}, it is explained how to obtain isotopy invariants for classical and framed links from the linear function $\tr$ (see also \cite{chla,jula2} for classical and framed links and \cite{jula1} for singular links). This is done as follows.

First we take the parameters $\{c_0,c_1,\dots,c_{d-1}\}$ to be solutions of the so-called E-system \cite[Appendix]{jula2}. To do so, we fix a non-empty subset $S\subset\{1,\dots,d\}$, and we set
\begin{equation}\label{def-cb}
c_b:=\frac{1}{|S|}\sum_{a\in S}\xi_a^b\,,\ \ \ \ \ \ \ \text{for $b=0,1,\dots,d-1$,}
\end{equation}
where we recall that $\{\xi_1,\dots,\xi_d\}$ is the set of $d$-th roots of unity. We denote $\tr_S$ the unique linear function satisfying (\ref{cond-ju}) for the values (\ref{def-cb}) of the parameters $c_0,c_1,\dots,c_{d-1}$, and we set:
\begin{equation}\label{notations}
E_S:=\frac{1}{|S|}\,,\ \ \ \ \ \lambda_S:=\frac{z+(1-q)E_S}{qz}\ \ \ \ \ \text{and}\ \ \ \ \ \ D_S:=\frac{1}{\sqrt{\lambda_S}z}\,,
\end{equation}
where the field $k$ is extended by an element $\sqrt{\lambda_S}$. We denote by $k_S$ this new field.

\vskip .1cm
We let $\widetilde{\delta}^{(S)}_{Y,n}$ be the surjective morphism from $k_S\bigl[\Z/d\Z\wr B_n\bigr]$ to $k_S\widetilde{Y}_{d,n}$, defined by
\begin{equation}\label{def-tdelta}
\widetilde{\delta}^{(S)}_{Y,n}\ :\ \ \ \sigma_i\mapsto \sqrt{\lambda_S}G_i\ \ (i=1,\dots,n-1)\,,\ \ \ \ \ \ \ t_j\mapsto t_j\ \ (j=1,\dots,n)\,.
\end{equation}
The fact that $\widetilde{\delta}^{(S)}_{Y,n}$ defines indeed an algebra morphism follows from the homogeneity of the relations (\ref{def-FB}) in the braid generators $\sigma_i$.

\vskip .1cm
Finally, let $\tilde{K}$ be a $\Z/d\Z$-framed link and $\tilde{\beta}_{\tilde{K}}\in \Z/d\Z\wr B_n$ a $\Z/d\Z$-framed braid on $n$ strands having $\tilde{K}$ as its closure. Then we define the map $\FD_{Y,S}$ from the set of $\Z/d\Z$-framed links to the field $k_S$ by
\begin{equation}\label{tdelta}
\FD_{Y,S}(\tilde{K})=D_S^{n-1}\cdot\tr_S\circ\widetilde{\delta}^{(S)}_{Y,n}(\tilde{\beta}_{\tilde{K}})\ .
\end{equation}
\begin{Th}[Juyumaya--Lambropoulou \cite{jula3}]\label{theo-jula} For any $S\subset\{1,\dots,d\}$, the map $\FD_{Y,S}$ is an isotopy invariant for $\Z/d\Z$-framed links.
\end{Th}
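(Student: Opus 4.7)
The plan is to deduce Theorem \ref{theo-jula} from Theorem \ref{theo-inv} by showing that, after identifying $\widetilde{Y}_{d,n}$ with a specialization of $Y_{d,n}$, the Juyumaya--Lambropoulou invariant $\FD_{Y,S}$ coincides with $\FG^{\gamma_S}_{Y,\rho^{(S)}}$ for a specific value $\gamma_S$ and a specific Markov trace $\rho^{(S)}$ (in the sense of (M1)--(M2)) on the family $\{Y_{d,n}\}_{n\geq 1}$.

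First I would construct an explicit isomorphism $\psi\colon k_S\widetilde{Y}_{d,n}\to k_S Y_{d,n}$, under the specialization from Remark \ref{remY} ($u^2=1$, $v=q-q^{-1}$). Guided by the two quadratic relations, one looks for $\psi(G_i)$ of the form $(\alpha+\beta e_i)\,g_i$ with $\alpha,\beta\in k_S$ adjusted so that $G_i^2=1+(q-1)e_i+(q-1)e_iG_i$ is mapped to the defining relation in $Y_{d,n}$; the generators $t_j$ are sent to $t_j$. Composing with the rescaling built into $\widetilde{\delta}^{(S)}_{Y,n}$, one then identifies $\psi\circ\widetilde{\delta}^{(S)}_{Y,n}$ with $\delta^{\gamma_S}_{Y,n}$ for a specific $\gamma_S\in k_S$ depending only on $S,q,z$. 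This reduces the theorem to showing that the pulled-back family
\[\rho^{(S)}_n\ :=\ D_S^{n-1}\,\tr_S\circ\psi^{-1}\ :\ k_S Y_{d,n}\to k_S\]
is a Markov trace on $\{Y_{d,n}\}_{n\geq 1}$ in our sense.

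The trace condition (M1) is immediate from (C1). The Markov condition (M2) is where the E-system (\ref{def-cb}) and the normalization $D_S=1/(\sqrt{\lambda_S}z)$ enter decisively. Writing $\psi^{-1}(g_n)$ as a linear combination of $G_n$ and $e_nG_n$, the computation of $\rho^{(S)}_{n+1}(xg_n^{\pm 1})$ for $x\in Y_{d,n}$ reduces, by (C1)--(C3), to an identity in $k_S$ involving $z$, $\lambda_S$ and the $c_b$'s. The point is precisely that the E-system values $c_b=\frac{1}{|S|}\sum_{a\in S}\xi_a^b$ produce the idempotent relation $\bigl(\sum_b c_b t_{n+1}^{-b}\bigr)^2=|S|\sum_b c_bt_{n+1}^{-b}$, so that the constant $E_S=1/|S|$ appearing in $\lambda_S$ is exactly what is needed for the rescaled trace to satisfy $\rho^{(S)}_{n+1}(xg_n^{\pm 1})=\rho^{(S)}_n(x)$. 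Once this is verified, Theorem \ref{theo-inv}, applied over the extended ring $k_S$, implies that $\FD_{Y,S}=\rho^{(S)}_n\circ\delta^{\gamma_S}_{Y,n}$ evaluated on any braid word closing to $\tilde{K}$ depends only on the isotopy class of $\tilde{K}$.

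A complementary and more explicit route, which I would carry out in parallel, is to use Theorem \ref{theo-mark} to expand $\rho^{(S)}_n$ in the basis $\{\rho_{\mu^0,n}\,:\,\mu^0\in\Comp^0_d\}$; evaluating both sides on elements of the form $E_{\chi^{\mu}_k}\tg_w$ and applying (C3) with the values (\ref{def-cb}) determines the coefficients in closed form and realizes $\FD_{Y,S}$ as an explicit $k_S$-linear combination of the basic invariants $\FG^{\gamma_S}_{Y,\mu^0}$. The main obstacle is the parameter bookkeeping: pinning down the precise form of $\psi$, the value of $\gamma_S$, and the constants relating $\lambda_S,z,c_b$ so that (M2) and the rescaling $D_S^{n-1}$ fit together. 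Conceptually, our framework clarifies \emph{why} the E-system appears in \cite{jula2,jula3}: it is exactly the condition under which the Juyumaya trace, after the rescaling encoded by $\sqrt{\lambda_S}$ and $D_S$, pulls back through the isomorphism theorem to a genuine Markov trace in the sense of (M1)--(M2).
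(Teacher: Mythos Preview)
The paper does not prove Theorem~\ref{theo-jula}: it is quoted, with attribution, as a result of Juyumaya and Lambropoulou from \cite{jula3}. In fact the paper uses it in the \emph{opposite} direction from what you propose: in the proof of Proposition~\ref{prop-comp1}(i), the authors invoke Theorem~\ref{theo-jula} as an established fact to deduce that $\{\widetilde{\rho}_{S,n}\}_{n\geq1}$ satisfies the Markov condition (M2), rather than verifying (M2) directly from (C1)--(C3) and the E-system and then deducing invariance via Theorem~\ref{theo-inv}.

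Your route is in principle viable and would give a self-contained proof within the paper's framework, but two points deserve care. First, the specialization you invoke from Remark~\ref{remY} ($u^2=1$, $v=q-q^{-1}$) is \emph{not} the one the paper uses to identify $k_S\widetilde{Y}_{d,n}$ with $k_SY_{d,n}$: the change of generators (\ref{def-g})--(\ref{tg-g}) forces $u=\sqrt{q\lambda_S}$ and $v=(q-1)\sqrt{\lambda_S}$ as in (\ref{eq-u-v}), so your parameter bookkeeping should be adjusted accordingly. Second, the direct verification of (M2) from (C1)--(C3) that you sketch requires computing $\tr_S(x\,e_nG_n^{\pm1})$ in terms of $\tr_S(x)$ for $x\in\widetilde{Y}_{d,n}$; this is precisely the computation carried out in \cite{jula2,jula3} using the E-system, so your argument would essentially reprove the Juyumaya--Lambropoulou result rather than bypass it. What your reformulation buys is a conceptual explanation of the E-system as the condition making the rescaled Juyumaya trace a Markov trace in the sense of (M1)--(M2); what the paper's approach buys is economy, since once Theorem~\ref{theo-jula} is granted, Proposition~\ref{prop-comp1} and the decomposition in Proposition~\ref{prop-comp2} follow with almost no computation.
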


\begin{Rem}
As in the previous  section, the invariants $\FD_{Y,S}$ can be restricted to give invariants for classical links \cite{jula2}. 
We denote $\Delta_{Y,S}$ the corresponding invariants for classical links. 
\end{Rem}

\paragraph{Comparison with invariants $\FG^{\gamma}_{Y,\rho}$.}  We keep $S$ a fixed non-empty subset of $\{1,\dots,d\}$, and $c_0,c_1,\dots,c_{d-1}$ the associated solution (\ref{def-cb}) of the E-system. In order to relate the invariant $\FD_{Y,S}$ (respectively, $\Delta_{Y,S}$) to the invariants of the form $\FG^{\gamma}_{Y,\rho}$ (respectively, $\Gamma^{\gamma}_{Y,\rho}$) obtained in Subsection \ref{subsec-inv-F}, we denote
\[\widetilde{\rho}_{S,n}\ :\ k_S\widetilde{Y}_{d,n}\to k_S\,,\ \ \ \ \widetilde{\rho}_{S,n}(x):=D_S^{n-1}\cdot\tr_S(x)\ ,\]
and define new generators by
\begin{equation}\label{def-g}
g_i:=\sqrt{\lambda_S}\bigl(\sqrt{q}+(1-\sqrt{q})e_i\bigr)G_i\,,\ \ \ \ \ \ i=1,\dots,n-1\ .
\end{equation}
Straightforward calculations show first that this change of generators is invertible since 
\begin{equation}\label{tg-g}
G_i=\sqrt{\lambda_S}^{-1}\bigl(\sqrt{q}^{-1}+(1-\sqrt{q}^{-1})e_i\bigr)g_i\,,\ \ \ \ \ \ i=1,\dots,n-1\ ,
\end{equation}
and moreover, that these new generators $g_1,\dots,g_{n-1}$ satisfy all the defining relation in (\ref{rel-def-Y}) of $Y_{d,n}$, where
\begin{equation}\label{eq-u-v}
u:=\sqrt{q\lambda_S}\ \ \ \ \ \ \ \text{and}\ \ \ \ \ \ \ v:=(q-1)\sqrt{\lambda_S}\ ,
\end{equation}
Thus, Formulas (\ref{def-g}) and (\ref{tg-g}) provide mutually inverse isomorphisms between $k_S\widetilde{Y}_{d,n}$ and $k_SY_{d,n}$, and in turn, the linear maps $\widetilde{\rho}_{S,n}$ ($n\geq1$) can be seen, via this isomorphism, as linear maps on $k_SY_{d,n}$.
We note the following formula, which is derived directly from (\ref{eq-u-v}) and (\ref{notations}):
\begin{equation}\label{eq-uv-D}
v^{-1}(1-u^2)=\frac{D_S}{|S|}\ .
\end{equation}

\begin{Prop}\label{prop-comp1}
\begin{enumerate}
\item[\rm{(i)}] The family of linear maps $\{\widetilde{\rho}_{S,n}\}_{n\geq1}$ satisfies Conditions (M1) and (M2) in (\ref{Markov-rho}), and is thus a Markov trace on $\{k_SY_{d,n}\}_{n\geq1}$.
\item[\rm{(ii)}] Moreover, we have
\begin{equation}\label{inv-jula}
\FD_{Y,S}=\FG^{\sqrt{q}^{-1}}_{Y,\tilde{\rho}_S}\ .
\end{equation}
\end{enumerate}
\end{Prop}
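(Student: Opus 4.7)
The plan for (i) is a direct verification of Conditions (M1) and (M2) from (\ref{Markov-rho}), using the axiomatic properties (C0)--(C3) of $\tr_S$ together with the change-of-generators formula (\ref{def-g}). The trace condition (M1) is immediate from (C1). For the Markov condition (M2) with generator $g_n$, I will substitute $g_n = \sqrt{\lambda_S}(\sqrt{q} + (1-\sqrt{q})e_n)G_n$ and use the commutations $e_nG_n = G_ne_n$ and $t_{n+1}^{-s}G_n = G_nt_n^{-s}$ (the latter being a consequence of $G_nt_n = t_{n+1}G_n$). The core computation is to show that both $\tr_S(xG_n)$ and $\tr_S(xe_nG_n)$ equal $z\tr_S(x)$ for every $x \in \widetilde{Y}_{d,n}$: the first equality is just (C2), while the second is obtained by expanding $e_n$ and collapsing each summand of $\frac{1}{d}\sum_s \tr_S(xG_nt_n^st_{n+1}^{-s})$ back to $\tr_S(xG_n)$ via cyclic shifts (using (C1) and the fact that $t_{n+1}$ commutes with $x$). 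Combining with the scalar identity $D_S\sqrt{\lambda_S}z = 1$ from (\ref{notations}) then yields $\widetilde{\rho}_{S,n+1}(xg_n) = \widetilde{\rho}_{S,n}(x)$.

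For $g_n^{-1}$, I will use the inversion formula $g_n^{-1} = u^{-2}(g_n - ve_n)$ valid in $Y_{d,n}$. Expanding $\widetilde{\rho}_{S,n+1}(xg_n^{-1})$ and matching against $\widetilde{\rho}_{S,n}(x)$ via the scalar identity $v^{-1}(1-u^2) = D_SE_S$ recorded in (\ref{eq-uv-D}), the Markov condition for $g_n^{-1}$ reduces to the identity $\tr_S(xe_n) = E_S\tr_S(x)$ for all $x \in \widetilde{Y}_{d,n}$. This is the main obstacle: it is a nontrivial input coming from the specific choice (\ref{def-cb}) of the $c_b$'s as a solution of the E-system. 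On monomials $x = t_1^{k_1}\cdots t_n^{k_n}$, successive applications of (C3) reduce it to the combinatorial identity $\frac{1}{d}\sum_s c_{-s}c_{a+s} = E_S c_a$, which is precisely the E-system condition satisfied by (\ref{def-cb}). For general basis elements $x = t_1^{k_1}\cdots t_n^{k_n}G_w$ with $w \in \mS_n$, the identity is the key ingredient of the Juyumaya--Lambropoulou rescaling procedure underlying Theorem \ref{theo-jula}, and I will cite \cite{jula2} for it rather than re-derive it.

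For part (ii), it suffices to verify that the algebra morphisms $\widetilde{\delta}^{(S)}_{Y,n}$ and $\delta^{\sqrt{q}^{-1}}_{Y,n}$ coincide on generators. Both send $t_j \mapsto t_j$, and substituting (\ref{def-g}) gives
\[\delta^{\sqrt{q}^{-1}}_{Y,n}(\sigma_i) = (\sqrt{q}^{-1}+(1-\sqrt{q}^{-1})e_i)g_i = \sqrt{\lambda_S}(\sqrt{q}^{-1}+(1-\sqrt{q}^{-1})e_i)(\sqrt{q}+(1-\sqrt{q})e_i)G_i = \sqrt{\lambda_S}G_i = \widetilde{\delta}^{(S)}_{Y,n}(\sigma_i),\]
where the penultimate equality uses the elementary identity $(\sqrt{q}^{-1}+(1-\sqrt{q}^{-1})e_i)(\sqrt{q}+(1-\sqrt{q})e_i) = 1$, which is immediate from $e_i^2 = e_i$ (expand and observe that the $e_i$-coefficient collapses to zero). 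Comparing the defining formulas (\ref{tgamma}) and (\ref{tdelta}) of the two invariants then yields $\FD_{Y,S} = \FG^{\sqrt{q}^{-1}}_{Y,\tilde{\rho}_S}$.
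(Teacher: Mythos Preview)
Your proof is correct. Part (ii) is essentially identical to the paper's argument (the paper invokes (\ref{tg-g}) where you use (\ref{def-g}), but this is the same computation).

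For part (i), your approach and the paper's differ in how the E-system input is packaged. The paper invokes Theorem~\ref{theo-jula} (that $\FD_{Y,S}$ is an isotopy invariant) as a black box: since $\FD_{Y,S}$ is invariant under the Markov move $\alpha\sigma_n^{\pm1}\sim\alpha$ and $\widetilde{\delta}^{(S)}_{Y,n}$ is surjective, one reads off directly that $\widetilde{\rho}_{S,n+1}(xG_n^{\pm1})=(\sqrt{\lambda_S})^{\mp1}\widetilde{\rho}_{S,n}(x)$ for all $x$. The transition from $G_n^{\pm1}$ to $g_n^{\pm1}$ then only requires the elementary identity $\widetilde{\rho}_{S,n+1}(xe_nG_n^{\pm1})=\widetilde{\rho}_{S,n+1}(xG_n^{\pm1})$ (the same trace-cycling argument you give), with no further appeal to the E-system. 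Your route instead treats $g_n$ and $g_n^{-1}$ asymmetrically: the $g_n$ case is handled purely from axioms (C1)--(C2), while for $g_n^{-1}$ you expand via $g_n^{-1}=u^{-2}(g_n-ve_n)$ and reduce to the identity $\tr_S(xe_n)=E_S\tr_S(x)$, which you correctly identify as the E-system condition and cite from \cite{jula2}. Your approach has the virtue of isolating precisely which consequence of the E-system is needed (namely the ``E-condition'' $\tr_S(xe_n)=E_S\tr_S(x)$), whereas the paper's approach is slightly quicker since Theorem~\ref{theo-jula} handles both signs uniformly.
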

\begin{proof}
(i) The trace condition (M1) is obviously satisfied by the linear maps $\widetilde{\rho}_{S,n}$. From Theorem \ref{theo-jula}, it follows that the family of linear maps $\{\widetilde{\rho}_{S,n}\}_{n\geq1}$ satisfies
\[\widetilde{\rho}_{S,n+1}(xG_n)=\frac{\widetilde{\rho}_{S,n}(x)}{\sqrt{\lambda_S}}\ \ \ \text{and}\ \ \ \widetilde{\rho}_{S,n+1}(xG_n^{-1})=\sqrt{\lambda_S}\,\widetilde{\rho}_{S,n}(x)\,,\ \ \ \ \ \text{for any $n\geq1$ and $x\in Y_{d,n}$\,.}\]
It follows from Formula (\ref{def-g}) and a short calculation that $g_i^{-1}=\sqrt{\lambda_S}^{-1}\bigl(\sqrt{q}^{-1}+(1-\sqrt{q}^{-1})e_i\bigr)G^{-1}_i$. According to this and to Formula (\ref{def-g}), the Markov condition (M2) will be satisfied if
\[\widetilde{\rho}_{S,n+1}(xe_nG^{\pm1}_n)=\widetilde{\rho}_{S,n+1}(xG^{\pm1}_n)\,,\ \ \ \ \ \text{for any $n\geq1$ and $x\in Y_{d,n}$\,.}\]
The end of the proof of Theorem \ref{theo-inv} item \textbf{1}, from Relation (\ref{eq-proof}), can be repeated here.

\vskip .1cm
(ii) This is immediate in view of (\ref{tdelta}) and (\ref{tg-g}), taking into account the definition (\ref{def-tdelta}) of $\widetilde{\delta}^{(S)}_{Y,n}$.
\end{proof}

At this point, we proved that the invariants $\FD_{Y,S}$ (and thus $\Delta_{Y,S}$ as well) are included in the sets of invariants constructed in this paper. For a given $S$, to identify precisely  to which invariant $\FD_{Y,S}$ corresponds,  in view of (\ref{inv-jula}), it remains to determine the Markov trace $\{\tilde{\rho}_{S,n}\}_{n\geq1}$ in terms of the classification given in Theorem \ref{theo-mark}.

\begin{Prop}\label{prop-comp2}
 Using notations as in Theorem \ref{theo-mark}, the Markov trace $\{\widetilde{\rho}_{S,n}\}_{n\geq1}$ on $\{\tilde{k}Y_{d,n}\}_{n\geq1}$ is given by the following choice of parameters:
 \begin{equation}\label{jula-alpha}
 \alpha_{\mu^0}=\left\{\begin{array}{ll}
 0 & \text{if $\mu^0_a> 0$ for some $a\notin S$,}\\[0.4em]
 \displaystyle\frac{D_S^{|\mu^0|-1}}{|S|^{|\mu^0|}}\ \ \  & \text{otherwise.}
 \end{array}\right.
 \end{equation} 
\end{Prop}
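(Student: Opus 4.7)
The plan is to combine Proposition \ref{prop-comp1}(i), which guarantees that $\{\widetilde{\rho}_{S,n}\}_{n\geq 1}$ is a Markov trace, with the classification of Theorem \ref{theo-mark}, by evaluating $\widetilde{\rho}_{S,n}$ on the central idempotents $E_{\mu^0}$ for $\mu^0\in\Comp^0_d$ and reading off the parameters $\alpha_{\mu^0}$.

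First I would reduce the statement to a trace computation on these idempotents. For $\mu^0\in\Comp^0_d(n)$ every part of $\mu^0$ is $0$ or $1$, so $m_{\mu^0}=n!$ and $\tau_{\mu^0_a}(1)=1$ for every $a$; moreover, directly from (\ref{def-psi}), $\Psi_{\mu^0}(E_{\mu^0})=\sum_{k=1}^{m_{\mu^0}}\M_{k,k}=\mathrm{Id}_{m_{\mu^0}}$, whose matrix trace is $m_{\mu^0}$. Substituting into Theorem \ref{theo-mark} yields $\widetilde{\rho}_{S,n}(E_{\mu^0})=n!\,\alpha_{\mu^0}$. Since $\widetilde{\rho}_{S,n}=D_S^{\,n-1}\tr_S$, proving (\ref{jula-alpha}) reduces to showing that $\tr_S(E_{\mu^0})$ equals $n!/|S|^n$ when $\mu^0_a=0$ for every $a\notin S$ and vanishes otherwise.

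Next, I would compute $\tr_S$ on monomials in the $t_i$'s. An easy induction on $n$, using axioms (C0) and (C3) in (\ref{cond-ju}) together with the commutativity of the $t_j$'s, gives $\tr_S(t_1^{b_1}\cdots t_n^{b_n})=c_{b_1}\cdots c_{b_n}$. Expanding $E_\chi$ via (\ref{E-chi}), applying $\tr_S$ termwise, and substituting $c_b=\tfrac{1}{|S|}\sum_{a\in S}\xi_a^b$ exhibits $\tr_S(E_\chi)$ as a product over $i$ of sums of the form $\tfrac{1}{|S|}\sum_{a\in S}\bigl(\tfrac{1}{d}\sum_s\chi(t_i)^s\xi_a^{-s}\bigr)$. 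The orthogonality identity $\tfrac{1}{d}\sum_s\xi_a^s\xi_b^{-s}=\delta_{a,b}$ recalled in the Notations collapses each inner sum to $\delta_{\chi(t_i),\xi_a}$, yielding $\tr_S(E_\chi)=|S|^{-n}$ if $\chi(t_i)\in\{\xi_a\,:\,a\in S\}$ for every $i$, and $0$ otherwise.

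To finish, I would sum over $\chi\in\mathcal{O}(\mu^0)$. For such a $\chi$ the multiset $\{\chi(t_1),\dots,\chi(t_n)\}$ lists each $\xi_a$ with multiplicity $\mu^0_a$, so $\tr_S(E_\chi)\neq 0$ if and only if $\mu^0_a=0$ for every $a\notin S$; when this condition holds each of the $|\mathcal{O}(\mu^0)|=n!$ characters contributes $|S|^{-n}$, giving $\tr_S(E_{\mu^0})=n!/|S|^n$. Combined with the first reduction, this is exactly (\ref{jula-alpha}). The only non-routine ingredient is the factorised computation of $\tr_S(E_\chi)$ that delivers the vanishing condition; everything else amounts to bookkeeping with the identifications set up in Proposition \ref{prop-comp1}.
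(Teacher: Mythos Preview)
Your argument is correct, and it takes a genuinely different route from the paper's proof. The paper proceeds recursively: it first determines the parameters $\alpha_a$ for the single-box compositions $\mu^0\in\Comp_d(1)$ by writing $t_1^b=\sum_a\xi_a^bE_{\chi_a}$, equating $\widetilde{\rho}_{S,1}(t_1^b)=c_b$, and inverting the resulting Vandermonde system; then it uses condition (C3) for general $n$ on the elements $E_{\chi_1^{\mu}}t_{n+1}^b$ to derive the recursion $\alpha_{[\mu^{[a]}]}=D_S\,\alpha_a\,\alpha_{[\mu]}$ (for $\mu_a=0$), from which the closed formula follows. You instead evaluate $\widetilde{\rho}_{S,|\mu^0|}(E_{\mu^0})$ in one shot for every $\mu^0\in\Comp_d^0$: the factorised identity $\tr_S(t_1^{b_1}\cdots t_n^{b_n})=c_{b_1}\cdots c_{b_n}$ combined with orthogonality collapses $\tr_S(E_\chi)$ to $|S|^{-n}$ or $0$, and summing over the orbit gives the answer directly. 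Your approach is shorter and makes the support condition ``$\mu^0_a=0$ for $a\notin S$'' emerge transparently from the orthogonality relation, while the paper's recursion highlights instead how the multiplicative structure $\alpha_{\mu^0}=D_S^{|\mu^0|-1}\prod_a\alpha_a^{\mu^0_a}$ is forced by (C3). One small point worth making explicit for safety: the isomorphism (\ref{def-g})--(\ref{tg-g}) between $\widetilde{Y}_{d,n}$ and $Y_{d,n}$ fixes the generators $t_j$, so the idempotents $E_\chi$ and $E_{\mu^0}$ are literally the same on both sides and your computation of $\tr_S$ transfers unchanged.
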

\begin{proof} Let $\{\alpha_{\mu^0}\,,\ \mu^0\in\Comp^0_d\}$ be the set of parameters, which is to be determined, corresponding to $\{\widetilde{\rho}_{S,n}\}_{n\geq1}$. We recall that, from the classification result, the associated traces $\widetilde{\rho}_S^{\mu}$ are of the form
\[\widetilde{\rho}_S^{\mu}=\alpha_{[\mu]}\cdot\tau_{\mu_1}\otimes\dots\otimes \tau_{\mu_d}\,,\ \ \ \ \ \text{for any $\mu\in\Comp_d(n)$.}\]
For $a\in\{1,\dots,d\}$, we denote by $\alpha_a$ (respectively, $\chi_a$) the parameter (respectively, the character) corresponding to the composition $(0,\dots,0,1,0,\dots,0)$ with $1$ in $a$-th position. 

First, Condition (C3) in (\ref{cond-ju})  for $n=0$ gives
\[\widetilde{\rho}_{S,1}(t_1^b)=c_b\ ,\ \ \ \ \ \ \ \ b=0,\dots,d-1\ .\]
On the other hand, we write $t_1^b=\sum_{1\leq a \leq d}E_{\chi_a}\xi_a^b$, and we obtain
\[\widetilde{\rho}_{S,1}(t_1^b)=\sum_{1\leq a \leq d}\xi_a^b\alpha_a=c_b\ ,\ \ \ \ \ \ \ \ b=0,\dots,d-1\ .\]
Inverting the Vandermonde matrix of size $d$ with coefficients $\xi^{i-1}_j$ in row $i$ and column $j$, this yields:
\begin{equation}\label{alpha-a}\alpha_a=\frac{1}{d}\sum_{0\leq b\leq d-1}\xi^{-b}_a c_b\ ,\ \ \ \ \ \ \ \ a=1,\dots,d\ .
\end{equation}
Taking into account now the values of $c_b$ in (\ref{def-cb}) corresponding to $S$, we obtain Formula (\ref{jula-alpha}) when $|\mu^0|=1$.

Let $n>0$. Condition (C3) in (\ref{cond-ju}) now gives
\[\widetilde{\rho}_{S,n+1}(xt_{n+1}^b)=D_Sc_b\widetilde{\rho}_{S,n}(x)\ ,\ \ \ \ \ \ \ \ x\in Y_{d,n},\ \ b=0,\dots,d-1\ .\]
Let $\mu\in\Comp_d(n)$ and let $\chi_1^{\mu}$ be the character of $(\Z/d\Z)^n$ defined in (\ref{defc}). We then have, by construction,
\[\widetilde{\rho}_{S,n}(E_{\chi_1^{\mu}})=\widetilde{\rho}_S^{\mu}(1)\ ,\]
while, on the other hand, writing $E_{\chi_1^{\mu}}t_{n+1}^b=\sum_{1\leq a \leq d}\xi_a^bE_{\chi_1^{\mu^{[a]}}}$, we have
\[\widetilde{\rho}_{S,n+1}(E_{\chi_1^{\mu}}t_{n+1}^b)=\sum_{1\leq a \leq d}\xi_a^b\widetilde{\rho}_S^{\mu^{[a]}}(1)\ ,\ \ \ \ \ \ \ \ \ b=0,\dots,d-1\ .\]
We conclude that, for any $\mu\in\Comp_d(n)$ and $b=0,\dots,d-1$, we have
\[\sum_{1\leq a \leq d}\xi_a^b\alpha_{[\mu^{[a]}]}\cdot\tau_{\mu_1}(1)\dots\tau_{\mu_a+1}(1)\dots \tau_{\mu_d}(1)=D_S\,c_b\,\alpha_{[\mu]}\cdot\tau_{\mu_1}(1)\dots\tau_{\mu_a}(1)\dots \tau_{\mu_d}(1)\ .\]
Inverting the same matrix as above, and using the already obtained formula (\ref{alpha-a}), we conclude that
\[\alpha_{[\mu^{[a]}]}\cdot\tau_{\mu_1}(1)\dots\tau_{\mu_a+1}(1)\dots \tau_{\mu_d}(1)=D_S\alpha_a\alpha_{[\mu]}\cdot\tau_{\mu_1}(1)\dots\tau_{\mu_a}(1)\dots \tau_{\mu_d}(1)\,,\ \ \ \ \ \ \ \ a=1,\dots,d\,.\]
Now when $\mu_a=0$, this yields $\alpha_{[\mu^{[a]}]}=D_S\alpha_a\alpha_{[\mu]}$, which is what is needed to conclude the proof.
\end{proof}

\begin{Rem}\label{rem-traces2}
Following Remark \ref{rem-traces} after the proof of Theorem \ref{theo-mark}, we notice that the associated traces corresponding to $\{\widetilde{\rho}_{S,n}\}_{n\geq1}$ are given, for $\mu\in\Comp_d(n)$, by
\[\widetilde{\rho}^{\mu}_S=\left\{\begin{array}{ll}
 0 & \text{if $\mu_a> 0$ for some $a\notin S$,}\\[0.4em]
 \displaystyle\frac{1}{|S|}\cdot\tau_n\ \ \  & \text{otherwise.}
 \end{array}\right.\]
where $\tau_n$ acts on $\cH^{\mu}$ by restriction from $\cH_n$. This follows directly from Proposition \ref{prop-comp2}  and (\ref{eq-uv-D}).\hfill$\triangle$
\end{Rem}

\begin{Rem} Proposition \ref{prop-comp2} gives the explicit decomposition of the Markov trace $\{\widetilde{\rho}_{S,n}\}_{n\geq1}$ in the basis
$\left\{\,\{\rho_{\mu^0,n}\}_{n\geq1}\,\ |\ \ \mu^0\in\Comp^0_d\ \right\}$ and in turn, together with Proposition \ref{prop-comp1}, relates explicitly the invariant $\FD_{Y,S}$ with the invariants obtained in this paper. Concretely, we have:
\[\FD_{Y,S}= \sum_{\mu^0\in\Comp_d^0}\alpha_{\mu^0}\FG^{\sqrt{q}^{-1}}_{Y,\mu^0}\ ,\]
where the coefficients $\alpha_{\mu^0}$ are given by (\ref{jula-alpha}), and the variables $u$ and $v$ are expressed in terms of variables $q$ and $\lambda_S$ according to (\ref{eq-u-v}).\hfill$\triangle$
\end{Rem}

\vspace{1cm}
\noindent {\bf Addresses}\\

\noindent \textsc{Nicolas Jacon}, Universit\'e de Reims Champagne-Ardenne, UFR Sciences exactes et naturelles, Laboratoire de Math\'ematiques EA 4535
Moulin de la Housse BP 1039, 51100 Reims, FRANCE\\  \emph{nicolas.jacon@univ-reims.fr}\\

\noindent \textsc{Lo\"\i c Poulain d'Andecy }, Universit\'e de Reims Champagne-Ardenne, UFR Sciences exactes et naturelles, Laboratoire de Math\'ematiques EA 4535
Moulin de la Housse BP 1039, 51100 Reims, FRANCE\\  \emph{loic.poulain-dandecy@univ-reims.fr}

\end{document}